\documentclass{amsart}
\usepackage[
  left=1.0in,    
  right=1.0in,
  top=1.0in,
  bottom=1.0in,
  includefoot    
]{geometry}
\usepackage{amsthm,amsfonts,amsmath,amssymb}
\usepackage{thmtools}
\usepackage[colorlinks=true]{hyperref}
\usepackage[nobysame]{amsrefs} 
\usepackage{cleveref}
\usepackage{tikz-cd}
\usepackage{tikz}
\usetikzlibrary{cd}
\usetikzlibrary[arrows.meta]
\usetikzlibrary{decorations.markings}
\tikzset{degil/.style={
            decoration={markings,
            mark= at position 0.5 with {
                  \node[transform shape] (tempnode) {$\backslash$};
                  }
              },
              postaction={decorate}
}
}

\usepackage{lineno}

\DeclareMathOperator{\ann}{ann}
\DeclareMathOperator{\lann}{\ell.ann}
\DeclareMathOperator{\rann}{r.ann}

\DeclareMathOperator{\morp}{End}
\DeclareMathOperator{\Hom}{Hom}
\newcommand{\homo}{\Hom}
\renewcommand{\hom}{\Hom}
\DeclareMathOperator{\id}{Id}

\DeclareMathOperator{\soc}{soc}

\DeclareMathOperator{\Aut}{Aut}

\DeclareMathOperator{\gldim}{gl.dim}
\DeclareMathOperator{\projdim}{proj.dim}
\DeclareMathOperator{\injdim}{inj.dim}
\DeclareMathOperator{\Ext}{Ext}
\DeclareMathOperator{\Tor}{Tor}

\theoremstyle{plain}
\newtheorem{theorem}{Theorem}[section] 

\newtheorem{lemma}[theorem]{Lemma}
\newtheorem{proposition}[theorem]{Proposition}
\newtheorem{corollary}[theorem]{Corollary}
\newtheorem{question}[theorem]{Question}

\theoremstyle{definition}
\newtheorem{definition}[theorem]{Definition}
\declaretheorem[
  style=definition,
  name=Example,
  sibling=theorem,
  qed=$\diamondsuit$ 
]{example}

\theoremstyle{remark}

\declaretheorem[
  style=remark,
  name=Remark,
  sibling=theorem,
  qed=$\diamondsuit$ 
]{remark}

\declaretheorem[
  style=remark,
  name=Note,
  sibling=theorem,
  qed=$\diamondsuit$ 
]{note}

\begin{document}


\pagenumbering{arabic}

\title[$p$-quasi-Frobenius Rings]{Endochain Conditions on Modules and $p$-Quasi-Frobenius Rings}

\author[Gera]{Theophilus Gera} 
\address{(Gera) Department of Mathematics, Sardar Vallabhbhai National Institute of Technology, Surat, Gujarat, India-395007}
\email{geratheophilus@gmail.com}
\thanks{The first author would like to thank the JNCASR Summer Research Fellowship Programme 2023 and the Department of Education, Government of India, for the financial assistance.}

\author[Sharma]{Amit Sharma} 
\address{(Sharma) Department of Mathematics, Sardar Vallabhbhai National Institute of Technology, Surat, Gujarat, India-395007}
\email{amitsharma@amhd.svnit.ac.in}

\author[Patel]{Manoj Kumar Patel} 
\address{(Patel) Department of Science and Humanities, National Institute of Technology Nagaland, Dimapur, Nagaland, India-797103}
\email{mkpitb@gmail.com}

\author[Kanwar]{Pramod Kanwar} 
\address{(Kanwar) Department of Mathematics, Ohio University, Zanesville, OH 43701 USA}
\email{kanwar@ohio.edu}

\date{\today}

\begin{abstract}
    Analogous to quasi-Frobenius rings, dual rings, and Johns rings, the classes of $p$-quasi-Frobenius rings, $p$-dual rings, $p$-Johns rings are introduced and used to discuss and settle principal analogues of the Faith–Menal, Faith, and FGF conjectures. A chain symmetry theorem establishes that for quasi-injective semi-projective modules with semiprime endomorphism ring, endonoetherianity and endoartinianity are equivalent and force the endomorphism ring to be semisimple artinian. Within this unified setting, it is shown that (1) two-sided \(p\)-Johns rings are \(p\)-quasi-Frobenius, giving a principal Faith–Menal analogue, with a counterexample showing that the one-sided hypothesis is insufficient; (2) semiprimary principal right ideal rings that are right injective and right Kasch are \(p\)-quasi-Frobenius—this affirmatively answers a question of Faith (Q2) in the principal two-sided ideal ring case and settles the principal Faith conjecture, with a counterexample showing the principal right ideal ring to be indispensable; and (3) right \(p\)-Johns rings that are left CF and left perfect are \(p\)-quasi-Frobenius, yielding a principal FGF analogue, with a counterexample demonstrating that the hypotheses are essential. Wherever needed, examples are provided to show that various classes introduced are different from the existing classes in the literature.
\end{abstract}

\subjclass[2020]{Primary: 16L60, 16D50; Secondary: 13D07, 16P20, 16L30, 16P40}

\keywords{$p$-quasi-Frobenius rings; $p$-Johns rings; Faith's conjecture; Faith–Menal conjecture; FGF conjecture}

\maketitle

\section{Introduction}

Quasi-Frobenius rings, introduced by Nakayama as ring-theoretic analogues of Frobenius algebras, form an important class of rings that occupy a prominent position in ring theory. Due to their classical finiteness as well as homological properties, quasi-Frobenius rings have drawn the attention of researchers working in ring and module theory as well as those working in applied areas such as coding theory. The special attention given by researchers to three celebrated open problems related to quasi-Frobenius rings, namely, Faith's conjecture, the Faith-Menal conjecture, and the FGF conjecture, attests to the depth of the theory. Despite substantial partial results in the literature by Ara et al. \cite{ara2000look}, Haim et al. \cite{haim2012two}, Iovanov \cite{iovanov2015semiprimary}, Koike and Oshiro \cite{koike2016recent}, Pardo and Asensio \citelist{\cite{pardo1997essential} \cite{pardo1997embeddings}}, among others, these conjectures remain open in full generality. They share a common structural question: under what conditions does one-sided injectivity or one-sided annihilator force the full quasi-Frobenius symmetry? (see \cite{faith2002when} for a detailed survey). \\

Over the years researchers have also introduced generalizations of injectivity, such as almost injectivity, weakly injectivity, etc. by requiring different extension conditions on homomorphisms and also generalizations by focusing on certain particular classes of ideals instead of considering the lattice of all ideals. Nicholson and Yousif (\cite{nicholson1995principally}) introduced the notion of principally injective rings (referred to as \emph{p}-injective rings for simplicity and consistency) by considering the class of principal ideals. Researchers have also explored chain conditions based on principal ideals. It is, therefore, natural to explore the impact of restricting the focus to the class principal ideals on the theory of quasi-Frobenius rings. \\

In this article, we continue this direction of research by introducing a unified framework, based on \emph{endochain conditions} and \emph{$p$-injectivity}. Our approach replaces two classical hypotheses simultaneously: artinianity (noetherianity) is weakened to endoartinianity (resp. endonoetherianity) and self-injectivity is weakened to $p$-injectivity. We introduce the notion of \emph{$p$-quasi-Frobenius rings} (see Definition \ref{def:p-quasi-Frobenius}) and \emph{right $p$-Johns rings} (\emph{left $p$-Johns rings}) (see Definition~\ref{def:p-Johns}) to provide this unified framework. Within this setting we study the principal analogous of the three classical conjectures. These analogous are obtained by replacing the classical hypotheses with their principal and endomorphism-theoretic counterparts, and asking whether the resulting structure is \( p \)-quasi-Frobenius. Here the prefix ``\( p \)'' encodes the restriction to principal ideals.\\

The choice of these definitions is not arbitrary but is forced by the desire to isolate the minimal hypotheses under which the quasi-Frobenius theory remains tractable. Replacing artinianity by endoartinianity is natural because endoartinianity is the weakest chain condition that, together with semi-projectivity, still guarantees perfectness of the endomorphism ring (see Theorem~\ref{thm:endoartinian+semi-projective=>perfect}) and interacts meaningfully with semi-projectivity. Similarly, replacing self-injectivity by \( p \)-injectivity is natural because \( p \)-injectivity is the weakest injectivity condition that still yields the double annihilator identity for principal ideals, which is precisely the level at which the classical duality theory operates.

\begin{enumerate}

    \item \textbf{Principal analogue of Faith–Menal conjecture:} \emph{Is every strongly right \( p \)-Johns ring \( p \)-quasi-Frobenius?}
    
    \noindent We show that two-sided \( p \)-Johns rings are \( p \)-quasi-Frobenius (see Theorem~\ref{thm:pJohns-pQF}). In the principal two-sided ideal setting, this coincides with the classical quasi-Frobenius property (see Corollary~\ref{cor:PRIR-QF-pQF-pJohns}). We also show that one-sided \( p \)-Johns rings are insufficient and that strong \( p \)-duality alone does not force \( p \)-quasi-Frobenius (see Example \ref{ex:strongly-pdual-not-pQF}). Consequently, because one-sided \(p\)-Johns conditions are not sufficient for \(p\)-quasi-Frobenius, there is no advantage in strengthening it to strongly \(p\)-Johns condition for this implication.

    \item \textbf{Principal analogue of Faith conjecture:} \emph{Is every semiprimary right $p$-injective ring \( p \)-quasi-Frobenius?}
    
    \noindent We show that this analogue fails in full generality, that is, semiprimary right $p$-injective rings need not be \( p \)-quasi-Frobenius (see Example~\ref{ex:semiprimary_PRIR}). However, under the additional hypotheses that \( R \) is a principal right ideal ring and right Kasch, we prove that \( R \) is \( p \)-quasi-Frobenius. If \( R \) is also a principal left ideal ring, then \( R \) is quasi-Frobenius (see Theorem~\ref{thm:semiprimary_PRIR}). This affirmatively answers a question of Faith (Q2) in the two-sided principal ideal ring case (see Corollary~\ref{cor:pir_kasch_qf}) and provides a complete resolution of the principal analogue.

    \item \textbf{Principal analogue of FGF conjecture:} \emph{Is every right CF ring \( p \)-quasi-Frobenius?}
    
    \noindent We prove that right \( p \)-Johns rings that are left CF and left perfect are \( p \)-quasi-Frobenius (see Theorem~\ref{thm:p-QF}). More generally, we prove that right CF rings satisfying coherence, AIP, and pure-extending are right perfect (see Theorem~\ref{thm:CF-PCS}). Boundary examples show that left CF alone is insufficient (see Note \ref{note:leftCF_not_p-QF}), and that \( p \)-quasi-Frobenius rings need not be CF (see Remark~\ref{rem:non-CF}). 
\end{enumerate}

We also provide an example of a $p$-quasi-Frobenius ring which is not quasi-Frobenius showing that the class of $p$-quasi-Frobenius rings is strictly larger than that of quasi-Frobenius rings. We use our example to explore how far this new class is from the class of quasi-Frobenius rings and discuss Ext and Tor groups associated with our example. Among other things, we show that classical annihilator identities do not transfer verbatim to the endomorphism-theoretic setting. For example, the Ikeda--Nakayama lemma yields equality $\lann(A \cap B) = \lann(A) + \lann(B)$ for ideals in self-injective rings; we show that for images of endomorphisms the corresponding annihilator relations hold only up to inclusion (see Proposition~\ref{prop:annihilator-endomorphisms-revised}), revealing a fundamental asymmetry between ideal-theoretic and endomorphism-theoretic behavior that necessitates new techniques. For semi-projective modules, it is shown that endoartinianity alone forces right perfectness of the endomorphism ring (see Theorem~\ref{thm:endoartinian+semi-projective=>perfect}) and the dual statement holds for quasi-injective modules under endonoetherianity (see Theorem~\ref{thm:quasi-injective+endonoetherian=>perfect}). Under the additional hypothesis that the endomorphism ring is semiprime, ascending and descending endochain conditions become equivalent, and both force semisimple artinianity (see Corollary~\ref{cor:chain-symmetry}). We also discuss Morita invariance and establish that endonoetherianity is preserved under Morita equivalence precisely when the principal annihilators correspond under the equivalence functor (see Theorem~\ref{thm:morita-princ-ann-acc}).

\section{Notation and Preliminaries}

Throughout, unless otherwise stated, all rings are associative with unity and all modules are right modules. For a ring \(R\), \(J(R)\) denotes the Jacobson radical of \(R\), and \(\soc(R_R)\) (\(\soc({}_RR)\)) denotes the right (resp. left) socle of \(R\). For a right \(R\)-module \(M\), we write \(S=\morp_R(M)\) for its endomorphism ring. For a subset \(X\subseteq R\), \(\lann(X)\) and \(\rann(X)\) denote the left and right annihilators of \(X\), respectively. If \(X=\{a\}\), we abbreviate \(\lann(\{a\})\) and \(\rann(\{a\})\) by \(\lann(a)\) and \(\rann(a)\), respectively. The field with two elements (that is, $\mathbb{F}_2$) is denoted by \(\Bbbk\).

A right \(R\)-module \(M\) is \emph{endonoetherian} (\emph{endoartinian}) if the kernels (resp. images) of endomorphisms satisfy the ascending (resp. descending) chain condition. These conditions were introduced by Kaidi and Campos \cite{kaidimodules} and further developed in \citelist{\cite{gouaid2020endo-noetherian} \cite{gera2026endoartinian}}; they refine classical finiteness by shifting attention from the lattice of submodules to the internal structure of the endomorphism ring. For the regular module \(R_R\), endonoetherianity is equivalent to the ascending chain condition on principal right annihilators, and right endoartinianity is equivalent to left perfectness.

A ring \(R\) is \emph{quasi-Frobenius} if it is right noetherian and right self-injective, equivalently, if it is two-sided artinian and two-sided self-injective (see \cite{lam1999lectures}*{Theorem 15.1}).  A right noetherian ring in which every right ideal is a annihilator ideal is called a right \emph{Johns} ring. A right Johns ring $R$ is a \emph{strongly right Johns} ring if every matrix ring $M_n(R)$ is also a right Johns ring. A ring \(R\) is right \emph{CF} (\emph{FGF}) if every cyclic (resp. finitely generated) right \(R\)-module embeds in a free module. A right FGF ring is right CF; the converse is not true (see \cite{nicholson2003quasi}*{Example 7.3}). A ring \(R\) is right \emph{AIP} if the right annihilator of every ideal of \(R\) is pure as a right ideal. Equivalently, \(R\) is right AIP if \(R/\rann(I)\) is flat for every ideal \(I\) of \(R\). $R$ is right \emph{Kasch} if every simple right $R$-module is isomorphic to a right ideal of $R$, that is, every simple right $R$-module embeds in the right module $R_R$. $R$ is a right Kasch ring if and only if every maximal ideal of $R$ has a non-zero left annihilator. A right \(R\)-module \(M\) is called \emph{reflexive} if the natural evaluation map $M \to \hom_R(\hom_R(M, R), R)$ is an isomorphism. A right $R$-module $M$ is \emph{finitely presented} if there exists an exact sequence $0 \to R^m \to R^n \to M \to 0$ with $m, n \in \mathbb{N}$. A right $R$-module $M$ is \emph{coherent} if every finitely generated submodule of $M$ is finitely presented. $R$ is coherent if $R_R$ is coherent.

Three of the classical problems on quasi-Frobenius rings, due to Faith, are:

\begin{itemize}
    \item \emph{Faith's conjecture}: every semiprimary right self-injective ring is quasi-Frobenius \cite{faith1976algebra}.
    \item \emph{Faith-Menal conjecture}: every strongly right Johns ring is quasi-Frobenius \cite{faith1994structure}.
    \item \emph{FGF (CF) conjecture}: every right FGF (CF) ring is quasi-Frobenius (resp. artinian) \cite{faith1982embedding}.
\end{itemize}

A right \(R\)-module \(M\) is called \emph{principally injective} if for every \(a\in R\), every \(R\)-homomorphism \(f:aR\to M\) extends to an \(R\)-homomorphism \(R_R\to M\). A ring \(R\) is principally injective  if \(R\) is principally injective as a right module over itself. Principally injective modules are also called divisible modules in the literature (see \cite{lam1999lectures}*{Proposition 3.17}). Henceforth, for simplicity and consistency, we use \emph{$p$-injective} for principally injective. A module \(M\) is \emph{quasi-injective} if every homomorphism from a submodule of \(M\) into \(M\) extends to an endomorphism of \(M\). $M$ is \emph{pure extending} if every pure submodule is essential in a direct summand. $M$ is \emph{semi-projective} if $Sf=\homo_R(M,f(M))$ holds for all $f\in S$. A ring $R$ is \emph{right coherent} if every finitely generated submodule of a finitely presented module is finitely presented.

For all other notation and definitions, we refer the reader to \citelist{\cite{lam1999lectures} \cite{lam2001firstcourse} \cite{wisbauer1991foundations}}.

\section{Endochain Conditions on Semi-projective and Quasi-injective Modules} \label{section2}

Recall that a right $R$-module $M$ is called \emph{endonoetherian} (resp. \emph{endoartinian}) if the kernels (resp. images) of endomorphisms of $M$ satisfy the ascending (resp. descending) chain condition. The following characterization, due to Kaidi and Campos
\cite{kaidimodules}, links endonoetherian and endoartinian properties
of a ring to chain conditions on annihilators ideals and serves as
a basis for the structural results developed in this section.
 
\begin{proposition}[\cite{kaidimodules}*{Proposition 1.16}] \label{prop:endo_noether_artin} 
Let \(R\) be a ring. Then the following statements hold.
\begin{enumerate}
    \item \(R\) is right endonoetherian if and only if it satisfies the
          ascending chain condition (ACC) on principal right
          annihilators, that is, for every ascending chain $\rann(a_1) \subseteq \rann(a_2) \subseteq \cdots,$ there exists a positive integer $n_0$ such that $\rann(a_n) = \rann(a_{n+1})$ for all $n \ge n_0$.
 
    \item \(R\) is right endoartinian if and only if it satisfies the
          descending chain condition (DCC) on principal right
          ideals, equivalently, if and only if \(R\) is left perfect.
\end{enumerate}
\end{proposition}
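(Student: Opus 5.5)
The plan is to identify $\morp_R(R_R)$ with $R$ acting by left multiplication, so that every endomorphism of $R_R$ is $\lambda_a\colon x\mapsto ax$ for a unique $a\in R$. Both parts then become statements about the subsets $\ker\lambda_a$ and $\operatorname{im}\lambda_a$ of $R$.

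For part (1), we have $\ker\lambda_a=\{x: ax=0\}=\rann(a)$. As $a$ ranges over $R$, the kernels of endomorphisms of $R_R$ are therefore exactly the principal right annihilators. The ACC on kernels of endomorphisms is then verbatim the ACC on principal right annihilators. Nothing further is needed beyond checking that every endomorphism of $R_R$ is some $\lambda_a$, which is the standard identification $\morp_R(R_R)\cong R$ via $f\mapsto f(1)$.

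For part (2), we have $\operatorname{im}\lambda_a=aR$. The images of endomorphisms are therefore exactly the principal right ideals, and right endoartinian is the same as DCC on principal right ideals. The equivalence with left perfectness is Bass's theorem: a ring is left perfect if and only if it satisfies DCC on principal right ideals. I would cite this, for example from \cite{lam2001firstcourse}*{Theorem 23.20}, rather than reprove it.

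For completeness I would sketch the harder direction of Bass's theorem, DCC $\Rightarrow$ left perfect. First, DCC on principal right ideals forces $J=J(R)$ to be left T-nilpotent. Given $a_1,a_2,\dots\in J$, the chain $a_1R\supseteq a_1a_2R\supseteq\cdots$ stabilizes at some $n$. Then $a_1\cdots a_n=a_1\cdots a_{n+1}r$, so $a_1\cdots a_n(1-a_{n+1}r)=0$. Since $1-a_{n+1}r$ is a unit, $a_1\cdots a_n=0$.

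Second, $R/J$ inherits DCC on principal right ideals. A semiprimitive ring with this property is semisimple artinian: it has no infinite sets of orthogonal idempotents and every nonzero right ideal contains a minimal one, so the socle is essential and equals the whole ring. Idempotents lift modulo the nil-type ideal $J$, which gives semiperfectness, and together with T-nilpotence this is the definition of left perfect. The main obstacle is precisely this semisimplicity step, which is why I would rely on the cited theorem. The converse direction (left perfect $\Rightarrow$ DCC on principal right ideals) is also part of Bass's theorem and likewise follows from that citation.
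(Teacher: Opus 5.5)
Your proposal is correct and follows the paper's route. The paper only cites Kaidi--Campos for this statement, but the argument it relies on (recorded in Note~\ref{note:lam-connection}) is yours: every endomorphism of $R_R$ is left multiplication by $f(1)$, so the kernels are the $\rann(a)$ and the images are the $aR$, and Bass's Theorem~P supplies the equivalence with left perfectness.

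Your supplementary sketch of Bass's theorem is fine as a sketch, and the left T-nilpotence computation is correct. One small point: idempotent lifting is not needed there, because left perfect means by definition that $R/J$ is semisimple and $J$ is left T-nilpotent.
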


The following relations may be viewed as an analogue of the Ikeda-Nakayama Lemma. The classical lemma establishes, for a self-injective ring $R$, the equality $\lann(A \cap B) = \lann(A) + \lann(B)$ holds for all ideals $A, B$ of $R$ \cite{ikeda1954on}. For arbitrary rings and endomorphisms of the ring, we have the following.
 
\begin{proposition} \label{prop:annihilator-endomorphisms-revised}
For $f, g \in \morp(R_R)$ \textup{(}resp.\ $f, g \in \morp({_R}R)$\textup{)}, the following statements hold.
\begin{enumerate}
  \item For any two right ideals $T, T'$ of $R$,
        \begin{align*}
          g\!\left(\lann(T + T')\right) \subseteq g\!\left(\lann(T)\right) + g\!\left(\lann(T')\right).
        \end{align*}
        Moreover, if $g \in \Aut(R_R)$, then
        $g\!\left(\lann(T + T')\right) = g\!\left(\lann(T)\right) \cap
        g\!\left(\lann(T')\right)$.
 
  \item For any $f \in \morp(R_R)$ and for any finitely generated left ideal $L = \sum_{i=1}^{n} R x_i$ of $R$
        \[
          \sum_{i=1}^{n} \lann\!\left(f(\rann(x_i))\right)
          \subseteq \lann\!\left(f(\rann(L))\right).
        \]
        Moreover, if $\lann\!\left(f(\rann(x_i))\right) = R x_i$ for each
        $i$ \textup{(}point-wise double left annihilator\textup{)}, then
        $L = \sum_{i=1}^{n} R x_i \subseteq \lann\!\left(f(\rann(L))
        \right)$.
 
  \item  For any $g \in \morp(R_R)$ and for any finitely generated right ideal $T = \sum_{j=1}^{m} t_j R$ of $R$
        \[
          \sum_{j=1}^{m} \rann\!\left(g(\lann(t_j))\right)
          \subseteq \rann\!\left(g(\lann(T))\right).
        \]
        Moreover, if $\rann\!\left(g(\lann(t_j))\right) = t_j R$ for each
        $j$ \textup{(}point-wise double right annihilator\textup{)}, then
        $T = \sum_{j=1}^{m} t_j R \subseteq \rann\!\left(g(\lann(T))
        \right)$.
\end{enumerate}
\end{proposition}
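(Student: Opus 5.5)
The whole statement should follow from two elementary facts, with no injectivity hypothesis needed. First, annihilators reverse inclusions: if $X\subseteq Y$ then $\lann(Y)\subseteq\lann(X)$ and $\rann(Y)\subseteq\rann(X)$. Second, the annihilator of a sum is the intersection of the annihilators:
\[
\lann(T+T')=\lann(T)\cap\lann(T'),\qquad \rann\Bigl(\sum_i Rx_i\Bigr)=\bigcap_i\rann(x_i),\qquad \lann\Bigl(\sum_j t_jR\Bigr)=\bigcap_j\lann(t_j).
\]
An endomorphism $g$ (resp.\ $f$) will only be used as a set map that preserves inclusions: $X\subseteq Y$ gives $g(X)\subseteq g(Y)$. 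Concretely, $g\in\morp(R_R)$ is left multiplication $x\mapsto cx$ with $c=g(1)$, and $g\in\morp({_R}R)$ is right multiplication. I do not need this description, but it confirms that $g$ of a left ideal is at least an additive subgroup, so the sums below make sense.

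For (1), I use $\lann(T+T')=\lann(T)\cap\lann(T')\subseteq\lann(T)$, which gives
\[
g(\lann(T+T'))\subseteq g(\lann(T))\subseteq g(\lann(T))+g(\lann(T')).
\]
For the automorphism clause, $g$ is a bijection, so $g(A\cap B)=g(A)\cap g(B)$ for all subsets $A,B$. Applying this with $A=\lann(T)$ and $B=\lann(T')$ gives the stated equality.

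For (2), I use $\rann(L)=\bigcap_i\rann(x_i)\subseteq\rann(x_i)$ for each $i$. Applying $f$ gives $f(\rann(L))\subseteq f(\rann(x_i))$. Taking left annihilators reverses this to $\lann(f(\rann(x_i)))\subseteq\lann(f(\rann(L)))$ for every $i$. The right-hand side is a left ideal, since it is the left annihilator of a subset, so it contains the sum of the left ideals $\lann(f(\rann(x_i)))$. The ``moreover'' part is then immediate: substitute $Rx_i$ for each summand.

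Part (3) is the mirror argument. I use $\lann(T)=\bigcap_j\lann(t_j)$, so $g(\lann(T))\subseteq g(\lann(t_j))$. Taking right annihilators gives $\rann(g(\lann(t_j)))\subseteq\rann(g(\lann(T)))$, and the latter is a right ideal, so it contains the sum. The ``resp.'' versions for $\morp({_R}R)$ are identical, because only the monotonicity of $g$ and $f$ is used.

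There is no genuine obstacle. The only care needed is bookkeeping of sides: checking that each annihilator is a left or right ideal of the correct side, so that it absorbs the relevant sum. It is also worth noting explicitly why the reverse inclusions are not claimed: without injectivity of $R$ there is no analogue of the Ikeda--Nakayama identity, and without injectivity of $g$ images of intersections need not be intersections of images.
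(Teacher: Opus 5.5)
Your proof is correct and follows essentially the same route as the paper. Both arguments rest on three facts: the annihilator of a sum is the intersection of the annihilators, $f$ and $g$ are monotone as set maps (with bijectivity giving the equality in the automorphism case), and annihilators reverse inclusions. The only differences are cosmetic: in (1) you pass directly through $\lann(T+T')\subseteq\lann(T)$ rather than through $g(X\cap Y)\subseteq g(X)\cap g(Y)$, and you state explicitly that the target annihilator is a one-sided ideal and therefore contains the sum, which the paper leaves implicit.
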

 
\begin{proof}
\begin{enumerate}
  \item Since $\lann(T + T') = \lann(T) \cap \lann(T')$,
         $g\left(\lann(T + T')\right)=g\left(\lann(T)\cap \lann(T^{\prime})\right)$. Since for any map $g$ and
        subsets $X, Y$, 
        $g(X \cap Y) \subseteq g(X) \cap g(Y)$, we have $g\left(\lann(T + T')\right) \subseteq g\left(\lann(T))\cap g(\lann(T^{\prime})\right) \subseteq g\left(\lann(T))+ g(\lann(T^{\prime})\right)$. 
        If $g$ is invertible, then $g(X \cap Y) = g(X) \cap g(Y)$ for all
        subsets $X, Y$, giving the equality in the automorphism case.
 
  \item Since $\rann(L) = \bigcap_{i=1}^{n} \rann(x_i)$, we have
        $f(\rann(L)) \subseteq f(\rann(x_i))$ for each $i$. Consequently,
        $\lann\!\left(f(\rann(L))\right) \supseteq
        \lann\!\left(f(\rann(x_i))\right)$ for each $i$ and hence
        $$\lann\!\left(f(\rann(L))\right) \supseteq
        \sum_{i=1}^{n} \lann\!\left(f(\rann(x_i))\right).$$ 
        The second assertion is clear.
 
  \item The argument is similar to that used in (2).
\end{enumerate}
\end{proof}
\begin{remark}
Taking $f=\id_{R_R}$ in Proposition~\ref{prop:annihilator-endomorphisms-revised}(2), the condition $\lann(\rann(x_i))=Rx_i$ is precisely the \cite{nicholson1995principally}*{Lemma 1.1} of left $p$-injective rings. Likewise, taking $g=\id_{R_R}$ in Proposition~\ref{prop:annihilator-endomorphisms-revised}(3) yields $\rann(\lann(t_j))=t_jR$, which is the corresponding characterization of right $p$-injective rings. Thus, the hypotheses in parts (2) and (3) may be regarded as endomorphism-twisted analogous of the classical double-annihilator conditions for $p$-injectivity. Moreover, Proposition~\ref{prop:annihilator-endomorphisms-revised} shows that these point-wise endomorphism-twisted double-annihilator conditions naturally extend to containment relations for finitely generated one-sided ideals.
\end{remark}

Recall that a module $M$ is semi-projective if $Sf=\homo_R(M,f(M))$ holds for all $f\in S$. Harada proved that for an artinian semi-projective module \(M_R\), the endomorphism ring \(S=\morp_R(M)\) is semi-primary, hence perfect (\cite{harada1970semi}*{Proposition 2.4}). In our next theorem we consider the case of endoartinian semi-projective modules. 
 
\begin{theorem}\label{thm:endoartinian+semi-projective=>perfect}
    Let $M$ be an endoartinian semi-projective $R$-module. Then $S$ is left endoartinian (that is, right perfect). Moreover, if $S$ is semiprime, then $S$ is semisimple artinian.
\end{theorem}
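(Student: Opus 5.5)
The plan is to show that semi-projectivity turns the lattice of images of endomorphisms of $M$ into the lattice of principal left ideals of $S$, order-isomorphically. The descending chain condition then transfers from one lattice to the other. Bass's theorem finishes the first assertion, and a short nilpotent-ideal argument handles the semiprime case.

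\textbf{Step 1 (lattice correspondence).} For $f,g\in S$ I will show that
\[
Sf\subseteq Sg \iff f(M)\subseteq g(M).
\]
If $Sf\subseteq Sg$, then $f=hg$ for some $h\in S$, where composition is written so that $g$ is applied first. Hence $f(M)=h(g(M))$, and this lies in the image of the composite, which sits inside the image computed through $g$. I must check that the composition convention matches the paper's $Sf=\homo_R(M,f(M))$; with that convention the inclusion of images follows.

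Conversely, suppose $f(M)\subseteq g(M)$. Semi-projectivity gives
\[
Sf=\homo_R(M,f(M))\subseteq \homo_R(M,g(M))=Sg .
\]
So $Sf\mapsto f(M)$ is a well-defined order isomorphism between the principal left ideals of $S$ and the images of endomorphisms of $M$. Well-definedness uses the case of equal images.

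\textbf{Step 2 (right perfectness).} Take a descending chain $Sf_1\supseteq Sf_2\supseteq\cdots$ of principal left ideals. It gives a descending chain $f_1(M)\supseteq f_2(M)\supseteq\cdots$ of images, which stabilizes because $M$ is endoartinian. By Step 1 the original chain stabilizes as well. Hence $S$ has DCC on principal left ideals, i.e.\ $S$ is left endoartinian. By Bass's theorem (the left-right mirror of Proposition~\ref{prop:endo_noether_artin}(2)), $S$ is right perfect.

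\textbf{Step 3 (semiprime case).} Since $S$ is right perfect, $S/J(S)$ is semisimple artinian and $J=J(S)$ is T-nilpotent on the appropriate side. It suffices to show $J=0$.

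Suppose $J\neq0$. Right perfectness means every nonzero right $S$-module has nonzero socle. Applying this to the right $S$-module $J$ (or, equivalently, using T-nilpotence directly) I aim to find $0\neq x\in J$ with $xJ=0$. Then $SxS\subseteq J$, so
\[
(SxS)^2=Sx(SxS)\subseteq SxJ=0 .
\]
Thus $SxS$ is a nonzero nilpotent ideal, contradicting semiprimeness. Therefore $J=0$, and $S\cong S/J$ is semisimple artinian.

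\textbf{Main obstacle.} The delicate point is Step 3: I must make sure the side of T-nilpotence or socle furnished by right perfectness gives $xJ=0$ (or $Jx=0$, in which case use $(SxS)^2\subseteq JxS=0$). Either version yields a nonzero nilpotent two-sided ideal, so the argument should go through once the sides are checked. Step 1's composition convention is a minor bookkeeping check.
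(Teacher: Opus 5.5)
Your Steps~1--2 are the paper's argument, packaged as an order isomorphism $Sf\mapsto f(M)$ rather than run chain by chain. The genuine difference is the semiprime case. The paper cites \cite{lam2001firstcourse}*{Theorem~10.24} (a semiprime ring with DCC on principal left ideals is semisimple artinian), which uses only the DCC from Step~2. You instead combine Bass's theorem ($S/J(S)$ semisimple) with an explicit square-zero ideal inside $J(S)$. Your route is self-contained and shows exactly where semiprimeness is used; the citation is shorter.

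Two slips need repairing, though neither is a real gap.

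In Step~1 the convention you state is backwards. If $g$ is applied first, then $f(M)=h(g(M))$, which need not lie in $g(M)$. No convention is needed at all: since $1\in S$, we have $f\in Sf\subseteq Sg=\homo_R(M,g(M))$, so $f(M)\subseteq g(M)$. This is how the paper argues; its identity $Sf=\homo_R(M,f(M))$ presupposes that $hg$ means ``first $h$, then $g$''.

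In Step~3, ``every nonzero right module has nonzero socle'' characterizes \emph{left} perfect rings, and the branch $xJ=0$ can genuinely fail in a right perfect ring. Take the ring of $\mathbb N\times\mathbb N$ matrices over a field that are scalar on the diagonal and have finitely many nonzero entries, all above the diagonal. It has DCC on principal left ideals, so it is right perfect. Yet no $0\neq x\in J$ satisfies $xJ=0$: if column $i$ of $x$ is nonzero, then $xE_{i,i+1}\neq 0$. Your fallback is the correct branch. DCC on principal left ideals yields a minimal left ideal $Sx\subseteq J$, and Nakayama forces $JSx=0$. Hence $Jx=0$, and $(SxS)^2=S(xS)xS\subseteq SJxS=0$, contradicting semiprimeness.
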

 
\begin{proof}
    Let $Sf_1 \supseteq Sf_2 \supseteq Sf_3 \supseteq \cdots$ be a descending chain of principal left ideals of $S$. Then $f_{i+1} \in Sf_i$ for each $i$. Let $f_{i+1} = s_i f_i$ for some $s_i \in S$. Then for every $m\in M$, $f_{i+1}(m)=(s_i f_i)(m)$. Since $M$ is semi-projective, $Sf_i=\hom_R(M,f_i(M))$. Hence $f_{i+1}(m) \in f_i(M)$. It follows that $f_1(M) \supseteq f_2(M) \supseteq f_3(M) \supseteq \cdots$ is a descending chain of images of $M$. Since $M$ is endoartinian, there exists $n$ such that $f_k(M) = f_{k+1}(M)$ for all $k \geq n$.
    
    Again invoking semi-projectivity of $M$, we have $Sf_i=\hom_R(M,f_i(M))$ for each $i$. Hence for all $k \geq n$, $S f_{k}=\hom_R(M,f_k(M))=\hom_R(M,f_{k+1}(M))=Sf_{k+1}$. By Bass's Theorem~P \cite{lam2001firstcourse}*{Theorem~23.20}, $S$ is right perfect.
 
   If $S$ is semiprime, by \cite{lam2001firstcourse}*{Theorem 10.24}, $S$ is semisimple artinian. 
\end{proof}
 
We now consider the complementary case of endonoetherian quasi-injective modules. We first prove the following lemma that will be useful to prove the main result.
 
\begin{lemma}\label{lem:qp-factor}
    Let $M$ be a quasi-injective $R$-module and let $f, g \in S$. If $\ker f \subseteq \ker g$, then there exists $u \in S$ such that $g = uf$.
\end{lemma}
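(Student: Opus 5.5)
The plan is to use the first isomorphism theorem to define a homomorphism on the submodule $f(M)$ of $M$, and then extend it to all of $M$ by quasi-injectivity. First, since $\ker f \subseteq \ker g$, we define a map
\[
  h \colon f(M) \to M, \qquad h(f(m)) = g(m) \quad (m \in M).
\]
We check that $h$ is well defined. If $f(m) = f(m')$, then $m - m' \in \ker f \subseteq \ker g$. Hence $g(m) = g(m')$. The map $h$ is additive and $R$-linear because $f$ and $g$ are: for $r \in R$ we have $h(f(m)r) = h(f(mr)) = g(mr) = g(m)r$. Equivalently, $g$ factors through the canonical isomorphism $M/\ker f \cong f(M)$ induced by $f$.

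Second, $f(M)$ is a submodule of $M$ and $h \colon f(M) \to M$ is an $R$-homomorphism. By quasi-injectivity of $M$, $h$ extends to an endomorphism $u \in S$ with $u|_{f(M)} = h$. Then for every $m \in M$,
\[
  (uf)(m) = u(f(m)) = h(f(m)) = g(m).
\]
Hence $g = uf$. Composition is written so that $uf$ means "first $f$, then $u$", matching the convention $Sf$ used in the proof of Theorem~\ref{thm:endoartinian+semi-projective=>perfect}.

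There is no real obstacle here. The only points needing care are the well-definedness of $h$, which is exactly where the hypothesis $\ker f \subseteq \ker g$ enters, and the observation that quasi-injectivity applies because the domain $f(M)$ of $h$ is a submodule of $M$ and its codomain is $M$ itself.
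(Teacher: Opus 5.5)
Your proposal is correct and matches the paper's proof: you define $h\colon f(M)\to M$ by $h(f(m))=g(m)$, which is well defined because $\ker f\subseteq\ker g$, extend it to some $u\in S$ by quasi-injectivity, and conclude $uf=g$. Your explicit checks of well-definedness and $R$-linearity just spell out steps the paper calls clear.
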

 
\begin{proof}
    Define $\bar{g} : f(M) \to M$ by $\bar{g}(f(m)) := g(m)$. This is well-defined since $\ker f \subseteq \ker g$. Clearly, $\bar{g}$ is an $R$-module homomorphism. Since $f(M)$ is a submodule of $M$ and $M$ is quasi-injective,  there exists $u \in S$ such that $u\vert_{f(M)} = \bar{g}$. Now for any $m \in M$: $(uf)(m) = u(f(m)) = \bar{g}(f(m)) = g(m)$. Hence $g = uf$.
\end{proof}
 
\begin{theorem}\label{thm:quasi-injective+endonoetherian=>perfect}
    Let $M$ be an endonoetherian quasi-injective $R$-module. Then $S$ is left endoartinian (that is, right perfect). Moreover, if $S$ is semiprime, then $S$ is semisimple artinian.
\end{theorem}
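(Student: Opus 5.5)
The plan mirrors the proof of Theorem~\ref{thm:endoartinian+semi-projective=>perfect}, with semi-projectivity and images replaced by quasi-injectivity and kernels. By Bass's Theorem~P \cite{lam2001firstcourse}*{Theorem~23.20}, $S$ is right perfect exactly when $S$ satisfies DCC on principal left ideals. So I will fix a descending chain
\[
Sf_1 \supseteq Sf_2 \supseteq Sf_3 \supseteq \cdots
\]
in $S$ and show that it stabilizes.

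First, translate the chain into a chain of kernels in $M$. Since $f_{i+1}\in Sf_i$, write $f_{i+1}=s_if_i$ with $s_i\in S$. If $f_i(m)=0$, then $f_{i+1}(m)=s_i(f_i(m))=0$. Hence
\[
\ker f_1\subseteq \ker f_2\subseteq\cdots
\]
is an ascending chain of kernels of endomorphisms of $M$. Because $M$ is endonoetherian, there is an $n$ with $\ker f_k=\ker f_{k+1}$ for all $k\ge n$. This direction needs no hypothesis on $M$.

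The key step is to lift this stabilization back to $S$. This is exactly where quasi-injectivity enters, through Lemma~\ref{lem:qp-factor}. For $k\ge n$ we have $\ker f_{k+1}\subseteq \ker f_k$. Applying the lemma with $f=f_{k+1}$ and $g=f_k$ gives $u\in S$ with $f_k=uf_{k+1}$. Therefore $Sf_k\subseteq Sf_{k+1}$, and together with the reverse inclusion, $Sf_k=Sf_{k+1}$ for all $k\ge n$. This is the only nontrivial point, and the lemma already handles it. The role of quasi-injectivity here is dual to that of semi-projectivity in Theorem~\ref{thm:endoartinian+semi-projective=>perfect}: it converts containment of kernels into left divisibility in $S$. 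Bass's Theorem~P then shows that $S$ is right perfect (left endoartinian, in the paper's terminology).

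For the semiprime assertion, a right perfect ring is semilocal and its Jacobson radical $J(S)$ is right $T$-nilpotent, in particular nil. I will argue as follows. If $J(S)\neq 0$, then the descending chain $J(S)\supseteq J(S)^2\supseteq\cdots$ and $T$-nilpotence produce a nonzero nilpotent one-sided ideal; for instance, the left socle of $S$ meets $J(S)$ nontrivially in a way that yields $x\neq 0$ with $xJ(S)=0$, and then $(SxS)^2\subseteq Sx J(S)S=0$. Such an ideal contradicts semiprimeness, so $J(S)=0$. Then $S$ is semilocal with zero radical, hence semisimple artinian. Alternatively, I will simply invoke \cite{lam2001firstcourse}*{Theorem 10.24}, exactly as in the proof of Theorem~\ref{thm:endoartinian+semi-projective=>perfect}, to conclude that $S$ is semisimple artinian.
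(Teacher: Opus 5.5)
Your proof is correct and is essentially the paper's argument: the descending chain $Sf_1\supseteq Sf_2\supseteq\cdots$ gives an ascending chain of kernels, endonoetherianity makes it stabilize, Lemma~\ref{lem:qp-factor} yields $f_k=uf_{k+1}$, Bass's Theorem~P gives right perfectness, and the semiprime case follows from Lam's Theorem~10.24, exactly as in the paper. Your optional direct argument for the semiprime part has the sides reversed: DCC on principal left ideals produces $0\neq x\in J(S)$ with $J(S)x=0$, not $xJ(S)=0$, and then $(SxS)^2=S(xSx)S\subseteq S\,J(S)x\,S=0$ still works; since you fall back on the citation anyway, this does not affect the proof.
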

 
\begin{proof}
    Let $Sf_1 \supseteq Sf_2 \supseteq \cdots$ be a descending chain of principal left ideals in $S$. Then, for each $i$, there exists $t_i \in S$ such that $f_{i+1} = t_i f_i$. Consequently, $\ker f_i \subseteq \ker f_{i+1}$ for each $i$. Thus $\ker f_1 \subseteq \ker f_2 \subseteq \ker f_3 \subseteq \cdots$ is an ascending chain of submodules of $M$. Since $M$ is endonoetherian, there exists a positive integer $n_0$ such that $\ker f_n = \ker f_{n+1}$ for all $n \ge n_0$. Fix $k \ge n_0$. By Lemma~\ref{lem:qp-factor},  there exists $u \in S$ such that $f_k = u f_{k+1}$. Hence $S f_k \subseteq S f_{k+1}$. It follows that $S f_k = S f_{k+1}$. Hence $S f_n = S f_{n+1}$ for all $n \ge n_0$. By Bass's Theorem~P \cite{lam2001firstcourse}*{Theorem~23.20}, $S$ is right perfect.
 
    If $S$ is semiprime, by \cite{lam2001firstcourse}*{Theorem 10.24}, $S$ is semisimple artinian. 
\end{proof}

Combining Theorem~\ref{thm:endoartinian+semi-projective=>perfect} and Theorem~\ref{thm:quasi-injective+endonoetherian=>perfect}, we get the following corollary showing that under semiprimeness, ascending and descending chain conditions on endoimages are equivalent. 
 
\begin{corollary}\label{cor:chain-symmetry}
Let $M$ be a quasi-injective and semi-projective right $R$-module and assume that $S$ is semiprime. Then the following statements are equivalent:
\begin{enumerate}
    \item $M$ is endonoetherian;
    \item $M$ is endoartinian;
    \item $S$ is semisimple artinian.
\end{enumerate}
\end{corollary}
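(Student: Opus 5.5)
The plan is to route both chain conditions through statement (3), using Theorem~\ref{thm:endoartinian+semi-projective=>perfect} and Theorem~\ref{thm:quasi-injective+endonoetherian=>perfect} for the forward directions. For the converses I would show directly that a semisimple artinian $S$ forces both chain conditions, using semi-projectivity for images and quasi-injectivity for kernels.

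\emph{(1)$\Rightarrow$(3) and (2)$\Rightarrow$(3).} If $M$ is endonoetherian, then since $M$ is quasi-injective and $S$ is semiprime, Theorem~\ref{thm:quasi-injective+endonoetherian=>perfect} gives that $S$ is semisimple artinian. If $M$ is endoartinian, then since $M$ is semi-projective and $S$ is semiprime, Theorem~\ref{thm:endoartinian+semi-projective=>perfect} gives the same conclusion.

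\emph{(3)$\Rightarrow$(2).} Take a descending chain of images $f_1(M)\supseteq f_2(M)\supseteq\cdots$. Semi-projectivity gives
\[
Sf_i=\hom_R(M,f_i(M)),
\]
so $Sf_1\supseteq Sf_2\supseteq\cdots$ is a descending chain of left ideals of $S$. Since $S$ is artinian, this chain stabilizes at some $n$. For $k\ge n$ we then have $f_k\in Sf_{k+1}$, say $f_k=uf_{k+1}$. At this point I am unsure: this factorization only yields $f_k(M)=u(f_{k+1}(M))$, not $f_k(M)\subseteq f_{k+1}(M)$. The same gap would occur if one tried to invoke injectivity of the correspondence $f(M)\mapsto \hom_R(M,f(M))$. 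So the implication should be proved via idempotents instead. Since $S$ is semisimple, $Sf_i=Se_i$ for an idempotent $e_i$. Then $f_i=se_i$ and $e_i=tf_i$. The first gives $f_i(M)=s(e_i(M))$, while $e_i=tf_i\in\hom_R(M,f_i(M))$ only shows $e_i(M)\subseteq t(f_i(M))$.

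My preferred fix is therefore a length argument. Since $S$ is semisimple artinian and $S=\hom(M,M)$, the equality
\[
\hom_R(M,f(M))=Sf
\]
should allow one to compare the images $f_k(M)\supseteq f_{k+1}(M)$ whose associated left ideals coincide. The key step is to show that if $N'\subseteq N$ are two images and $\hom_R(M,N')=\hom_R(M,N)$, then $N'=N$. This holds because $N=f(M)$ is itself the image of $f\in\hom_R(M,N)=\hom_R(M,N')$, so $N=f(M)\subseteq N'$. This resolves the gap: images are determined by $\hom_R(M,-)$, so the map from images to left ideals is injective and order-preserving, and stabilization of the ideals forces stabilization of the images.

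\emph{(3)$\Rightarrow$(1).} Take an ascending chain of kernels $\ker f_1\subseteq\ker f_2\subseteq\cdots$. By Lemma~\ref{lem:qp-factor}, $\ker f_i\subseteq\ker f_{i+1}$ gives $f_{i+1}\in Sf_i$, so $Sf_1\supseteq Sf_2\supseteq\cdots$ is descending and stabilizes because $S$ is artinian. If $Sf_k=Sf_{k+1}$, then $f_k=uf_{k+1}$, hence $\ker f_{k+1}\subseteq\ker f_k$, and the kernels stabilize.

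The main obstacle is the (3)$\Rightarrow$(2) step: justifying that equal left ideals $Sf$ force equal images, which is exactly where semi-projectivity has to be used in the form above.
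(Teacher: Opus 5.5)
Your proposal is correct. For (1)$\Rightarrow$(3) and (2)$\Rightarrow$(3) it is exactly the paper's argument. The paper states the corollary as the combination of Theorem~\ref{thm:endoartinian+semi-projective=>perfect} and Theorem~\ref{thm:quasi-injective+endonoetherian=>perfect}. That combination only gives those two implications, and the paper never writes out the converses.

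Your (3)$\Rightarrow$(1) and (3)$\Rightarrow$(2) supply them soundly by running the two theorems backwards:
\begin{itemize}
\item via Lemma~\ref{lem:qp-factor}, $\ker f\subseteq\ker g$ iff $g\in Sf$;
\item via semi-projectivity, $f(M)\mapsto\hom_R(M,f(M))$ is order-preserving, and it is injective because $f\in\hom_R(M,f(M))$.
\end{itemize}

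This gives a little more than the paper states. Neither converse uses semiprimeness, and neither needs more than DCC on principal one-sided ideals of $S$. So for quasi-injective $M$, endonoetherianity is equivalent to right perfectness of $S$. For semi-projective $M$, endoartinianity is equivalent to DCC on the ideals $\hom_R(M,f(M))$. Semiprimeness is needed only to collapse perfectness to (3).

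You should cut the exploratory middle of (3)$\Rightarrow$(2). The worry that $f_k=uf_{k+1}$ only gives $f_k(M)=u(f_{k+1}(M))$ comes from reading $Sf$ as $\{u\circ f\}$. The paper's definition $Sf=\hom_R(M,f(M))$ is written in the right-hand (Wisbauer) convention. In left-hand notation it reads $\hom_R(M,f(M))=\{f\circ h: h\in S\}$, which is a principal right ideal.

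Your final injectivity argument (not really a length argument) uses only this set equality and the fact that a semisimple artinian $S$ is artinian on both sides. It is therefore valid under either reading, and the detour through idempotents is unnecessary.
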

 
\begin{remark}\label{rem:chain_symmetry_semiprime}
The semiprime hypothesis in Corollary~\ref{cor:chain-symmetry} is essential. Let $R = \Bbbk[x]/(x^2)$ be the algebra of dual numbers over a field $\Bbbk$, and $M = R_R$. Then $S = \morp_R(R_R) \cong R$ is a local ring with $J(S) = \Bbbk x \neq 0$, hence not semiprime. Since $R$ is a symmetric Frobenius algebra, $R_R$ is quasi-injective. Although $M$ is endonoetherian (since $R$ is artinian), $S$ is not semisimple artinian. Thus the implication $(1) \Rightarrow (3)$ in Corollary~\ref{cor:chain-symmetry} fails when the semiprime hypothesis is dropped.
\end{remark}

\begin{note}\label{note:lam-connection}
    When \(M=R_R\), every endomorphism of \(M\) is given by left multiplication by \(f(1)\); hence the endomorphic images of \(M\) are precisely the principal right ideals of \(R\). Thus Corollary~\ref{cor:chain-symmetry} may be viewed as a module-theoretic generalization of \cite{lam2001firstcourse}*{Theorem~10.24}. The semiprimeness hypothesis transfers unchanged from \(R\) to \(S\), while the annihilator-theoretic ingredient in the ring case is replaced by the quasi-injectivity of \(M\), which provides the endomorphism factorization in Lemma~\ref{lem:qp-factor}.
\end{note}
 
These results indicate that endoartinianity, together with quasi-injectivity conditions, already captures much of the rigidity traditionally associated with quasi-Frobenius rings, leading naturally to the notion of $p$-quasi-Frobenius rings.

\section{$p$-Quasi-Frobenius Rings} \label{section3}

Recall that a ring \( R \) is said to be \emph{quasi-Frobenius} if it is two-sided artinian and two-sided self-injective. Motivated by this, we define a class of rings where artinianity is relaxed to endoartinianity (that is, perfectness) and self-injectivity is weakened to $p$-injectivity.

\begin{definition} \label{def:p-quasi-Frobenius}
    A ring \( R \) is called \emph{\( p \)-quasi-Frobenius} if it is two-sided endoartinian (that is, perfect) and two-sided $p$-injective.
\end{definition}

The prefix $``p"$, in \emph{\( p \)-quasi-Frobenius}, is for \textit{principal} emphasizing the role of principal ideals and $p$-injectivity. Clearly, the class of $p$-quasi-Frobenius rings lies between quasi-Frobenius rings and $p$-injective rings. Indeed,
\begin{align*}
    \text{quasi-Frobenius} \Rightarrow p\text{-quasi-Frobenius} \Rightarrow \text{left and right $p$-injective}
\end{align*}

Since a quasi-Frobenius ring is two-sided artinian and two-sided self-injective and hence, in particular, two-sided endoartinian and two-sided $p$-injective, the first implication is immediate. The second implication follows directly from the definition of $p$-quasi-Frobenius rings.

A less obvious, but important, consequence is that every $p$-quasi-Frobenius ring is also left and right Kasch. Indeed, a $p$-quasi-Frobenius ring is two‑sided perfect, so by Bass’s theorem its left socle is essential as a left ideal and its right socle is essential as a right ideal \cite{bass1960finitistic}. Moreover, $R$ is semiperfect and two‑sided $p$-injective. Applying \cite{nicholson2003quasi}*{Proposition~5.20}, we get $R$ is right Kasch. The left‑hand version follows symmetrically. Thus, we have 
\begin{align*}
    \text{quasi-Frobenius} \Rightarrow p\text{-quasi-Frobenius} \Rightarrow \text{left and right Kasch}
\end{align*}

The reverse implications fail, in general. First, for an example of a Kasch ring that is not \(p\)-quasi-Frobenius, consider the triangular ring $R=\begin{pmatrix}
    A & B\\
    0 & A
    \end{pmatrix}$, 
where \(A=\mathbb{Z}/4\mathbb{Z}\) and \(B=2\mathbb{Z}/4\mathbb{Z}\) is the ideal of \(A\). This ring is Kasch but fails to be $p$-injective on the left, and hence is not \(p\)-quasi-Frobenius (see \cite{lam1999lectures}*{pp.~79--80}).

Next, for an example of a \(p\)-quasi-Frobenius ring that is not quasi-Frobenius, consider the ring $R=\Bbbk [x_1,x_2,x_3,\cdots]\big/\big(x_i^3,\;x_ix_j\ (i\neq j),\;x_i^2-x_j^2 \textnormal{ for all }i,j\ge 1\big)$. Let $\mathfrak m:=x_1^2\neq 0$. Note that $\mathfrak m=x_i^2$ for all $i$. Then $J = J(R)=\left(\bigoplus_{i\ge1}\Bbbk x_i\right)\oplus \Bbbk \mathfrak m$ is the unique maximal ideal of \(R\). By \cite{camillo2000ikeda}*{Example~6}, \(R\) is quasi-continuous but not Ikeda--Nakayama. In particular, \(R\) is not self-injective and hence not quasi-Frobenius. Since \(J^3=0\), \(R\) is perfect; being commutative, it is two-sided endoartinian. Thus it remains to verify that \(R\) is $p$-injective. Since \(R\) is commutative, by \cite{nicholson1995principally}*{Lemma 1.1}, it suffices to prove that \(\ann_R(\ann_R(a))=Ra\) for every \(a\in R\).

\begin{enumerate}
    \item If \(a\notin J\), then \(a\) is a unit, so \(\ann_R(a)=0\) and hence \(\ann_R(\ann_R(a))=R=Ra\).

    \item If \(a=\mathfrak m\), then \(\ann_R(a)=J\), since \(x_i\mathfrak m=0\). Moreover, \(\ann_R(J)=R\mathfrak m\), because an element $r=\alpha+\sum_i\beta_i x_i+\gamma\mathfrak m$ annihilates every \(x_i\) if and only if \(\alpha=\beta_i=0\) for all \(i\). Thus $\ann_R(\ann_R(a))=\ann_R(J)=R\mathfrak m=Ra.$

    \item If $a=\sum_{k\in F}x_k+\gamma\mathfrak m,$ where \(F\) is a nonempty finite subset of \(\mathbb N\), then a direct computation shows that $\ann_R(a)=\left\{\sum_i\beta_i x_i+\delta\mathfrak m:\sum_{k\in F}\beta_k=0\right\}$. Hence $\ann_R(\ann_R(a)) =\left\{\lambda\sum_{k\in F}x_k+\mu\mathfrak m:\lambda,\mu\in\Bbbk \right\}$. Since $Ra=\left\{\lambda\sum_{k\in F}x_k+\mu\mathfrak m:\lambda,\mu\in\Bbbk \right\}$, it follows that \(\ann_R(\ann_R(a))=Ra\).
\end{enumerate}
Therefore, \(R\) is $p$-injective. Together with the perfectness noted above, it follows that \(R\) is \(p\)-quasi-Frobenius. 

\begin{remark} \label{rem:p-QF_ex_remark}
The example of a \(p\)-quasi-Frobenius ring $R$ which is not quasi-Frobenius, given above, illustrates several structural phenomena. As observed above, for this ring, $J=J(R)=\bigoplus_{i=1}^{\infty}\Bbbk x_i\oplus\Bbbk \mathfrak m$, $J^2=\Bbbk \mathfrak m,$ and $J^3=0.$

\begin{enumerate}

\item \emph{Infinite Loewy width.} The Loewy series $0\subset J^2\subset J\subset R$ has length $3$. However,  $J/J^2\cong\bigoplus_{i=1}^{\infty}\Bbbk x_i$ is an infinite direct sum of simple modules. Hence the middle Loewy layer has infinite width. This shows that $p$-quasi-Frobenius rings need not have the rigid Loewy structure typical of quasi-Frobenius rings.

\item \emph{Quotients of $p$-quasi-Frobenius rings need not be $p$-quasi-Frobenius.} Let $\overline{R}=R/(\mathfrak{m})$. Then $\overline{x}_i\overline{x}_j=0$ for all $i$ and $j$. So, $J(\overline{R})=\bigoplus_{i=1}^{\infty}\Bbbk \overline{x}_i$, and $J(\overline{R})^2=0$.

For any element $r=c+\sum_{i}d_i\overline{x}_i\in\overline{R}$, we have $\overline{x}_1 r=c\,\overline{x}_1$. Hence $\overline{x}_1 r=0$ if and only if $c=0$, so $\ann_{\overline{R}}(\overline{x}_1)=J(\overline{R})$.

If $s=c+\sum e_i\overline{x}_i\in\overline{R}$ annihilates $J(\overline{R})$, then $s\overline{x}_j=c\overline{x}_j=0$ for all $j$, forcing $c=0$. Thus $\ann_{\overline{R}}(J(\overline{R}))=J(\overline{R})$. However, $\overline{R}\,\overline{x}_1=\Bbbk \overline{x}_1 \neq J(\overline{R})$. Therefore, $\ann_{\overline{R}}\big(\ann_{\overline{R}}(\overline{x}_1)\big) \neq \overline{R}\,\overline{x}_1$, so $\overline{R}$ is not left $p$-injective. Hence $\overline{R}$ is not $p$-quasi-Frobenius.

\item \emph{Subrings of $p$-quasi-Frobenius rings need not be $p$-quasi-Frobenius.} Let $a=x_1+x_2$ and $b=x_3+x_4$ in \(R\). Then $a^2=(x_1+x_2)^2=x_1^2+x_2^2=\mathfrak{m}+\mathfrak{m}=0$, $b^2=(x_3+x_4)^2=x_3^2+x_4^2=\mathfrak{m}+\mathfrak{m}=0$, and $ab=(x_1+x_2)(x_3+x_4) =x_1x_3+x_1x_4+x_2x_3+x_2x_4=0$. Thus, the subring $S=\Bbbk [a,b]\subseteq R$ is isomorphic to $\Bbbk [a,b]/(a^2,b^2,ab)$ (which is not Gorenstein, not quasi-Frobenius). Its Jacobson radical is $J(S)=\operatorname{span}\{a,b\},$ with \(J(S)^2=0\). Hence \(S\) is local and perfect. For \(a\in S\), $\ann_S(a)=\{s\in S\mid sa=0\} =J(S)=\operatorname{span}\{a,b\}.$ Then $\ann_S(\ann_S(a)) =\ann_S(J(S)) =\operatorname{span}\{a,b\} =J(S)$. However, $Sa=\{0,a\}$. So, $\ann_S(\ann_S(a))\neq Sa.$ Therefore, \(S\) is not $p$-injective and consequently, \(S\) is not \(p\)-quasi-Frobenius.

\item \emph{Failure of quasi-Frobenius duality.} The regular module \(R_R\) is projective. We show that it is not injective. Define $f:J\to R$ by $f\!\left(\sum_{i}a_ix_i+c\mathfrak m\right)=\sum_{i}a_i\mathfrak m.$ Since \(\{x_i,\mathfrak m\}\) is a \(\Bbbk \)-basis of \(J\), this is well defined. Using \(x_ix_j=0\) for \(i\neq j\), \(x_i^2=\mathfrak m\), and \(\mathfrak m x_i=\mathfrak m^2=0\), one checks directly that \(f\) is an \(R\)-homomorphism.

Suppose \(f\) extended to an \(R\)-homomorphism \(R_R\to R_R\). Then there exists \(t\in R\) such that $f(y)=ty$ for all $y\in J$. In particular, $\mathfrak m=f(x_i)=tx_i$ for all $i\geq 1$. Every element of \(R\) has a unique expression $t=\alpha+\sum_{i\ge1}\beta_i x_i+\gamma\mathfrak m,$ where \(\alpha,\gamma,\beta_i\in\Bbbk \) and only finitely many \(\beta_i\) are nonzero. Choose \(i\) with \(\beta_i=0\). Then $tx_i=\alpha x_i+\beta_i\mathfrak m=\alpha x_i.$ Thus \(\mathfrak m=\alpha x_i\), which is impossible because \(\alpha x_i\) is either \(0\) or \(x_i\), and neither equals \(\mathfrak m\). Hence no such \(t\) exists, so \(f\) does not extend to \(R\). Therefore \(R_R\) is not injective.

Consequently, in this \(p\)-quasi-Frobenius ring, the regular module is projective but not injective. Thus the classical quasi-Frobenius duality between projective and injective modules fails for this example.
\end{enumerate}
\end{remark}

In the following note, we compute the perfect closure of the $p$-quasi-Frobenius ring $R$ along with the associated Ext and Tor groups, thereby exhibiting explicit homological obstructions to finite injective and  projective dimensions.

\begin{remark} \label{rem:infinite-projective-injective-dimension}
We retain the notation of Remark \ref{rem:p-QF_ex_remark}. 

\emph{The perfect closure.} We examine the images of $R$ under iterates of the Frobenius map \(F\) on $R$. Recall that, since \(R\) is an \(\Bbbk \)-algebra, the Frobenius map on $R$ is \(F(r)=r^2\) for all $r \in R$. We have $F(R)=\Bbbk \oplus \Bbbk \mathfrak{m}$ and $F^2(R)=F(\Bbbk \oplus \Bbbk \mathfrak{m})$. Since \(\mathfrak{m}^2=0\), $F^2(R)=\Bbbk$. It follows that for all \(n\ge 2\), \(F^n(R)=\Bbbk\) and the Frobenius acts as the identity on \(\Bbbk\). Therefore, the direct limit $R^\infty:=\varinjlim\bigl(R\xrightarrow{F}R\xrightarrow{F}R\xrightarrow{F}\cdots\bigr)$ stabilises after two steps, yielding $R^\infty\cong \Bbbk$. Thus, for any \(R\)-module \(N\), $\Ext^i_R(R^\infty,N)\cong \Ext^i_R(\Bbbk,N)$ and $\Tor^R_i(R^\infty,N)\cong \Tor^R_i(\Bbbk,N).$

    \begin{enumerate} 
\item \emph{Failure of finite injective dimension.} We first compute \(\Ext^1_R(\Bbbk,R)\). Applying \(\hom_R(-,R)\) functor to the short exact sequence
\[
0\longrightarrow J\longrightarrow R\longrightarrow \Bbbk\longrightarrow 0.
\]
we obtain the exact sequence
\[
0\to \hom_R(\Bbbk,R)\to \hom_R(R,R)
\xrightarrow{\alpha} \hom_R(J,R)\to \Ext^1_R(\Bbbk,R)\to 0.
\]
Here $\hom_R(\Bbbk,R)\cong \ann_R(J)=J^2=\Bbbk \mathfrak{m},$ and \(\hom_R(R,R)\cong R\) via $r\mapsto (s\mapsto rs)$. The map \(\alpha\) is induced by the inclusion \(J\hookrightarrow R\); explicitly, it sends a homomorphism \(\varphi:R\to R\) to its restriction \(\varphi|_J:J\to R\). 

To describe the cokernel, note that every \(R\)-linear map \(f:J\to R\) is uniquely determined by a common scalar \(\delta\in \Bbbk\) and a binary sequence \((\gamma_i)_{i\in\mathbb N}\in \Bbbk^{\mathbb N}\), via $f(x_i)=\delta x_i+\gamma_i \mathfrak{m}$ and $f(\mathfrak{m})=\delta \mathfrak{m}.$ Indeed, \(x_i f(x_i)=f(\mathfrak{m})\) forces the coefficient of \(x_i\) in \(f(x_i)\) to be exactly \(\delta\) for all \(i\). The image of \(\alpha\) consists precisely of those maps with \(\delta=\lambda\) and \((\gamma_i)=(\beta_i)\) having finite support, that is, the subspace $\Bbbk\times \Bbbk^{(\mathbb N)}$ inside $\Bbbk\times \Bbbk^{\mathbb N}$. Since \(\Ext^1_R(\Bbbk,R)\) is the cokernel of \(\alpha\), we obtain 
\[
E_1:=\Ext^1_R(\Bbbk,R) \cong \frac{\Bbbk\times \Bbbk^{\mathbb N}}
{\Bbbk\times \Bbbk^{(\mathbb N)}}
\cong
\frac{\Bbbk^{\mathbb N}}{\Bbbk^{(\mathbb N)}}
\neq 0.
\]

We now prove that \(E_i:=\Ext^i_R(\Bbbk,R)\neq 0\) for every \(i\ge 1\), which will directly imply \(\injdim_R(R)=\infty\). Suppose, for contradiction, that \(E_n=0\) for some \(n\ge 2\), and choose such an \(n\) minimal. Consider the short exact sequence
\[
0\longrightarrow \Bbbk\longrightarrow J\longrightarrow \Bbbk^{(\mathbb N)}\longrightarrow 0,
\]
where the inclusion \(\Bbbk\hookrightarrow J\) sends \(1\mapsto \mathfrak{m}\), and the surjection sends \(x_i\mapsto \bar x_i\) and \(\mathfrak{m}\mapsto 0\). Applying \(\hom_R(-,R)\) and using the isomorphisms $\Ext^i_R(J,R)\cong \Ext^{i+1}_R(\Bbbk,R)=E_{i+1}$ for all $i\geq 1$, which follow from \(0\to J\to R\to \Bbbk\to 0\) since \(\Ext^i_R(R,R)=0\) for \(i>0\), together with $\Ext^i_R(\Bbbk^{(\mathbb N)},R)\cong E_i^{\mathbb N},$ we obtain, for each \(i\ge 1\), the exact sequence
\[
\cdots\to E_i^{\mathbb N}\to E_{i+1}\to E_i\to E_{i+1}^{\mathbb N}\to E_{i+2}\to E_{i+1}\to\cdots.
\]
Since \(E_1\neq 0\), suppose for contradiction that \(E_n=0\) for some \(n\ge 2\), and choose such an \(n\) minimal. Setting \(i=n-1\) in the exact sequence above yields the fragment
\[
E_{n-1}^{\mathbb N}\to E_n\to E_{n-1}\to E_n^{\mathbb N}\to E_{n+1}\to\cdots.
\]
Since \(E_n=0\), exactness at \(E_n\) forces the map $E_{n-1}^{\mathbb N}\to E_n$ to be zero and the map $E_n\to E_{n-1}$ to be injective. Exactness at \(E_{n-1}\) then forces the map $E_{n-1}\to E_n^{\mathbb N}=0$ to be injective, which is possible only if \(E_{n-1}=0\). This contradicts the minimality of \(n\). Hence no such \(n\) exists, and therefore $\Ext^i_R(\Bbbk,R)\neq 0$ for all $i\ge 1$. Consequently, \(\injdim_R(R)=\infty\), so \(R\) is not quasi-Frobenius.

\item \emph{Failure of finite projective dimension.} 
Let $T_i:=\Tor_i^R(\Bbbk,\Bbbk)$ for each \(i\ge 0\). We have the exact sequence of \(R\)-modules
\[
0\to k\to J\to \Bbbk^{(\mathbb N)}\to 0,
\]
as discussed above. Tensoring this sequence with \(\Bbbk\) over \(R\) yields, for each \(i\ge 1\), the long exact Tor sequence. To make this tractable, we use two standard isomorphisms. First, from the exact sequence
\[
0\to J\to R\to \Bbbk\to 0,
\]
we have $\Tor_i^R(\Bbbk,J)\cong \Tor_{i+1}^R(\Bbbk,\Bbbk)=T_{i+1}$ for all $i \geq 1$. Second, because Tor commutes with arbitrary direct sums, we obtain $\Tor_j^R(\Bbbk,\Bbbk^{(\mathbb N)})\cong T_j^{(\mathbb N)}$ for every \(j\). Substituting these into the long exact sequence of \(\Tor_*^R(\Bbbk,-)\) applied to
\[
0\to \Bbbk\to J\to \Bbbk^{(\mathbb N)}\to 0
\]
gives the following exact sequence, for each \(i\ge 1\):
\[
\cdots\to T_{i+1}^{(\mathbb N)}\to T_i\to T_{i+1}\to T_i^{(\mathbb N)}\to T_{i-1}\to T_i\to T_{i-1}^{(\mathbb N)}\to\cdots.
\]
We now prove that \(T_n\neq 0\) for every \(n\ge 1\). The base case is immediate: $T_1\cong J/J^2\cong \Bbbk^{(\mathbb N)}\neq 0.$ Suppose, for contradiction, that some \(T_n=0\) with \(n\ge 2\), and choose such an \(n\) minimal. Setting \(i=n-1\) in the exact sequence above isolates the fragment $0\longrightarrow T_{n-1}\longrightarrow 0$, because \(T_n=0\) implies both \(T_n\) and \(T_n^{(\mathbb N)}\) vanish. Exactness at the middle term \(T_{n-1}\) forces its kernel and image to match, which immediately yields \(T_{n-1}=0\). This contradicts the minimality of \(n\). Therefore no such \(n\) exists, and $T_i\neq 0$ for every $i\ge 1$. Since the perfect closure satisfies \(R^\infty\cong k\), we have $\Tor_i^R(R^\infty,\Bbbk)\cong \Tor_i^R(\Bbbk,\Bbbk)=T_i\neq 0$ for all $i \geq 1$.

We now conclude directly that \(\projdim_R(\Bbbk)=\infty\). Indeed, if \(\projdim_R(\Bbbk)=d<\infty\), then for every \(R\)-module \(N\) one has $\Tor_i^R(\Bbbk,N)=0$ for all  $i> d$. In particular, \(T_i=0\) for all \(i>d\), contradicting the fact that \(T_i\neq 0\) for every \(i\ge 1\). Hence $\projdim_R(\Bbbk)=\infty$, and  therefore $\gldim(R)=\infty$.

This provides the projective analogue of the injective-dimension failures above, confirming that non-noetherian \(p\)-quasi-Frobenius rings can be homologically wild on both sides of the Ext-Tor duality.
    \end{enumerate}
\end{remark}

Next, we show that, unlike quasi-Frobenius rings, \(p\)-quasi-Frobenius rings need not satisfy the usual reflexive duality. The following example exhibits a finitely generated module over the \(p\)-quasi-Frobenius ring \(R\) discussed above that is not reflexive.

\begin{example}\label{ex:explicit-nonreflexive-module}
Let $R=\Bbbk[x_1,x_2,\ldots]\big/\big(x_i^3,\;x_ix_j\ (i\neq j),\;x_i^2-x_j^2\big)$ be the \(p\)-quasi-Frobenius ring discussed above. Consider the cyclic \(R\)-module $M=R/(x_1,x_2).$ Although \(M\) is cyclic, it is infinite-dimensional, as a \(\Bbbk\)-vector space. We show that the canonical evaluation map $\delta_M:M\to M^{**}$ is not surjective.

Let \(I=(x_1,x_2)\). Since \(M\cong R/I\), we have $M^*=\hom_R(M,R)\cong \ann_R(I)=\ann_R(x_1)\cap\ann_R(x_2).$
A direct computation gives $\ann_R(x_1)=\bigoplus_{i\neq1}\Bbbk x_i\oplus\Bbbk \mathfrak m,$ and $\ann_R(x_2)=\bigoplus_{i\neq2} \Bbbk x_i\oplus\Bbbk \mathfrak m.$ Hence $M^*\cong L:=\bigoplus_{i\ge3}\Bbbk x_i\oplus\Bbbk \mathfrak m.$

Define an \(R\)-homomorphism \(\psi:L\to R\) by $\psi(x_i)=\mathfrak m$ for $i\ge 3$, and $\psi(\mathfrak m)=0$. To check \(R\)-linearity, it suffices to test the generators. Since \(x_i^2=\mathfrak m\), \(x_ix_j=0\) for \(i\neq j\), \(x_i\mathfrak m=0\), and \(\mathfrak m^2=0\), we have $x_i\psi(x_i)=x_i\mathfrak m=0=\psi(x_i^2),$ and, for \(i\neq j\), $x_i\psi(x_j)=x_i\mathfrak m=0=\psi(x_ix_j).$ Also, $x_i\psi(\mathfrak m)=0=\psi(x_i\mathfrak m).$ Thus, \(\psi\) respects the defining relations and hence is a well-defined \(R\)-module homomorphism.

Now, let $u=\alpha+\sum_{j\ge1}\beta_jx_j+\gamma\mathfrak m\in R.$ Under the identification \(M^*\cong L\), the canonical map \(\delta_M(\bar u)\in M^{**}=\hom_R(L,R)\) is given by $\delta_M(\bar u)(\ell)=\ell u$ for $\ell\in L$. In particular, for \(i\ge3\), $\delta_M(\bar u)(x_i)=x_iu=\alpha x_i+\beta_i\mathfrak m.$ Since every element of \(R\) involves only finitely many \(x_i\), the set $\{\,i\ge3:\delta_M(\bar u)(x_i)=\mathfrak m\,\}$ is finite.

However, \(\psi(x_i)=\mathfrak m\) for every \(i\ge3\). Therefore, \(\psi\) cannot lie in the image of \(\delta_M\). Hence \(\delta_M\) is not surjective, and the cyclic, hence finitely generated, module $M=R/(x_1,x_2)$ is not reflexive. 

Moreover, since \(\mathfrak{m}=x_1^2\in I\), we have \(\bar{\mathfrak{m}}=0\) in \(M\). Moreover, \(x_iM=0\) for every \(i\ge 1\), because \(x_i\bar x_j=0\) for \(i\neq j\) and \(x_i\bar x_i=\mathfrak{m}=0\). Hence \(JM=0\). It follows that \(M\) is a semisimple \(R\)-module and hence $M\cong \Bbbk^{(\Lambda)}$ for some index set \(\Lambda\) (in fact, $M\cong \Bbbk\oplus \bigoplus_{i\ge 3}\Bbbk \bar x_i$). Therefore, $\Ext^i_R(M,R) \cong \Ext^i_R(\Bbbk^{(\Lambda)},R) \cong \prod_\Lambda \Ext^i_R(\Bbbk,R) \cong \prod_\Lambda E_i \neq 0$ for every \(i\ge 1\), since each \(E_i\neq 0\). Hence \(\injdim_R(M)=\infty\) which is consistent with the non-reflexivity of \(M\) established in Remark~\ref{rem:infinite-projective-injective-dimension}(1). 
\end{example}

It is known that a module over a quasi-Frobenius ring is reflexive if and only if it is finitely generated. While Example~\ref{ex:explicit-nonreflexive-module} shows the forward implication is destroyed in the $p$-quasi-Frobenius setting, the reverse implication touches upon a long-standing open problem \emph{whether every reflexive module over a perfect S-ring (that is, a perfect Kasch ring) is necessarily finitely generated} due to Schulz (\cite{schulz1979reflexive}*{p. 535}). A positive answer to this question would prove that every perfect right injective cogenerator (right psuedo-Frobenius) ring is quasi-Frobenius, thereby resolving the one-sided formulation of Faith's conjecture. While the two-sided case is settled — a perfect ring which is a right and left cogenerator is necessarily quasi-Frobenius \cite{faith2002when}*{Corollary 2.4} — the one-sided case remains notoriously open.

Since $p$-quasi-Frobenius rings are perfect Kasch rings, they provide a rigorous testing ground for Schulz's question. This leads us to raise the following question: 

\begin{question}
    Is every reflexive module over a $p$-quasi-Frobenius ring finitely generated?
\end{question} 

For a principal two-sided ideal ring, the notions of quasi-Frobenius and $p$-quasi-Frobenius coincide. This allows us to characterize quasi-Frobenius rings directly in terms of endoartinianity (i.e., perfectness). We include this result for the sake of completeness. 

\begin{theorem} \label{thm:PIR-pQF-QF}
Let $R$ be a principal two-sided ideal ring. Then the following statements are equivalent:
\begin{enumerate}
    \item $R$ is quasi-Frobenius;
    \item $R$ is $p$-quasi-Frobenius.
\end{enumerate}
\end{theorem}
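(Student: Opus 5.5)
The implication (1)$\Rightarrow$(2) needs no hypothesis on $R$ and was already recorded after Definition~\ref{def:p-quasi-Frobenius}. A quasi-Frobenius ring is two-sided artinian, hence two-sided perfect. By Proposition~\ref{prop:endo_noether_artin}(2) it is therefore two-sided endoartinian. It is also two-sided self-injective, hence two-sided $p$-injective.

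For (2)$\Rightarrow$(1), I read ``principal two-sided ideal ring'' as: every right ideal is principal and every left ideal is principal. Then $R$ is right and left noetherian, since every right or left ideal is finitely generated. By the characterization recalled in the preliminaries (\cite{lam1999lectures}*{Theorem 15.1}), it suffices to show that $R$ is right self-injective, given that $R$ is right noetherian. I will use Baer's criterion. Take a right ideal $T$ of $R$ and a homomorphism $f\colon T\to R_R$. By hypothesis $T=aR$ for some $a\in R$. Since $R$ is right $p$-injective, $f$ extends to an $R$-homomorphism $R_R\to R_R$. Hence every homomorphism from a right ideal into $R$ extends to $R$, so $R_R$ is injective by Baer's criterion. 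Thus $R$ is right noetherian and right self-injective, i.e.\ quasi-Frobenius. Symmetrically, left $p$-injectivity together with left principality gives left self-injectivity, which yields the two-sided form directly.

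Under this reading the perfectness hypothesis is not even needed. If instead ``principal two-sided ideal ring'' were meant to say only that the two-sided ideals are principal, I would route the argument through perfectness. In that case $R$ is perfect, hence semiperfect with $J(R)$ right and left T-nilpotent. I would then try to upgrade to the principal one-sided situation using the double annihilator identities $\rann(\lann(a))=aR$ and $\lann(\rann(a))=Ra$, which come from \cite{nicholson1995principally}*{Lemma 1.1} for two-sided $p$-injective rings.

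The main obstacle is this interpretive point. I would fix the meaning as every one-sided ideal being principal, which matches the use of ``principal right ideal ring'' in Theorem~\ref{thm:semiprimary_PRIR}. With that reading the key step is simply that $p$-injectivity coincides with full injectivity once every right ideal is principal, via Baer's criterion, and the rest is bookkeeping.
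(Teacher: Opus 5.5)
Your proof is correct. The paper states this theorem ``for the sake of completeness'' and gives no proof, so there is no competing argument to compare against. Your Baer's-criterion argument is the natural one and is complete.

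Your reading of ``principal two-sided ideal ring'' as ``every right ideal and every left ideal is principal'' is the one the paper itself relies on. Theorem~\ref{thm:semiprimary_PRIR} invokes this theorem for a ring that is both a principal right and a principal left ideal ring. The proof of Corollary~\ref{cor:PRIR-QF-pQF-pJohns} uses that every left ideal is principal. You can therefore drop the hedging paragraph about the weaker reading. As written it is only a plan (``I would then try to upgrade\ldots''), not an argument, and nothing in the paper needs it. You also work with the extension definition of $p$-injectivity directly rather than a double-annihilator characterization. That sidesteps the paper's inconsistent left/right labelling of those conditions.

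Your route also shows more than the statement. The implication (2)$\Rightarrow$(1) uses neither perfectness nor any left-hand hypothesis. A principal right ideal ring with $R_R$ $p$-injective is already right noetherian and right self-injective, hence quasi-Frobenius. So the endoartinian half of the $p$-quasi-Frobenius condition plays no role here, even though the paper presents the theorem as characterizing quasi-Frobenius rings ``in terms of endoartinianity.''

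The same observation gives the quasi-Frobenius conclusion of Theorem~\ref{thm:semiprimary_PRIR} directly. ``Principal right ideal ring and right self-injective'' already suffices, without the semiprimary, Kasch, or principal-left hypotheses.
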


Since factor rings of a principal two-sided ideal ring are again principal
two-sided ideal rings, and since the artinian property is inherited by factor rings, the preceding theorem yields the following.

\begin{corollary}\label{cor:factor-pqf}
Let $R$ be an artinian principal two-sided ideal ring. Then every factor ring of $R$ is $p$-quasi-Frobenius. Equivalently, within this class, the property of being $p$-quasi-Frobenius is preserved under factor rings.
\end{corollary}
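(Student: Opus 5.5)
The plan is to reduce the statement to Theorem~\ref{thm:PIR-pQF-QF}, so that only the classical quasi-Frobenius property of the factor rings needs to be checked. Let $I$ be a two-sided ideal of $R$ and put $\overline{R}=R/I$.

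\textbf{Step 1: $\overline{R}$ is in the class covered by Theorem~\ref{thm:PIR-pQF-QF}.} I would first record two facts.
\begin{enumerate}
\item $\overline{R}$ is again a principal two-sided ideal ring. Every right (left) ideal of $\overline{R}$ has the form $K/I$ with $K\supseteq I$. If $K=aR$ (resp. $Ra$), then $K/I=\bar a\overline{R}$ (resp. $\overline{R}\bar a$).
\item $\overline{R}$ is two-sided artinian, since $\overline{R}_{\overline{R}}$ and $R_R/I$ have the same submodule lattice, and similarly on the left.
\end{enumerate}
By Theorem~\ref{thm:PIR-pQF-QF} it therefore suffices to show that $\overline{R}$ is quasi-Frobenius. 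Since $\overline{R}$ is artinian, it is two-sided perfect, hence two-sided endoartinian by Proposition~\ref{prop:endo_noether_artin}(2). Consequently the only real content is one-sided self-injectivity of $\overline{R}$; by \cite{lam1999lectures}*{Theorem 15.1}, right self-injectivity plus right noetherianity would already give quasi-Frobenius.

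\textbf{Step 2: self-injectivity of $\overline{R}$.} Because $\overline{R}$ is itself an artinian principal two-sided ideal ring, it is enough to prove that every artinian principal two-sided ideal ring $A$ is quasi-Frobenius; then apply this to $A=\overline{R}$. I would proceed as follows.
\begin{enumerate}
\item Decompose $A$ into indecomposable ring summands, which is possible since $A$ is artinian.
\item Invoke the Asano--Nakayama structure theory for artinian principal ideal rings: each indecomposable summand is a full matrix ring $M_n(B)$ over a local artinian ring $B$ whose radical is principal on both sides, $J(B)=\pi B=B\pi$.
\item Show that such a $B$ is uniserial with ideals $B\supseteq \pi B\supseteq \pi^2B\supseteq\cdots\supseteq \pi^kB=0$. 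In particular $\soc(B_B)=\soc({}_BB)=\pi^{k-1}B$ is simple and essential on each side.
\item Conclude that $B$ is quasi-Frobenius, using the criterion that an artinian ring with simple essential socle on each side (a local Frobenius ring) is quasi-Frobenius. The double-annihilator conditions $\lann(\rann(\pi^j))=B\pi^j$ are immediate from uniseriality.
\item Pass to $A$, since the quasi-Frobenius property is Morita invariant and preserved under finite products.
\end{enumerate}
In the commutative case this specialises to the familiar fact that an artinian principal ideal ring is a finite product of local chain rings, each of which is quasi-Frobenius.

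\textbf{Main obstacle.} I expect the difficulty to lie in Step 2, and specifically in the noncommutative structure theorem. One must make sure that the notion of ``principal two-sided ideal ring'' used in Theorem~\ref{thm:PIR-pQF-QF} (every one-sided ideal principal) is strong enough to force the matrix-over-uniserial form. It is also where the argument is most likely to fail: serial rings such as upper triangular rings are not quasi-Frobenius, so one must check that the principal hypothesis genuinely rules them out. If that check fails, I would fall back on proving $\rann(\lann(a))=aR$ and $\lann(\rann(a))=Ra$ for the factor ring directly from principality and artinianity, since two-sided $p$-injectivity together with perfectness is exactly Definition~\ref{def:p-quasi-Frobenius}. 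Once Step 2 is settled, Theorem~\ref{thm:PIR-pQF-QF} yields that $\overline{R}$ is $p$-quasi-Frobenius, which is the corollary.
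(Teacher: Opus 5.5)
Your proposal is correct. Its Step 1 is exactly the paper's argument, which is a single sentence: factor rings of principal two-sided ideal rings are again such rings, artinianity passes to factor rings, and Theorem~\ref{thm:PIR-pQF-QF} then "yields" the corollary. As you noticed, Theorem~\ref{thm:PIR-pQF-QF} only asserts QF $\Leftrightarrow$ $p$-QF inside principal ideal rings. The paper's sentence therefore tacitly relies on the classical fact (Asano) that an artinian principal two-sided ideal ring is quasi-Frobenius. Your Step 2 proves precisely this, so your version is the more complete one. Once $\overline{R}$ is known to be QF, you do not need Theorem~\ref{thm:PIR-pQF-QF} at all, since QF $\Rightarrow$ $p$-QF follows directly from the definitions.

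The obstacle you flag in Step 2(2) is settled by a short top-counting argument, and this is exactly what excludes triangular rings and non-local Nakayama algebras.
\begin{enumerate}
\item Write $A_A=\bigoplus_i P_i^{n_i}$ with the $P_i$ pairwise non-isomorphic indecomposable projectives, and put $S_i=P_i/P_iJ$. Since $A$ is right noetherian, a right ideal $I$ is principal if and only if $S_i$ occurs in $I/IJ$ with multiplicity at most $n_i$ for every $i$.
\item Suppose $0\neq f\colon P_i\to P_j$ with $i\neq j$. Place $f(P_i)$ in one copy of $P_j$. Then $P_i^{n_i}\oplus f(P_i)$ is a right ideal with top $S_i^{n_i+1}$, so it is not principal.
\item Hence $\Hom_A(P_i,P_j)=0$ for $i\neq j$, and $A\cong\prod_i \mathbb{M}_{n_i}(\morp(P_i))$ with each $B=\morp(P_i)$ local.
\item Apply the same count to $J(\mathbb{M}_n(B))\cong (PJ)^n$ and transport along the Morita equivalence. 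This shows $J(B)$ is cyclic on each side.
\item If $J(B)=\pi B\neq 0$, then $\pi\notin J(B)^2$ and $J(B)/J(B)^2$ is one-dimensional as a left $B/J(B)$-space. Nakayama's lemma then gives $J(B)=B\pi$.
\end{enumerate}
From there, your uniseriality and double-annihilator computations, together with the Morita and finite-product step, complete the proof.
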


The following result is straightforward due to Rutter \cite{rutter1969characterizations}*{Theorem 2}.

\begin{corollary}\label{cor:cyclic-reflexive-pqf-QF}
    Let $R$ be a $p$-quasi-Frobenius ring. If every cyclic left $R$-module is reflexive, then $R$ is quasi-Frobenius.
\end{corollary}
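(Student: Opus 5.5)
The plan is to reduce the statement to Rutter's characterization \cite{rutter1969characterizations}*{Theorem 2}. As I recall it, that theorem says a ring over which every cyclic left module is reflexive is quasi-Frobenius, provided a mild finiteness or chain-condition hypothesis holds. The first step is to record exactly which hypotheses the cited theorem needs beyond the reflexivity of cyclic left modules. Then I would check that each one follows from $R$ being $p$-quasi-Frobenius, using Definition~\ref{def:p-quasi-Frobenius} and the consequences listed after it.

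Concretely, a $p$-quasi-Frobenius ring $R$ satisfies the following.
\begin{enumerate}
  \item It is two-sided perfect, by Definition~\ref{def:p-quasi-Frobenius} and Proposition~\ref{prop:endo_noether_artin}(2). In particular it is semiperfect, has essential left and right socles by Bass, and has DCC on principal one-sided ideals on both sides.
  \item It is two-sided $p$-injective, so $\lann(\rann(a))=Ra$ and $\rann(\lann(a))=aR$ for every $a\in R$ \cite{nicholson1995principally}*{Lemma 1.1}.
  \item It is left and right Kasch, as shown after Definition~\ref{def:p-quasi-Frobenius}.
\end{enumerate}
If Rutter's theorem assumes left perfectness, the Kasch property, or an injectivity-type hypothesis, these facts supply it directly.

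In case the cited version needs more, I would also prepare the standard direct route. Reflexivity of the cyclic module $R/L$ implies that it is torsionless. Torsionlessness of $R/L$ is equivalent to $L=\lann(\rann(L))$. Hence every left ideal of $R$ is a left annihilator. Since left annihilators are exactly the left annihilators of right annihilators, the order-reversing correspondence $L\mapsto\rann(L)$ turns DCC on left ideals into ACC on right annihilators.

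So it remains to show that $R$ is left artinian, or equivalently has ACC on right annihilators. I would try to get this from perfectness together with the annihilator condition, in the style of the Kasch/perfect arguments of \cite{nicholson2003quasi}. Once $R$ is left artinian with every left ideal an annihilator, the classical criterion gives that $R$ is quasi-Frobenius.

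The main obstacle is this last finiteness step, namely passing from perfect to artinian; it is exactly what distinguishes the two classes, as the non-noetherian example above shows. My first choice is therefore to rely on Rutter's theorem to supply it. If the theorem's hypotheses need checking, I expect left perfectness together with the Kasch property to be precisely what is required.
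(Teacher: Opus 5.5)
Your primary route is exactly the paper's argument: you note that a $p$-quasi-Frobenius ring is two-sided perfect, two-sided $p$-injective and two-sided Kasch, and then invoke Rutter's Theorem~2, and the paper's proof consists of nothing more than that citation. Your contingency route would not stand on its own, though: besides the perfect-to-artinian gap you flag, its closing step (left artinian with every left ideal a left annihilator implies quasi-Frobenius) is false as stated, since the ring $\mathbb{F}[x;\sigma]/(x^2)$ of Example~\ref{ex:semiprimary_PRIR} is left artinian, has only the left ideals $0$, $\mathbb{F}x$, $R$ (each a left annihilator), and yet is not quasi-Frobenius.
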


Finally, we record a basic but useful closure property of the class of $p$-quasi-Frobenius rings. First we note that the infinite product of $p$-quasi-Frobenius rings need not be $p$-quasi-Frobenius. For example, the ring \(R = \prod_{n=1}^{\infty} \mathbb{Z}/2\mathbb{Z}\) fails the descending chain condition on principal ideals, and thereby cannot be perfect despite being a product of quasi-Frobenius (hence p-quasi-Frobenius) rings. For finite products we have the following.

\begin{proposition}\label{prop:product-pQF}
    The class of $p$-quasi-Frobenius rings is closed under finite direct products. That is, if $R_1,\ldots,R_n$ are $p$-quasi-Frobenius rings, then the direct product $R=R_1\times\cdots\times R_n$ is again $p$-quasi-Frobenius.
\end{proposition}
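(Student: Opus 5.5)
By induction on $n$ it suffices to treat $n=2$, and in fact the argument works uniformly for any finite $n$. Write $R=R_1\times\cdots\times R_n$ with central idempotents $e_1,\dots,e_n$, so that every element is a tuple $a=(a_1,\dots,a_n)$ and every one-sided ideal of $R$ decomposes as a product of one-sided ideals of the factors. The plan is to check the two defining conditions of Definition~\ref{def:p-quasi-Frobenius} separately, namely two-sided perfectness and two-sided $p$-injectivity, and to show that each is inherited componentwise.

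\emph{Perfectness.} We use the characterization of Proposition~\ref{prop:endo_noether_artin}(2): $R$ is left perfect if and only if it satisfies DCC on principal right ideals. A principal right ideal of $R$ has the form $aR=a_1R_1\times\cdots\times a_nR_n$, so a descending chain $a^{(1)}R\supseteq a^{(2)}R\supseteq\cdots$ gives descending chains $a^{(k)}_iR_i$ in each factor. Each of these stabilizes because each $R_i$ is left perfect. Since there are only finitely many factors, we can take the maximum of the stabilization indices, and the chain in $R$ stabilizes. The same argument applied to principal left ideals gives right perfectness. (Alternatively, we can note that $J(R)=\prod J(R_i)$ is left/right $T$-nilpotent and $R/J(R)=\prod R_i/J(R_i)$ is semisimple, and then apply Bass's theorem.) Finiteness of the product is used exactly here; the example $\prod_{\mathbb N}\mathbb Z/2\mathbb Z$ shows it cannot be dropped.

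\emph{$p$-injectivity.} We use the double-annihilator criterion \cite{nicholson1995principally}*{Lemma 1.1}, namely that $R$ is right $p$-injective if and only if $\rann(\lann(a))=aR$ for all $a\in R$. Annihilators in a product are computed componentwise: $\lann_R(a)=\prod_i\lann_{R_i}(a_i)$, and likewise $\rann_R$ of a product of left ideals is the product of the $\rann_{R_i}$. Hence
\[
\rann_R(\lann_R(a))=\prod_i \rann_{R_i}(\lann_{R_i}(a_i))=\prod_i a_iR_i=aR .
\]
The left-hand version, $\lann(\rann(a))=Ra$, follows symmetrically. Alternatively, one can extend maps directly: a homomorphism $f:aR\to R$ restricts to maps $e_if:a_iR_i\to R_i$, each of which extends to $R_i$, and these assemble into an extension $R\to R$.

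No step is a genuine obstacle. The only point needing care is justifying that annihilators and principal ideals split along the central idempotents, which follows from $xe_i=e_ix$ and the orthogonality of the $e_i$. Combining the two parts, $R$ is two-sided perfect and two-sided $p$-injective, hence $p$-quasi-Frobenius.
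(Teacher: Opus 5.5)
Your proof is correct and has the same two-part structure as the paper's: two-sided perfectness and two-sided $p$-injectivity are each inherited by finite products. Where the paper simply cites closure of perfect rings under finite products and \cite{nicholson1995principally}*{Example 2.1(2)}, you verify both facts componentwise. One harmless slip: by \cite{nicholson1995principally}*{Lemma 1.1}, right $p$-injectivity corresponds to $\lann(\rann(a))=Ra$, not $\rann(\lann(a))=aR$, but since you establish both identities the conclusion is unaffected.
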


\begin{proof}
    Since each $R_i$ is two-sided endoartinian, equivalently left and right perfect, their finite direct product $R$ is again left and right perfect by the standard closure of perfect rings under finite products. Moreover, the finite direct product of $p$-injective rings is $p$-injective on both sides by \cite{nicholson1995principally}*{Example 2.1(2)}. Hence $R$ is two-sided endoartinian and two-sided $p$-injective and hence is $p$-quasi-Frobenius.
\end{proof}

\section{$p$-Johns Rings and Faith-Menal Conjecture}

We recall that a right noetherian ring in which every right ideal is an annihilator ideal is called a Johns ring. Equivalently, Johns rings are noetherian dual rings in the sense of Kaplansky \cite{kaplansky1948dual}. As in the case of quasi-Frobenius rings, we now consider a principal analogue of Johns rings by introducing $p$-Johns rings. We first introduce the notion of $p$-dual rings, principal analogue of dual rings. 

\begin{definition}\label{def:p-dual}
    A ring $R$ is called \emph{right $p$-dual} (\emph{left $p$-dual}) if every principal right (resp. left) ideal of $R$ is the right (resp. left) annihilator of an element of $R$. $R$ is called \emph{strongly right $p$-dual} (\emph{strongly left $p$-dual}) if $\mathbb{M}_n(R)$ is right (resp. left) $p$-dual for every $n\ge 1$. $R$ is called (strongly) $p$-dual if it is both right and left (resp. strongly) $p$-dual.
\end{definition}

We first observe that right $p$-dual rings need not be left $p$-dual as is shown by the following example.

\begin{example}\label{ex:right-pdual-not-left-pdual}
Let $S=\Bbbk (x)$ be the field of rational functions over $\Bbbk$ and $\sigma:S\to S$ defined by $\sigma(f(x))=f(x^2)$ be the Frobenius monomorphism. Since $\sigma(S)=\Bbbk (x^2)\subsetneq S$, $\sigma$ is injective but not surjective.

Let $M=S$ as an additive group. We give $M$ the structure of an $(S,S)$-bimodule by defining $m\cdot s=ms$, and $s\cdot m=\sigma(s)m$ for all $s\in S$ and $m\in M$. Consider the split extension $R=S\ltimes M$, whose underlying additive group is $S\oplus M$ and whose multiplication is given by $(s_1,m_1)(s_2,m_2) = (s_1s_2,\sigma(s_1)m_2+m_1s_2)$.  Let $J=0\oplus M$. Then $J^2=0$ and $R/J\cong S$ is a field. Hence $J=J(R)$.

We first show that $R$ is right $p$-dual. Let $(0,m)\in J$ with $m\neq0$. Then $(0,m)(s,n)=(0,ms)$ for every $(s,n)\in R$. Since $S$ is a field, $mS=M$, and therefore $(0,m)R=J$. On the other hand, every element outside $J$ is a unit, so every principal right ideal is one of $0, J, R.$ Moreover, $\rann(1,0)=0$, $\rann(0,m)=J$ for  $m\neq0$, and $\rann(0,0)=R.$ Hence every principal right ideal is the right annihilator of an element, and therefore $R$ is right $p$-dual.

However, \((s,n)(0,m)=(0,\sigma(s)m)\), so \(R(0,m)=0\oplus \sigma(S)m\). Since \(\sigma(S)=\Bbbk (x^2)\subsetneq S\), the left ideal \(K=R(0,m)\) is a proper nonzero principal left ideal contained in \(J\). Finally, \(\lann(1,0)=0\), \(\lann(0,m)=J\) for \(m\neq0\), and \(\lann(0,0)=R\), so the only left annihilators are \(0\), \(J\), and \(R\). Since \(K\notin\{0,J,R\}\), the principal left ideal \(K\) cannot be the left annihilator of any element. Consequently, \(R\) is not left \(p\)-dual.

\end{example}

Next, we show that the concepts of dual rings and $p$-dual rings are logically independent, that is, neither implies the other, in general even in commutative case.

\begin{example}\label{ex:dual-pdual-independent}
\begin{enumerate}
    \item Let $R=\prod_{i=1}^{\infty} \Bbbk$ be the direct product of countably many copies of the field with two elements. Every element of $R$ is an idempotent and for any idempotent $e$, the principal ideal $eR=\ann(1-e)$. Thus, $R$ is $p$-dual. 

    Consider $I = \bigoplus_{i=1}^{\infty} \Bbbk$, the ideal of sequences in $R$ with finite support. First note that $\ann(I) = 0$. Indeed, if $r = (r_i) \neq 0$, choose an index $j$ with $r_j = 1$, and let $e_j \in I$ be the sequence with a $1$ in the $j^\textnormal{th}$ position and $0$ elsewhere. Then $r e_j \neq 0$. Now, if $I$ were a annihilator, there would exist a left ideal $L$ such that $I = \ann(L)$. Then $IL = 0$. In particular, $e_j L = 0$ for every standard basis vector $e_j \in I$, which forces every element of $L$ to have zero in every coordinate; hence $L = 0$. But then $I = \ann(0) = R$, contradicting the fact that $I \neq R$. Therefore $I$ is not a annihilator, so $R$ is not dual.

    \item Let $\Bbbk$ be a field and set $R=\Bbbk[x,y]/(x^2,y^2)$, which is a 4-dimensional local Frobenius algebra with basis $\{1,x,y,xy\}$. As a Frobenius algebra, $R$ is quasi-Frobenius and hence a dual ring. We will show later that $R$ is not $p$-dual (see Remark \ref{rem:converse-pdual-principal-injective}). 
\end{enumerate}
\end{example}

\begin{remark} \label{rem:p-dual_not_dual}
    Hajarnavis and Norton \cite{hajarnavis1985dual} note that dual rings satisfy several strong structural properties. In contrast, $p$-dual rings are much wilder. None of the classical properties of dual rings carry over automatically. For example, the $p$-dual ring $R$, considered in Example \ref{ex:dual-pdual-independent}(1), is not semiperfect since its Jacobson radical is zero, so $R/J\cong R$ is not semisimple artinian. Moreover, $R$ is not uniserial. Indeed, $R$ contains incomparable principal ideals, for example, the ideals generated by orthogonal idempotents with disjoint supports. Also, the free module $R$ itself which is finitely generated contains an infinite direct sum of nonzero ideals (for example, generated by orthogonal idempotents $e_i$ with disjoint supports). Hence $R$ has infinite uniform dimension as an $R$-module. Finally, in any two-sided dual ring, a classical result implicit in the work of Hajarnavis and Norton asserts that $\soc(R_R)=\lann(J(R))=\rann(J(R))=\soc(_R R)$. This fails in $R$ since $J(R)=0$, so $\lann(J(R))=R$ but $\soc(R_R)=\oplus_{i=1}^{\infty} \Bbbk$ which is a proper ideal.
\end{remark}

We further remark that $p$-dual rings are dual of principally projective or Rickart rings (that is, rings in which every principal right ideal is projective) \cite{hattori1960foundation}. Before we formally introduce $p$-Johns rings, we give some basic properties of right (left) $p$-dual rings. Although the results hold for both right as well as left $p$-dual rings, we only state the results for right $p$-dual rings. Our first result deal with arbitrary product of right $p$-dual rings.

\begin{proposition} \label{prop:direct-product-pdual}
    The direct product of right $p$-dual rings is again right $p$-dual.
\end{proposition}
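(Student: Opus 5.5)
The plan is to work coordinatewise. Let $\{R_\lambda\}_{\lambda\in\Lambda}$ be an arbitrary family of right $p$-dual rings and set $R=\prod_{\lambda\in\Lambda}R_\lambda$. Multiplication in $R$ is componentwise, so both principal right ideals and right annihilators of single elements should decompose as products of the corresponding objects in the factors. The proof then reduces to matching these two descriptions component by component.

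\textbf{Step 1.} I first establish two identities for $a=(a_\lambda)\in R$ and $b=(b_\lambda)\in R$, namely
\[
aR=\prod_{\lambda}a_\lambda R_\lambda \qquad\text{and}\qquad \rann_R(b)=\prod_{\lambda}\rann_{R_\lambda}(b_\lambda).
\]
For the first identity, the inclusion $\subseteq$ holds because $ar=(a_\lambda r_\lambda)$. For $\supseteq$, take an element $(a_\lambda r_\lambda)$ of the product and choose one $r_\lambda$ for each coordinate; the resulting tuple $(r_\lambda)\in R$ satisfies $a(r_\lambda)=(a_\lambda r_\lambda)$. This choice is the only place where the unrestricted product enters. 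The second identity holds because $br=0$ exactly when $b_\lambda r_\lambda=0$ for every $\lambda$.

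\textbf{Step 2.} Since each $R_\lambda$ is right $p$-dual, for every $\lambda$ I choose $b_\lambda\in R_\lambda$ with $a_\lambda R_\lambda=\rann_{R_\lambda}(b_\lambda)$. Put $b=(b_\lambda)\in R$. By Step 1,
\[
aR=\prod_{\lambda}a_\lambda R_\lambda=\prod_{\lambda}\rann_{R_\lambda}(b_\lambda)=\rann_R(b).
\]
Hence every principal right ideal of $R$ is the right annihilator of a single element, so $R$ is right $p$-dual.

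The main point needing care is the equality $aR=\prod_\lambda a_\lambda R_\lambda$, which could fail for restricted products or direct sums but holds for full products because the multipliers $r_\lambda$ can be chosen freely in every coordinate. Both Step 1 and Step 2 use the axiom of choice when the index set is infinite. The left-hand version follows by the symmetric argument with left ideals and left annihilators.
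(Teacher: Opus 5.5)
Your proposal is correct and follows the paper's proof essentially verbatim: you choose $b_\lambda$ with $a_\lambda R_\lambda=\rann_{R_\lambda}(b_\lambda)$ in each coordinate and observe that $\rann_R(b)=\prod_\lambda \rann_{R_\lambda}(b_\lambda)=\prod_\lambda a_\lambda R_\lambda=aR$. Your explicit check of $aR=\prod_\lambda a_\lambda R_\lambda$ fills in a step the paper uses without comment, and your annihilator condition $b_\lambda r_\lambda=0$ is the correct one, whereas the set displayed in the paper's proof writes $x_ib_i=0$.
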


\begin{proof}
    Consider $R=\prod_{i \in I} R_i$, where each $R_i$ is right $p$-dual. Let $a=(a_i) \in R$. For each $i$, by right $p$-duality of $R_i$, there exists $b_i \in R_i$ such that $a_i R_i=\rann_{R_i}(b_i)$. Set $b=(b_i) \in R$. Then $\rann_R (b)=\{(x_i) \in R \ | \ x_i b_i=0  \ \text{for all} \ i\}=\prod_i \rann_{R_i} (b_i)=\prod_i a_i R_i=aR$. Hence $aR=\rann_R(b)$. Thus, $R$ is right $p$-dual.
\end{proof}

\begin{proposition}\label{prop:pdual-principal-injective}
If $R$ is right $p$-dual, then $R$ is left $p$-injective. In particular, every two-sided $p$-dual ring satisfies $\soc(R_R)=\soc({}_RR)$.
\end{proposition}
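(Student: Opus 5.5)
The proof will go through the double-annihilator characterization of $p$-injectivity in \cite{nicholson1995principally}*{Lemma 1.1}. In the convention of that lemma, $R$ is left $p$-injective if and only if $\rann(\lann(a))=aR$ for every $a\in R$, and right $p$-injective if and only if $\lann(\rann(a))=Ra$. So the first assertion reduces to checking one equality for principal right ideals.

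\emph{Step 1: left $p$-injectivity.} Fix $a\in R$. The inclusion $aR\subseteq \rann(\lann(a))$ always holds. For the reverse inclusion, I use right $p$-duality to choose $b\in R$ with $aR=\rann(b)$. Then $a\in\rann(b)$ gives $ba=0$, that is, $b\in\lann(a)$. Since $\rann$ reverses inclusions,
\[
\rann(\lann(a))\subseteq \rann(b)=aR .
\]
Hence $\rann(\lann(a))=aR$ for every $a$, and $R$ is left $p$-injective. By the left--right symmetric argument, a left $p$-dual ring is right $p$-injective. Consequently a two-sided $p$-dual ring satisfies both $\rann(\lann(a))=aR$ and $\lann(\rann(a))=Ra$ for all $a\in R$.

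\emph{Step 2: the socles.} I will show $\soc(R_R)\subseteq\soc({}_RR)$; the reverse inclusion then follows by symmetry. It suffices to show that whenever $kR$ is a simple right ideal, $Rk$ is a simple left ideal.

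Let $0\neq rk\in Rk$. Left multiplication by $r$ gives a surjection $kR\to rkR$, and $rkR\neq 0$. Since $kR$ is simple, this map is an isomorphism. Injectivity means that $kx\neq0$ implies $rkx\neq0$, so $\rann(rk)=\rann(k)$. Applying $\lann$ and using right $p$-injectivity from Step 1,
\[
Rrk=\lann(\rann(rk))=\lann(\rann(k))=Rk .
\]
Thus every nonzero element of $Rk$ generates all of $Rk$, so $Rk$ is simple. Therefore $\soc(R_R)\subseteq\soc({}_RR)$, and equality follows by symmetry.

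The main point requiring care is Step 2. One has to pass from an equality of right ideals $rkR=kR$ to an equality of right annihilators $\rann(rk)=\rann(k)$, since a mere equality of right ideals only gives equality of left annihilators. Simplicity of $kR$ is exactly what makes left multiplication by $r$ injective, and this is the step to verify first.
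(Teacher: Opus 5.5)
Your proof is correct. Step~1 is exactly the paper's argument: choose $t$ with $aR=\rann(t)$, note that $t\in\lann(a)$, and conclude $\rann(\lann(a))\subseteq\rann(t)=aR$.

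The difference lies in the socle statement. The paper observes that a two-sided $p$-dual ring is two-sided $p$-injective and then cites a known result (attributed to Lam) that such rings satisfy $\soc(R_R)=\soc({}_RR)$. You instead prove that fact directly by the standard mininjectivity argument: left multiplication by $r$ is an isomorphism $kR\to rkR$, so $\rann(rk)=\rann(k)$, and the double-annihilator identity then gives $Rrk=Rk$.

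Your route is self-contained and yields a finer one-sided statement. The inclusion $\soc(R_R)\subseteq\soc({}_RR)$ uses only $\lann(\rann(k))=Rk$ for elements $k$ with $kR$ simple. Hence a left $p$-dual ring already satisfies it, and symmetrically a right $p$-dual ring satisfies $\soc({}_RR)\subseteq\soc(R_R)$. The paper's citation is shorter but hides this.

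Two small points:
\begin{enumerate}
\item Your reduction to ``$kR$ simple $\Rightarrow$ $Rk$ simple'' silently uses that $\soc({}_RR)$ is a two-sided ideal. That is what lets you pass from $k\in\soc({}_RR)$ to $kR\subseteq\soc({}_RR)$.
\item In your closing remark, $rkR$ is only isomorphic to $kR$, not equal to it. Your actual argument correctly uses only the isomorphism.
\end{enumerate}
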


\begin{proof}
Let $a\in R$. By the right $p$-dual property, there exists an element $t\in R$ such that $aR=\rann(t)=\{x\in R\mid tx=0\}$. Take any $y\in aR$. Then $y=ar$ for some $r\in R$. Since $a\in aR=\rann(t)$, we have $ta=0$. Hence $ty=t(ar)=(ta)r=0$. Thus $t\in\lann(aR)$ and therefore $\rann(\lann(aR))\subseteq\rann(t)=aR$. The reverse inclusion is immediate. Hence $\rann(\lann(aR))=aR$ and consequently $R$ is left $p$-injective.

If $R$ is two-sided $p$-dual, then $R$ is both left and right $p$-injective. Therefore, by \cite{lam1999lectures}*{Exercise 13.27}, $\soc(R_R)=\soc({}_RR)$.
\end{proof}

We note that the converse of Proposition~\ref{prop:pdual-principal-injective} is not true, in general.

\begin{remark}\label{rem:converse-pdual-principal-injective}
    Consider the ring in Example \ref{ex:dual-pdual-independent}(2). Its unique maximal ideal is $J = (x, y)$, with $\dim_\Bbbk(J) = 3$ and basis $\{x, y, xy\}$, and its socle is $\soc(R) = (xy)$, which is $1$-dimensional. Since $R$ is a finite-dimensional commutative algebra with a $1$-dimensional socle, it is a Frobenius algebra, hence quasi-Frobenius (indeed, a 0-dimensional Gorenstein ring). In particular, $R$ is self-injective and therefore both left and right $p$-injective.

    We show that $R$ is not right $p$-dual for if $R$ were right $p$-dual, then for every $a \in R$, there exists $b \in R$ such that $aR = \ann(b).$ Since $R$ is quasi-Frobenius, taking annihilators on both sides yields $\ann(a) = \ann(aR) = \ann(\ann(b)) = bR,$ which is a principal ideal. That is, the annihilator of every element would have to be a principal ideal.

    Consider the element $xy \in R$. Its annihilator is $\ann(xy) = \{ r \in R \mid rxy = 0 \} = (x, y) = J$, because $x(xy) = x^2 y = 0$ and $y(xy) = x y^2 = 0$, while any element not in $J$ is a unit and cannot annihilate $xy$. However, $J$ is not a principal ideal. Indeed, for any nonzero $f \in J$, the principal ideal $(f)$ is contained in $J$ and, since $J^2 = (xy)$, it is spanned by $f$ and $xy$; hence $\dim_\Bbbk(fR) \le 2$. But $\dim_\Bbbk(J) = 3$. Therefore, $J$ cannot be generated by a single element. Consequently, $\ann(xy) = J$ is not a principal ideal, so $(xy)R$ cannot be expressed as the right annihilator of any single element. Hence $R$ is not right $p$-dual (nor left $p$-dual, by commutativity).

    This example also highlights an important distinction: although $R$ is quasi-Frobenius and therefore $p$-quasi-Frobenius (being two-sided endoartinian and two-sided $p$-injective), it is not $p$-dual. Thus, the class of $p$-dual rings is strictly smaller than the class of $p$-quasi-Frobenius rings, even under the strongest possible finiteness and injectivity assumptions.
\end{remark}

As an analogue of Johns rings and strongly Johns rings, we now introduce the notion of $p$-Johns rings and strongly $p$-Johns rings.

\begin{definition}\label{def:p-Johns}
    A ring is called \emph{right $p$-Johns} (\emph{left $p$-Johns}) if it is right (resp. left) endonoetherian and right (resp. left) $p$-dual. Moreover, A right (left) $p$-Johns ring $R$ is called \emph{strongly right $p$-Johns} (\emph{resp. strongly left $p$-Johns}) if $\mathbb{M}_n(R)$ is right (resp. left) $p$-Johns for every $n\ge 1$. A ring is called $p$-Johns (strongly $p$-Johns) if it is both right and left $p$-Johns (resp. strongly $p$-Johns).
\end{definition}

We remark that a right $p$-Johns ring need not be left $p$-Johns. Recall that the ring $R$ constructed in Example ~\ref{ex:right-pdual-not-left-pdual} is right $p$-dual but not left $p$-dual. It can be seen that $R$ is endonoetherian. To see this, we first note that the principal right and left annihilators of $R$ are precisely $0$, $J$, and $R$. Hence both the ascending chains of principal right annihilators and principal left annihilators stabilize. Hence, the ring is both right and left endonoetherian. Hence $R$ is right $p$-Johns but not left $p$-Johns.

In view of the examples in \cite{gouaid2020endo-noetherian} showing that endonoetherianity does not imply noetherianity in general, the following proposition provides a natural setting in which this implication does hold: namely, a principal right ideal right $p$-Johns ring is right noetherian, and hence right Johns. 

\begin{proposition} \label{prop:endonoetherian-noetherian}
    A right \( p \)-Johns principal right ideal ring is right noetherian.
\end{proposition}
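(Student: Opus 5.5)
Let $R$ be a right $p$-Johns principal right ideal ring; the plan is to reduce the ascending chain condition on all right ideals to the ascending chain condition on principal right annihilators, which is exactly right endonoetherianity by Proposition~\ref{prop:endo_noether_artin}(1). First, since $R$ is a principal right ideal ring, every right ideal $I$ of $R$ has the form $I=aR$ for some $a\in R$. Second, since $R$ is right $p$-dual (Definition~\ref{def:p-dual}), each principal right ideal $aR$ is the right annihilator of some element: there is $b\in R$ with $aR=\rann(b)$. Combining these, every right ideal of $R$ is a principal right annihilator $\rann(b)$.

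Now take an arbitrary ascending chain $I_1\subseteq I_2\subseteq\cdots$ of right ideals of $R$. For each $n$, I choose $a_n$ with $I_n=a_nR$, and then $b_n$ with $a_nR=\rann(b_n)$. The chain becomes $\rann(b_1)\subseteq\rann(b_2)\subseteq\cdots$, an ascending chain of principal right annihilators. Because $R$ is right endonoetherian, Proposition~\ref{prop:endo_noether_artin}(1) gives an $n_0$ with $\rann(b_n)=\rann(b_{n+1})$ for all $n\ge n_0$. Hence $I_n=I_{n+1}$ for $n\ge n_0$, and $R$ satisfies ACC on right ideals, so $R_R$ is noetherian. (Alternatively, one can observe that a principal right ideal ring is trivially noetherian as soon as right ideals are finitely generated; but the point of the statement is to route through the hypotheses, and in fact any principal right ideal ring is already right noetherian, so the argument above is the intended one, and the $p$-Johns hypothesis is what identifies right ideals with annihilators.)

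For the remark preceding the statement, I will add that $R$ is then right Johns: it is right noetherian, and every right ideal $aR=\rann(b)=\rann(\{b\})$ is a right annihilator ideal. I expect no real obstacle. The only points needing care are the translation between the endomorphism-kernel formulation of endonoetherianity and ACC on principal right annihilators, which Proposition~\ref{prop:endo_noether_artin}(1) supplies, and making sure the side conventions match, namely that right $p$-duality yields right annihilators $\rann(b)$ rather than left ones.
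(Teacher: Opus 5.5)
Your proposal is correct and follows the paper's proof essentially verbatim: write each right ideal as $x_kR$, use right $p$-duality to rewrite it as a principal right annihilator, and apply ACC on principal right annihilators coming from right endonoetherianity. Your parenthetical remark is also correct: every principal right ideal ring is already right noetherian, since all its right ideals are cyclic and hence finitely generated, so the $p$-Johns hypothesis is not actually needed for this conclusion.
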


\begin{proof}
Let $R$ be a right \( p \)-Johns principal right ideal ring and let $I_1 \subseteq I_2 \subseteq I_3 \subseteq \cdots$ be an ascending chain of right ideals of \( R \). Since \( R \) is principal right ideal ring, each \( I_k \) is principal, say \( I_k = x_kR \).  Because \( R \) is right \( p \)-Johns, every principal right ideal is the principal right annihilator of some element. Thus for each \( k \) there exists \( a_k \in R \) such that $I_k = x_kR = \rann(a_k)$.

Hence the above chain becomes an ascending chain $\rann(a_1) \subseteq \rann(a_2) \subseteq \rann(a_3) \subseteq \cdots$ of principal right annihilators. Since $R$ is Right endonoetherian, this chain of principal right annihilators stabilizes. Hence \( R \) is right noetherian.
\end{proof}

\begin{proposition}\label{thm:pJohns-perfect}
    Every right \(p\)-Johns is right perfect.
\end{proposition}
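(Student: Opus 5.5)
The plan is to verify Bass's Theorem~P criterion \cite{lam2001firstcourse}*{Theorem~23.20}: $R$ is right perfect if and only if it satisfies DCC on principal \emph{left} ideals. The idea is to turn a descending chain of principal left ideals into an ascending chain of principal right annihilators. Right endonoetherianity (Proposition~\ref{prop:endo_noether_artin}(1)) controls the latter, and a double-annihilator identity coming from $p$-duality should let us return to the left ideals.

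\textbf{Step 1 (reduction to annihilators).} Let $Ra_1 \supseteq Ra_2 \supseteq \cdots$ be a descending chain of principal left ideals. Write $a_{i+1} = s_i a_i$ with $s_i \in R$. Then $a_i x = 0$ implies $a_{i+1}x = 0$, so
\[
\rann(a_1) \subseteq \rann(a_2) \subseteq \cdots
\]
is an ascending chain of principal right annihilators. Since $R$ is right endonoetherian, Proposition~\ref{prop:endo_noether_artin}(1) gives an $n_0$ with $\rann(a_n) = \rann(a_{n+1})$ for all $n \ge n_0$.

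\textbf{Step 2 (recovering $Ra_n$).} It suffices to show that $Ra = \lann(\rann(a))$ for every $a \in R$. Then
\[
Ra_n = \lann(\rann(a_n)) = \lann(\rann(a_{n+1})) = Ra_{n+1}
\]
for $n \ge n_0$, the chain stabilises, and Theorem~P finishes the proof. I would obtain this identity from right $p$-duality, in the spirit of Proposition~\ref{prop:pdual-principal-injective}.

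\textbf{Main obstacle.} Step~2 is where I expect the real work. The computation in Proposition~\ref{prop:pdual-principal-injective} produces the identity $\rann(\lann(a)) = aR$, which is the double annihilator on the right-ideal side, not the left-ideal identity $\lann(\rann(a)) = Ra$ needed here. So I would first try to extract the left identity directly. Fix $x \in \lann(\rann(a))$ and use the elements $b$ with $cR = \rann(b)$ attached to suitable principal right ideals $cR$ (for instance $c = a$, or $c$ built from $x$ and $a$) to force $x \in Ra$.

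If that fails, I would switch to the other form of Bass's theorem. The plan there is to show $R/J(R)$ is semisimple and $J(R)$ is left $T$-nilpotent. For the first part, right $p$-duality passes to principal right ideals, and the ACC on principal right ideals ($aR = \rann(b)$ together with endonoetherianity) rules out infinite sets of orthogonal idempotents modulo $J$. For the second part, given $x_1, x_2, \ldots \in J$, I would apply the same ACC to the chain $\rann(x_1) \subseteq \rann(x_2 x_1) \subseteq \cdots$, and then use $p$-duality to show that $x_k \cdots x_1 \ne 0$ for all $k$ would contradict Nakayama's lemma.
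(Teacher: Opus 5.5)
Your Step~2 is where the proof breaks, and no clever choice of auxiliary elements will repair it. The identity $\lann(\rann(a))=Ra$ for all $a$ is principal injectivity on the \emph{other} side from the one right $p$-duality supplies. As you observed, Proposition~\ref{prop:pdual-principal-injective} gives only $\rann(\lann(a))=aR$, and the identity you need genuinely fails for right $p$-Johns rings. The ring $R=S\ltimes M$ of Example~\ref{ex:right-pdual-not-left-pdual} is right $p$-dual with principal right annihilators $0,J,R$, hence right $p$-Johns. Yet for $a=(0,m)$ with $m\neq 0$ one has $\lann(\rann(a))=\lann(J)=J\supsetneq 0\oplus\sigma(S)m=Ra$; this is computed right after Theorem~\ref{thm:pJohns-pQF}. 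So the chain $\rann(a_1)\subseteq\rann(a_2)\subseteq\cdots$ from Step~1 does stabilise, but it gives no handle on the principal left ideals $Ra_n$.

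Your fallback has the same hole. Excluding infinite orthogonal sets of idempotents, even modulo $J$, does not make $R/J$ semisimple ($\mathbb{Z}$ has no nontrivial idempotents), so semilocality is not addressed. For $T$-nilpotency, put $c_n=x_n\cdots x_1$. Stabilisation of $\rann(c_n)$ only says that left multiplication by $x_{n+1}$ maps $c_nR$ isomorphically onto $c_{n+1}R$. To force $c_n=0$ you would need the inverse $c_{n+1}R\to c_nR$ to be left multiplication by some $t$, so that $(1-tx_{n+1})c_n=0$; that is again the missing identity. Nakayama does not help either, since $c_{n+1}R=x_{n+1}c_nR$ need not lie in $c_nJ$.

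Do not try to close the gap with the paper's route. The paper notes that right $p$-duality turns ACC on principal right annihilators into ACC on principal right ideals, then invokes ``Jonah's characterization'' in Theorem~P. But ACC on principal right ideals does not imply right perfectness ($\mathbb{Z}$ is a counterexample), and Jonah's result goes the other way: right perfect rings have ACC on principal right ideals.

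Even ACC on principal right ideals together with $\rann(\lann(a))=aR$ for all $a$ is not enough. The right Johns ring of \cite{faith1992counter} has both properties, since it is right noetherian and every right ideal is a right annihilator. It is not right artinian, hence not right perfect, because a right noetherian right perfect ring is semiprimary and therefore right artinian. So a correct proof must use, in an essential way, that each $aR$ is the annihilator of a \emph{single} element. Neither your proposal nor the paper's argument does this, so as it stands the proposition is not established.
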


\begin{proof}
    Let $R$ be a right \( p \)-Johns ring. Then $R$ is right \(p\)-dual. Hence every principal right ideal is a right annihilator of a single element. Since \(R\) is right endonoetherian, it satisfies the ACC on principal right annihilators. Since every principal right ideal is a principal right annihilator, $R$ has ACC on principal right ideals. By Jonah's characterization \cite{lam2001firstcourse}*{23.20(7), Bass Theorem P}, \(R\) is right perfect.
\end{proof}

We are, now, in a position to give and discuss the principal analogue of the Faith-Menal conjecture, namely, \emph{every strongly right Johns ring is quasi-Frobenius} (see \cite{faith1994structure}). This conjecture arose in response to counterexamples showing that right Johns rings need not be quasi-Frobenius (see \citelist{\cite{faith1992counter} \cite{nicholson2003quasi}*{Example~8.16}}). We consider a refinement of this conjecture based on chain conditions on principal annihilators.

\begin{remark}\label{rem:p-QF_not_strongly_p-dual} 
The $p$-quasi-Frobenius rings need not satisfy $p$-dual and hence the strongly $p$-dual condition, in general.

To see this, consider the $p$-quasi-Frobenius ring from Remark~\ref{rem:p-QF_ex_remark}. We show that the $2$-dimensional $\Bbbk$-vector space $x_1R=\Bbbk x_1\oplus \Bbbk \mathfrak{m}$ is not the annihilator of any single element of $R$. Let $b\in R$. If $b$ is a unit, then $\ann_R(b)=0$. Suppose now that $b$ is a non-unit. Since $R$ is local with Jacobson radical $J(R)=\bigoplus_{i=1}^{\infty}\Bbbk x_i\oplus \Bbbk \mathfrak{m}$, we have $b\in J(R)$. Write $b=\sum_{i\in F}\beta_i x_i+\gamma \mathfrak{m}$, where $F$ is finite. Choose $k\notin F$. Then $x_kb=0$, since $x_kx_i=0$ for all $i\neq k$ and $x_k\mathfrak{m}=0$. Hence $x_k\in \ann_R(b)$.
Moreover, $\mathfrak{m}b=0$, because $\mathfrak{m}x_i=0$ and $\mathfrak{m}^2=0$. Thus $\mathfrak{m}\in \ann_R(b)$ as well. Consequently, $\ann_R(b)$ contains infinitely many linearly independent elements $x_k$, and therefore $\ann_R(b)$ is infinite-dimensional over $\Bbbk$.

We have, thus, shown that for every $b\in R$, $\ann_R(b)=0$ or $\ann_R(b)$ is infinite-dimensional over $\Bbbk$. However, the principal ideal $x_1R$ has dimension $2$ over $\Bbbk$. Therefore, $x_1R\neq \ann_R(b)$ for all $b\in R$. Hence $R$ contains a principal ideal which is not the annihilator of any single element. Thus $R$ fails the $p$-dual condition, and consequently $R$ is not strongly $p$-dual.
\end{remark}

It is known that, in classical setting, a right Johns ring need not be right artinian (see \cite{faith1992counter}) and strongly right Johns rings are not known to be quasi-Frobenius (see \cite{faith1994structure}). In contrast, the principal analogue of Johns and quasi-Frobenius rings exhibits a much more rigid behavior, as is shown in the following theorem.

\begin{theorem}\label{thm:pJohns-pQF}
    Let $R$ be a two-sided $p$-Johns ring. Then $R$ is $p$-quasi-Frobenius. If, in addition, $R$ is a principal two-sided ideal ring, then $R$ is quasi-Frobenius.
\end{theorem}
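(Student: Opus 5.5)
The plan is to verify the two defining conditions of a $p$-quasi-Frobenius ring (Definition~\ref{def:p-quasi-Frobenius}) separately, using results already established, and then invoke Theorem~\ref{thm:PIR-pQF-QF} for the second assertion.

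\emph{Perfectness.} Since $R$ is right $p$-Johns, Proposition~\ref{thm:pJohns-perfect} shows that $R$ is right perfect. The left-hand version of that proposition applies verbatim to the left $p$-Johns hypothesis: left $p$-duality turns every principal left ideal into a principal left annihilator, and left endonoetherianity gives the ACC on these. Hence $R$ has the ACC on principal left ideals, and by the left version of Bass's Theorem~P (Jonah's condition) $R$ is left perfect. Thus $R$ is two-sided perfect, equivalently two-sided endoartinian by Proposition~\ref{prop:endo_noether_artin}(2).

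The only thing to double-check here is the side convention. The paper states that right endoartinian is equivalent to left perfect, so one must make sure that "perfect on both sides" is exactly what is needed; since we obtain both left and right perfectness, the mismatch of conventions is harmless.

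\emph{$p$-injectivity.} By Proposition~\ref{prop:pdual-principal-injective}, right $p$-duality gives left $p$-injectivity. Its mirror argument gives right $p$-injectivity from left $p$-duality. Concretely, given $a\in R$, write $Ra=\lann(t)$. Then $at=0$, so $t\in\rann(Ra)$ and $\lann(\rann(a))\subseteq\lann(t)=Ra$. This is the double annihilator condition $\lann(\rann(a))=Ra$, which characterizes right $p$-injectivity \cite{nicholson1995principally}*{Lemma 1.1}. Hence $R$ is two-sided $p$-injective, and together with the first step $R$ is $p$-quasi-Frobenius.

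\emph{The principal ideal case.} If moreover $R$ is a principal two-sided ideal ring, Theorem~\ref{thm:PIR-pQF-QF} upgrades $p$-quasi-Frobenius to quasi-Frobenius. Alternatively, one can argue directly: Proposition~\ref{prop:endonoetherian-noetherian} makes $R$ right noetherian, and every right ideal $xR$ equals $\rann(\lann(xR))$, which gives the Johns structure.

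The main obstacle is bookkeeping rather than substance. One must check that the left-handed analogues of Propositions~\ref{thm:pJohns-perfect} and~\ref{prop:pdual-principal-injective} really hold by symmetry, and that the annihilator orientation in the double annihilator characterization of right versus left $p$-injectivity matches the paper's definitions. I would write out the mirrored $p$-injectivity computation explicitly, as above, to avoid any side confusion.
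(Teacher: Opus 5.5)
\textbf{The perfectness step has a genuine gap.} Your $p$-injectivity step is correct; it is the paper's use of Proposition~\ref{prop:pdual-principal-injective} and its mirror. The reduction of the second assertion to Theorem~\ref{thm:PIR-pQF-QF} is also correct.

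Combining right $p$-duality with right endonoetherianity gives only the ACC on principal right ideals. An ACC on principal one-sided ideals says nothing about perfectness: it holds in every noetherian ring, and $\mathbb{Z}$ is noetherian but perfect on neither side. Bass's Theorem P characterizes right (left) perfectness by the DCC on principal left (right) ideals. Jonah's theorem goes the other way, from perfectness to an ACC on principal ideals, so it cannot be used to conclude perfectness. Proposition~\ref{thm:pJohns-perfect} is argued in exactly this way, so citing it and its mirror image does not close the gap. The underlying problem is that pairing the duality and the chain condition from the same one-sided $p$-Johns hypothesis yields only an ACC, never a DCC.

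\textbf{The fix is a cross-sided pairing.} You need to turn the ACC on principal annihilators into a DCC on principal ideals, using the double annihilator identities you already proved.
\begin{itemize}
\item Left $p$-duality gives right $p$-injectivity, $\lann(\rann(a))=Ra$. So $Ra\mapsto\rann(a)$ is an inclusion-reversing injection.
\item A strictly descending chain $Ra_1\supsetneq Ra_2\supsetneq\cdots$ would then give a strictly ascending chain $\rann(a_1)\subsetneq\rann(a_2)\subsetneq\cdots$. This contradicts right endonoetherianity.
\item Hence $R$ has the DCC on principal left ideals and is right perfect.
\item Symmetrically, $\rann(\lann(a))=aR$ (from right $p$-duality) together with left endonoetherianity gives the DCC on principal right ideals, i.e.\ left perfectness.
\end{itemize}
This is the paper's route; it quotes \cite{gera2026endoartinian}*{Theorem~2.14} for the passage from endonoetherian and $p$-injective to endoartinian. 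It also shows why both one-sided $p$-Johns hypotheses are needed: each half of the perfectness argument uses the duality from one side and the chain condition from the other.

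\textbf{On your alternative for the principal ideal case.} That argument only produces a right Johns ring. Right Johns rings need not be quasi-Frobenius, since Faith and Menal showed they need not even be right artinian. So you do need Theorem~\ref{thm:PIR-pQF-QF} there, as the paper uses it.
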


\begin{proof}
    Since $R$ is a two-sided $p$-Johns ring, $R$ is both right and left endonoetherian, as well as right and left $p$-dual. By Proposition~\ref{prop:pdual-principal-injective}, since $R$ is left $p$-dual, it is right $p$-injective. Being both right endonoetherian and right $p$-injective, $R$ is right endoartinian by \cite{gera2026endoartinian}*{Theorem~2.14}. Dualizing, since $R$ is right $p$-dual, Proposition~\ref{prop:pdual-principal-injective} implies that $R$ is left $p$-injective. Being both left endonoetherian and left $p$-injective, $R$ is left endoartinian by the left-sided analogue of \cite{gera2026endoartinian}*{Theorem~2.14}. Thus, $R$ is left and right endoartinian (hence left and right perfect) and left and right $p$-injective. Consequently, $R$ is $p$-quasi-Frobenius. The second assertion follows by invoking Theorem~\ref{thm:PIR-pQF-QF}.
\end{proof}

We remark that two-sidedness assumption in Theorem~\ref{thm:pJohns-pQF} is essential, that is, a one-sided right $p$-Johns ring need not be $p$-quasi-Frobenius, nor even left $p$-injective, even though it is both left and right endonoetherian. To see this, consider the ring $R$ in Example~\ref{ex:right-pdual-not-left-pdual}. We showed in Example \ref{ex:right-pdual-not-left-pdual} that $R$ is right $p$-dual but not left $p$-dual. We claim that $R$ is not right $p$-injective and hence not $p$-quasi-Frobenius. To see this, let $0\neq m\in M$ and set $a=(0,m)\in J$. Then $\rann_R(a)=\{(s,n)\in R \mid (0,m)(s,n)=0\}=\{(s,n)\in R \mid ms=0\}=J$, since $S$ is a field. Moreover, $\lann_R(\rann_R(a))=\lann_R(J)=\{(s,n)\in R \mid (s,n)J=0\}$. For any $(s,n)\in R$ and $(0,m')\in J$, we have $(s,n)(0,m')=(0,\sigma(s)m')$. This vanishes for all $m'\in M$ if and only if $\sigma(s)=0$, which, by injectivity of $\sigma$, forces $s=0$. Hence $\lann_R(J)=J$. However, the principal left ideal generated by $a$ is $Ra=R(0,m)=0\oplus \sigma(S)m$. Since $\sigma(S)=\Bbbk (x^2)\subsetneq S=\Bbbk (x)$, we have $\sigma(S)m \subsetneq M$, so $Ra$ is a proper nonzero subideal of $J$. Therefore, $\lann_R(\rann_R(a))=J\neq Ra$, which violates the standard characterization of right $p$-injective rings. Consequently, $R$ is not $p$-quasi-Frobenius.

We further note that the implication $p$-quasi-Frobenius $\implies$ strongly $p$-dual fails, in general (see Remark~\ref{rem:p-QF_not_strongly_p-dual}). Thus, the classes of $p$-quasi-Frobenius rings and strongly $p$-dual rings are genuinely distinct. 

We remark that Theorem~\ref{thm:pJohns-pQF} may be viewed as a principal analogue of the Faith--Menal conjecture. Within the class of principal two-sided ideal rings, the two-sided $p$-Johns condition already forces the classical quasi-Frobenius property.

\begin{corollary}\label{cor:PRIR-QF-pQF-pJohns}
    Let $R$ be a principal two-sided ideal ring. Then the following are equivalent:
    \begin{enumerate}
        \item $R$ is quasi-Frobenius;
        \item $R$ is $p$-quasi-Frobenius;
        \item $R$ is two-sided $p$-Johns.
    \end{enumerate}
\end{corollary}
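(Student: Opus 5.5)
The plan is to close the cycle of implications $(3)\Rightarrow(2)\Rightarrow(1)\Rightarrow(3)$. Here "principal two-sided ideal ring" is read as a ring that is both a principal right ideal ring and a principal left ideal ring. Two of the three implications are already available.

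\emph{$(3)\Rightarrow(2)$.} This is the first assertion of Theorem~\ref{thm:pJohns-pQF}, which does not even need the principal ideal hypothesis.

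\emph{$(2)\Rightarrow(1)$.} This is exactly Theorem~\ref{thm:PIR-pQF-QF}.

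\emph{$(1)\Rightarrow(3)$.} This is the only implication requiring a new argument. Suppose $R$ is quasi-Frobenius.
\begin{enumerate}
\item $R$ is two-sided noetherian, so it satisfies ACC on all one-sided ideals. In particular it satisfies ACC on principal right and on principal left annihilators. By Proposition~\ref{prop:endo_noether_artin}(1) and its left analogue, $R$ is right and left endonoetherian.
\item For right $p$-duality, take $a\in R$. A quasi-Frobenius ring is right self-injective, hence right $p$-injective. By \cite{nicholson1995principally}*{Lemma 1.1} this gives $aR=\rann(\lann(a))$.
\item Since $R$ is a principal left ideal ring, the left ideal $\lann(a)$ is principal, say $\lann(a)=Rb$. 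Then $\rann(Rb)=\rann(b)$, so $aR=\rann(b)$. Thus $R$ is right $p$-dual.
\item Symmetrically, left self-injectivity gives $Ra=\lann(\rann(a))$. The principal right ideal property gives $\rann(a)=cR$, whence $Ra=\lann(c)$. So $R$ is left $p$-dual.
\end{enumerate}
Combining these, $R$ is right and left $p$-Johns.

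The only delicate point is step 3 and its mirror. Remark~\ref{rem:converse-pdual-principal-injective} shows that a quasi-Frobenius ring need not be $p$-dual, precisely because annihilators of elements can fail to be principal. So the principal-ideal hypothesis on the \emph{opposite} side must be used at exactly this step: left principality yields right $p$-duality, and right principality yields left $p$-duality. Everything else is a direct citation of earlier results.
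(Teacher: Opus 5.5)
Your proof is correct and is essentially the paper's: for $(1)\Rightarrow(3)$ you, like the paper, write $aR=\rann(\lann(a))$ and use that the left ideal $\lann(a)$ is principal (the paper gets this double-annihilator identity from the dual-ring property plus self-injectivity instead), and the remaining implications are cited from Theorems~\ref{thm:PIR-pQF-QF} and~\ref{thm:pJohns-pQF}. One harmless slip: by Nicholson--Yousif's Lemma~1.1, it is left $p$-injectivity that gives $aR=\rann(\lann(a))$ and right $p$-injectivity that gives $Ra=\lann(\rann(a))$, so your side labels in steps 2 and 4 are swapped, which does not matter because a quasi-Frobenius ring is self-injective on both sides.
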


\begin{proof}
The equivalence $(1)\Leftrightarrow(2)$ follows from Theorem~\ref{thm:PIR-pQF-QF} and $(3)\Rightarrow(1)$ follows from Theorem~\ref{thm:pJohns-pQF}.

It remains to prove $(1)\Rightarrow(3)$. Assume that $R$ is quasi-Frobenius. Then $R$ is left and right noetherian and therefore, left and right endonoetherian. We show that $R$ is right $p$-dual. Let $aR$ be a principal right ideal of $R$. Since $R$ is quasi-Frobenius, it is a dual ring, so there exists a left ideal $A$ such that $aR=\rann(A)$. Since $R$ is self-injective, $A=\lann(\rann(A))=\lann(aR)$. As $R$ is a principal left ideal ring, there exists $t\in R$ such that $A=Rt$. Hence $aR=\rann(A)=\rann(Rt)=\rann(t)$. Thus $R$ is right $p$-dual. By symmetry, $R$ is left $p$-dual. Therefore $R$ is two-sided $p$-Johns.
\end{proof}

Note that the principal ideal hypothesis in Theorem~\ref{thm:pJohns-pQF} and Corollary~\ref{cor:PRIR-QF-pQF-pJohns} is essential. Without it, even strong annihilator-theoretic conditions do not force either $p$-quasi-Frobenius or quasi-Frobenius behavior. The following example exhibits a ring which is strongly two-sided $p$-dual but fails to be $p$-quasi-Frobenius and hence fails to be quasi-Frobenius. In particular, strong $p$-duality alone is far weaker than the two-sided $p$-Johns condition. This also shows that hypothesis in Theorem \ref{thm:pJohns-pQF} cannot be extended to $p$-dual rings.

\begin{example}\label{ex:strongly-pdual-not-pQF}
Let $R=\prod_{i=1}^{\infty}\Bbbk$. From Example \ref{ex:dual-pdual-independent}(1), $R$ is $p$-dual but not dual. We show that $R$ is strongly $p$-dual. For each $n\geq 1$, $\mathbb M_n(R)\cong \mathbb M_n\left(\prod_{i=1}^{\infty}\Bbbk \right) \cong \prod_{i=1}^{\infty}\mathbb M_n(\Bbbk)$. We first show that $\mathbb M_n(\Bbbk)$ is $p$-dual. 

Let $A=\mathbb M_n(\Bbbk)$ and let $aA$ be a principal ideal of $A$. Since $A$ is semisimple artinian, every ideal of $A$ is generated by an idempotent. Hence there exists an idempotent $e\in A$ such that $aA=eA$. It is known that $eA=\ann_A(1-e)$. Thus every principal ideal of $A$ is the annihilator of a single element, namely $1-e$. Therefore $A$ is $p$-dual. Thus, by Proposition~\ref{prop:direct-product-pdual} (in commutative case), $\mathbb M_n(R) \cong \prod_{i=1}^{\infty}\mathbb M_n(\Bbbk)$ is $p$-dual for every $n\geq 1$. Hence $R$ is strongly $p$-dual.

Next we claim that $R$ is not $p$-quasi-Frobenius. For each $k\geq 1$, define $e_k=(\underbrace{0,\cdots,0}_{k-1},1,1,1,\cdots)$. Then $e_kR= \{(a_i)\in R : a_1=\cdots=a_{k-1}=0\}$. Consequently, $e_1R \supseteq e_2R \supseteq e_3R \supseteq \cdots$. It can be seen that the inclusions are strict. To see this, let $u_k=(0,\cdots,0,1,0,0,\cdots)$, where the only nonzero coordinate occurs in position $k$. Since $u_k=e_k u_k$, we have $u_k\in e_kR$. However, $u_k\notin e_{k+1}R$, because every element of $e_{k+1}R$ has $k$-th coordinate equal to $0$. Hence $e_1R \supsetneq e_2R \supsetneq e_3R \supsetneq \cdots$ is a strictly descending chain of principal right ideals. Therefore $R$ is not right endoartinian. Since $R$ is commutative, it is also not left endoartinian. Thus $R$ is not two-sided endoartinian, and hence is not $p$-quasi-Frobenius.
\end{example}

\begin{remark}
The ring $R=\prod_{i=1}^{\infty}\Bbbk $ in Example \ref{ex:strongly-pdual-not-pQF} is not a principal ideal ring. Indeed, consider the ideal $I=\bigoplus_{i=1}^{\infty}\Bbbk$, consisting of all sequences with finite support. Suppose that $I=aR$ for some $a=(a_1,a_2,\cdots)\in R$. Since $e_i=(0,\cdots,0,1,0,\cdots)\in I=aR$ for every $i$, there exists $r^{(i)}\in R$ such that $e_i=ar^{(i)}$. Examining the $i$-th coordinate gives $1=(ar^{(i)})_i=a_i r^{(i)}_i$. Since $a_i,r^{(i)}_i\in\Bbbk$, we obtain $a_i=1$ for every $i$. Hence $a=(1,1,1,\cdots)=1$, and therefore $aR=R$, contradicting $I\neq R$. Thus the ideal $I$ is not principal.
\end{remark}

\section{A Reduction of Faith’s Conjecture via $p$-quasi-Frobenius Rings}

In this section, we consider the principal analogue of Faith's conjecture. We first give certain conditions under which Kasch rings are $p$-quasi-Frobenius. We remark that our proof mimics the proof of \cite{nicholson2003quasi}*{Lemma 5.21}.

\begin{theorem} \label{thm:semiprimary_PRIR}
    Let $R$ be a semiprimary principal right ideal ring. If $R$ is right injective and right Kasch, then $R$ is $p$-quasi-Frobenius. Moreover, if, in addition, $R$ is also a principal left ideal ring, then $R$ is quasi-Frobenius.
\end{theorem}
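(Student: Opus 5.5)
Since $R$ is right self-injective, it is in particular right $p$-injective. Semiprimary rings are two-sided perfect, so by Proposition~\ref{prop:endo_noether_artin}(2) $R$ is two-sided endoartinian. By Definition~\ref{def:p-quasi-Frobenius}, the first assertion therefore reduces to showing that $R$ is left $p$-injective, that is, $\lann(\rann(a))=Ra$ for every $a\in R$.

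\textbf{Step 1: right noetherianity.} A principal right ideal ring is right noetherian, since every right ideal is finitely generated. Thus $R$ is right noetherian and right self-injective, which by definition already makes $R$ quasi-Frobenius. From this one reads off two-sided artinianity, two-sided self-injectivity, and in particular left $p$-injectivity. Hence $R$ is $p$-quasi-Frobenius, and the second assertion, together with Theorem~\ref{thm:PIR-pQF-QF}, becomes automatic.

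\textbf{Step 2: a proof using the Kasch hypothesis.} Because Step 1 makes the Kasch and semiprimary hypotheses look redundant, I would also record an argument that uses them, following the pattern of \cite{nicholson2003quasi}*{Lemma 5.21}. Right self-injectivity gives the Ikeda--Nakayama identities: $\lann(T\cap T')=\lann(T)+\lann(T')$ for right ideals $T,T'$, and $\lann(\rann(L))=L$ for finitely generated left ideals $L$. In particular $\lann(\rann(Ra))=Ra$, which is exactly left $p$-injectivity.

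Next I would show that every right ideal is a right annihilator. Take a right ideal $T=tR$. If $\rann(\lann(T))\supsetneq T$, choose a maximal submodule of $\rann(\lann(T))$ containing $T$; this exists because $R$ is right noetherian. The right Kasch property embeds the resulting simple quotient into $R_R$. Using right injectivity, that embedding lifts to left multiplication by some $c$ with $c\,\rann(\lann(T))\neq0$ but $cT=0$. This is a contradiction, since $cT=0$ means $c\in\lann(T)$. So $R$ is a right annihilator ring, and right noetherian plus right $p$-injective with ACC on right annihilators gives two-sided perfectness and the left-hand conditions.

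\textbf{Main obstacle.} The step needing care is left $p$-injectivity. If the quoted Ikeda--Nakayama identities are not to be assumed, I would first prove $\lann(\rann(a))=Ra$ directly. Given $b\in\lann(\rann(a))$, the assignment $ar\mapsto br$ is a well-defined map $aR\to R$, because $ar=0$ implies $br=0$. By right self-injectivity this map extends to $R_R\to R_R$, that is, to left multiplication by some $c\in R$. Then $b=ca\in Ra$, which is precisely left $p$-injectivity and requires only right injectivity.
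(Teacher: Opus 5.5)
Your proposal is correct, and Step 1 by itself is a complete proof by a different and much shorter route than the paper's. In a principal right ideal ring every right ideal is finitely generated, so $R$ is right noetherian. Right noetherian plus right self-injective is quasi-Frobenius by \cite{lam1999lectures}*{Theorem 15.1}, which is literally the definition of quasi-Frobenius adopted in the paper's preliminaries. Since quasi-Frobenius rings are $p$-quasi-Frobenius, both assertions follow at once.

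The paper instead proves left $p$-injectivity directly. Assuming $\lann(a)\subseteq\lann(b)$ and $b\notin aR$, it takes a maximal submodule $M$ of $I=aR+bR$ containing $aR$ and embeds $I/M$ in $R_R$ by right Kasch. Right injectivity then extends this to left multiplication by some $c$, giving $ca=0\neq cb$, a contradiction. Perfectness comes from semiprimary, and the second assertion is routed through Theorem~\ref{thm:PIR-pQF-QF}.

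Your route buys brevity and avoids Theorem~\ref{thm:PIR-pQF-QF}, which the paper states without proof. It also exposes that the semiprimary, right Kasch and principal-left-ideal hypotheses are redundant: a right self-injective principal right ideal ring is already quasi-Frobenius. Conversely, the paper's Kasch argument never really uses that $I$ is principal, because right self-injectivity extends maps from any right ideal. So it yields left $p$-injectivity for every right self-injective right Kasch ring, hence $p$-quasi-Frobenius for semiprimary ones with no noetherian hypothesis. That is the only version of the argument with content beyond the classical theorem.

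Your Step 2 has the sides swapped, though this does not affect Step 1. By the paper's definition, $p$-injectivity of $R_R$ means that maps $aR\to R_R$ extend, which is equivalent to $\lann(\rann(a))=Ra$. Your ``main obstacle'' argument extends exactly such a map, so it proves \emph{right} $p$-injectivity, which you already had for free. The same goes for your remark that $\lann(\rann(Ra))=Ra$ ``is exactly left $p$-injectivity''. Left $p$-injectivity is $\rann(\lann(a))=aR$; the paper proves it in the form $\lann(a)\subseteq\lann(b)\Rightarrow b\in aR$. Your extension argument does not reach this condition, so the claim that it ``requires only right injectivity'' is unsupported. The remark following Proposition~\ref{prop:annihilator-endomorphisms-revised} also swaps the sides, but the definition and the proof of this theorem use the standard convention.

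Your Kasch paragraph, showing $\rann(\lann(T))=T$ for every right ideal $T$, is the genuine left-sided statement and is essentially the paper's argument. The closing appeal to ACC on annihilators to get two-sided perfectness is unnecessary, since semiprimary already gives it.
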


\begin{proof}
    We show that $R$ is left $p$-injective. Let $a, b\in R$ be such that $\lann(a)\subseteq \lann(b)$. We prove that $b \in a R$. Suppose, on the contrary, that $b\not\in aR$. Consider the right ideal $I=aR+bR$. Since $R$ is a principal right ideal ring, $I$ is principal. since $b\not\in aR$, the quotient $I/aR$ is nonzero. Choose a maximal submodule $M/aR$ of $I/aR$. Then $M$ is a submodule of $I$ such that $aR \subseteq M \subsetneq I$ and $I/M$ is a simple right $R$-module.

    Since $R$ is right Kasch, every simple right module embeds into $R_R$. Hence there exists a monomorphism $\sigma:I /M \hookrightarrow R_R$. Define a right $R$-module homomorphism $\gamma:I\to R$ by $\gamma(x)=\sigma(x+M)$. Since, $I$ is a principal right ideal and $R_R$ is right injective $\gamma$ extends to a $R_R$ extends to a homomorphism from $R_R$ to itself, that is, left multiplication by some element of $R$, that is, there exists $c \in R$ such that $\gamma(x)=cx$ for all $x \in I$. 

    Since $a\in M \subsetneq I$, we have $ca=\gamma(a)=\sigma(a+M)=\sigma(0)=0$. So, $c\in \lann(a)$. By our assumption $\lann(a)\subseteq \lann(b)$, it follows that $c\in \lann(b)$, hence $cb=0$. However, since $b\not\in M$, the element $b+M$ is nonzero in the simple module $I/M$. As $\sigma$ is injective, $\sigma(b+M)\neq 0$. Thus $cb=\gamma(b)=\sigma(b+M)\neq 0$, which contradicts that $cb=0$. Therefore, our supposition was false, and $b\in aR$. Thus $R$ is left $p$-injective and hence $R$ is $p$-injective on both sides.

    Since $R$ is semiprimary, it is left and right perfect. Therefore $R$ is $p$-quasi-Frobenius. If $R$ is also a principal left ideal ring, then Theorem~\ref{thm:PIR-pQF-QF} yields that $R$ is quasi-Frobenius.
\end{proof}

We remark that the first part of Theorem~\ref{thm:semiprimary_PRIR} provides an affirmative answer, within the class of two-sided principal ideal rings, to Question~3.1 (Q2) of Faith \cite{faith2002when}:

\begin{quote}
\emph{Question 3.1 (Q2). Is a right and left perfect right self-injective ring $R$ necessarily quasi-Frobenius?}
\end{quote}

and the second part establishes the validity of Faith's conjecture within the class of semiprimary principal ideal rings. We record this in the following corollary.

\begin{corollary}\label{cor:pir_kasch_qf}
    Let $R$ be a two-sided principal ideal ring which is right Kasch. Suppose that one of the following conditions holds:
    \begin{enumerate}
        \item $R$ is right and left perfect and right self-injective;
        \item $R$ is semiprimary and right self-injective.
    \end{enumerate}
    Then $R$ is quasi-Frobenius.
\end{corollary}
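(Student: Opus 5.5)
The plan is to reduce both cases to Theorem~\ref{thm:semiprimary_PRIR}, whose conclusion already contains the quasi-Frobenius assertion for rings that are principal on both sides. The hypotheses of that theorem are: semiprimary, principal right ideal ring, right injective, right Kasch, and (for the second part) principal left ideal ring. Since $R$ is assumed to be a two-sided principal ideal ring and right Kasch, the only thing to check in each case is that $R$ is semiprimary and right self-injective.

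\textbf{Case (2).} Here $R$ is semiprimary and right self-injective by hypothesis. Theorem~\ref{thm:semiprimary_PRIR} then applies verbatim and gives that $R$ is quasi-Frobenius.

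\textbf{Case (1).} Here $R$ is two-sided perfect and right self-injective, so we must upgrade two-sided perfectness to semiprimary. This is where a principal-ideal hypothesis enters. The first thing I would try is to show that $R$ is right noetherian: it is a principal right ideal ring, so every right ideal is finitely generated.
\begin{itemize}
\item For a perfect ring, $J=J(R)$ is left and right $T$-nilpotent, and $R/J$ is semisimple artinian.
\item Since $J_R$ is finitely generated, say $J=\sum_{i=1}^n a_iR$, right $T$-nilpotence of $J$ should force $J$ to be nilpotent. A finitely generated ideal in a right noetherian ring with $J$ nil is nilpotent by Levitzki's theorem ($T$-nilpotent implies nil).
\end{itemize}
So a right noetherian ring with nil radical has $J$ nilpotent; together with $R/J$ semisimple, this makes $R$ semiprimary. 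With that established, case (1) reduces to case (2).

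A cheaper route to the same conclusion is available: a right noetherian right self-injective ring is already quasi-Frobenius, by the definition recalled in the preliminaries. However, to stay within the paper's framework, I would present the argument through Theorem~\ref{thm:semiprimary_PRIR}, perhaps remarking on the shortcut. The only nontrivial step, and the main potential obstacle, is the passage from perfect to semiprimary in case (1), which relies on Levitzki's theorem. If citing Levitzki is undesirable, I would instead argue directly that $J^k=J^{k+1}$ for some $k$ via noetherianity, and then use Nakayama's lemma on the finitely generated module $J^k$, together with $T$-nilpotence, to get $J^k=0$.
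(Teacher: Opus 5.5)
Your proof is correct. Case (2) matches the paper exactly: the corollary is recorded there without a separate proof, as a direct consequence of Theorem~\ref{thm:semiprimary_PRIR}.

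For case (1) the routes differ. The paper treats two-sided perfectness as already covered by that theorem. This is justified only by looking inside the theorem's proof, where semiprimarity enters solely through ``$R$ is left and right perfect''. So right injectivity, right Kasch and principal right ideals give left $p$-injectivity, hence $p$-quasi-Frobenius, and Theorem~\ref{thm:PIR-pQF-QF} then upgrades this to quasi-Frobenius.

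You instead use the theorem as a black box and prove that a perfect principal right ideal ring is semiprimary (right noetherian, $J$ nil, Levitzki). That is the more faithful reduction, since the theorem is stated under semiprimarity.

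Your ``cheaper route'' is better still and entirely rigorous. A principal right ideal ring is right noetherian, and right noetherian plus right self-injective is precisely the definition of quasi-Frobenius recalled in the preliminaries. This proves the corollary in one line. It also bypasses Theorem~\ref{thm:PIR-pQF-QF}, which the paper states without proof. And it shows that the right Kasch hypothesis, the perfect/semiprimary hypothesis and left principality play no role in reaching the conclusion. You could simply lead with it.

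One correction to your fallback argument. Noetherianity is an ascending chain condition and does not make $J\supseteq J^2\supseteq\cdots$ stabilize: in the noetherian ring $\mathbb{Z}_{(p)}$ the powers $p^k\mathbb{Z}_{(p)}$ of the maximal ideal strictly decrease. What does work in case (1) is this: each $J^k$ is a principal right ideal, so the chain stabilizes by the DCC on principal right ideals coming from left perfectness (Proposition~\ref{prop:endo_noether_artin}(2)). Then $J^k=J^kJ$ with $J^k$ finitely generated, and Nakayama's lemma gives $J^k=0$.
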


We construct an example showing that, in Theorem~\ref{thm:semiprimary_PRIR}, the principal right ideal ring hypothesis cannot be omitted and right self-injectivity cannot be weakened to right $p$-injectivity. The construction generalizes the example of algebra of dual numbers in Remark~\ref{rem:chain_symmetry_semiprime} from the commutative case $\sigma = \id$ to a non-surjective endomorphism $\sigma$. It is precisely this non-surjectivity that produces a principal right ideal $xR=\sigma(\mathbb{F})x\subsetneq \mathbb{F} x$ which is not a right annihilator, breaking right $p$-duality while leaving right $p$-injectivity intact.

\begin{example} \label{ex:semiprimary_PRIR}
    Let $\mathbb{F}=\mathbb{K} (t)$ be the field of rational functions and let $\sigma:\mathbb{F}\to \mathbb{F}$ be the injective endomorphism defined by $\sigma(t)=t^2$. Consider the skew polynomial ring $R = \mathbb{F}[x;\sigma]/(x^2)$. Note that every element of $R$ has the form $a+bx$ with $a,b\in \mathbb{F}$, where $x^2=0$ and $xa=\sigma(a)x$ for all $a\in \mathbb{F}$.

    \begin{enumerate}
        \item \textit{Semiprimary.} Let $J$ be the Jacobson radical. Note that $J=\mathbb{F}x$. We have $J^2=0$ and $R/J\cong \mathbb{F}$, which is a field. Hence $R/J(R)$ is semisimple artinian and $J(R)$ nilpotent Jacobson radical, so $R$ is semiprimary.
    
        \item \textit{Right $p$-injective.} If $\alpha=a+bx$ with $a\neq0$, then $\alpha$ is a unit, so $\rann(\alpha)=0$ and $\lann(\rann(\alpha))=R=R\alpha$. If $\alpha=bx$ with $b\neq0$, then $\rann(\alpha) = \rann(bx)=\mathbb{F}x=J$. Consequently, $\lann(\rann(bx))=\lann(J)=J$. On the other hand $R(\alpha) = R(bx)=(\mathbb{F}+\mathbb{F}x)bx=\mathbb{F}x=J$. Thus $\lann(\rann(\alpha))=R\alpha$ for every $\alpha\in R$, and $R$ is right $p$-injective.
    
        \item \textit{Principal left ideal ring.} The left ideals of $R$ are $0$, $\mathbb{F}x$, and $R$, and each of these is generated by a single element. Hence $R$ is a principal left ideal ring.
    
        \item \textit{Not left $p$-injective.} Consider $\alpha=x$. Then $\lann(x)=\mathbb{F}x=J$. Hence $\rann(\lann(x))=\rann (J)=\mathbb{F}x$. However $xR=x\mathbb{F}=\{\sigma(v)x\mid v\in \mathbb{F}\}=\sigma(\mathbb{F})x$. Since $\sigma(\mathbb{F})=\Bbbk(t^2)\subsetneq \Bbbk(t)=\mathbb{F}$, we obtain $xR=\sigma(\mathbb{F})x\subsetneq \mathbb{F}x$. Hence $xR\neq \rann(\lann(x))$, so $R$ is not left $p$-injective. Hence $R$ is not $p$-quasi-Frobenius.

        \item \textit{Not a principal right ideal ring.} The Jacobson radical is $J=\mathbb{F}x=\{xv\mid v\in \mathbb{F}\}$. For any element $a=bx\in J$ we have $aR=(bx)R=\{bx(u+vx)\mid u,v\in \mathbb{F}\}=b\sigma(\mathbb{F})x$. Since $\sigma(\mathbb{F})\subsetneq \mathbb{F}$, the set $b\sigma(\mathbb{F})x$ is a proper subset of $\mathbb{F}x=J$. Thus no single element $a\in J$ satisfies $aR=J$. Hence $J$ is not a principal right ideal and $R$ is not a principal right ideal ring.

        \item \textit{Right Kasch.} The unique maximal right ideal of \(R\) is \(J=\mathbb F x\), since \(R/J\cong\mathbb F\) is a field. For \(a+bx\in R\) and \(c\in\mathbb F\), $(a+bx)(cx)=acx+bx\cdot cx=acx+b\sigma(c)x^2=acx.$ Hence \(a+bx\in\lann(J)\) if and only if \(ac=0\) for all \(c\in\mathbb F\), which forces \(a=0\). Thus $\lann(J)=J\neq 0,$ so \(R\) is right Kasch by \cite{nicholson2003quasi}*{Proposition 1.44}.

        \item \textit{Not quasi-Frobenius.} Let $K=xR=\sigma(\mathbb{F})x$. Then $\lann(K)=\mathbb{F}x$ and $\rann(\lann(K))=\rann(\mathbb{F}x)=x\mathbb{F}$. Since $\sigma(\mathbb{F})\subsetneq \mathbb{F}$, we have $K=\sigma(\mathbb{F})x\subsetneq x\mathbb{F}=\rann(\lann(K))$. Thus $\rann(\lann(K))$ $\neq K$. By the Ikeda-Nakayama characterization, $R$ is not quasi-Frobenius.
    \end{enumerate}
\end{example}

\begin{remark} \label{rem:prop22-example}
    Example~\ref{ex:semiprimary_PRIR} is a precise one-sided counterpart to Example~\ref{ex:right-pdual-not-left-pdual}.  Indeed, let \(R=\mathbb{F} [x;\sigma]/(x^2)\) with \(\mathbb{F}=\mathbb{K} (t)\) and \(\sigma(t)=t^2\), and let \(J=\mathbb{F} x\). As shown in Example~\ref{ex:semiprimary_PRIR}(3), the left ideals of \(R\) are \(0\), \(J\), and \(R\), and each is principal. Their left annihilators are respectively $\lann(1)=0$, $\lann(x)=J$ and $\lann(0)=R$. Hence the set of principal left ideals coincides exactly with the set of left annihilators of elements, so \(R\) is left \(p\)-dual.

    By Example~\ref{ex:semiprimary_PRIR}(5), the Jacobson radical \(J\) is not a principal right ideal. More decisively, the principal right ideal \(xR=\sigma(\mathbb{F})x=\mathbb{K}(t^2)x\) is a proper nonzero subideal of \(J\), since \(\sigma(\mathbb{F})\subsetneq \mathbb{F}\). However, as computed in the example, the only right annihilators in \(R\) are $\rann(\alpha)\in \{0,\ J,\ R\}$ for every \(\alpha\in R\). Since \(xR\notin\{0,J,R\}\), the principal right ideal \(xR\) cannot be expressed as the right annihilator of any element. Thus \(R\) is not right \(p\)-dual.

    This complements Example~\ref{ex:right-pdual-not-left-pdual}, where the split extension \(S\ltimes M\) was shown to be right \(p\)-dual but not left \(p\)-dual. Together, these two examples demonstrate that the \(p\)-dual property is genuinely one-sided: neither left \(p\)-duality nor right \(p\)-duality implies the other. Furthermore, the present example shows that even a semiprimary, right $p$-injective ring (see Example~\ref{ex:semiprimary_PRIR}(2)) need not be right \(p\)-dual, highlighting the sharp gap between $p$-injectivity and the full \(p\)-dual condition.

    This example also demonstrates the phenomenon described in Proposition~\ref{prop:annihilator-endomorphisms-revised}(3). Taking $g=\id_{R_R}$ and $T=xR$, the proposition yields the canonical inclusion $xR\subseteq\rann(\lann(xR))$. Indeed, $\lann(xR)=\lann(\sigma(\mathbb{F})x)=J$, while $\rann(J)=J$. Hence $xR=\sigma(\mathbb{F})x \subsetneq J= \rann(\lann(xR))=J$, so the inclusion is strict. Equivalently, the pointwise double annihilator condition $\rann(\lann(x))=xR$ fails. The obstruction is precisely the non-surjectivity of $\sigma$, which forces $\sigma(\mathbb{F})\subsetneq \mathbb{F}$, thereby producing the proper containment $xR=\sigma(\mathbb{F})x\subsetneq \mathbb{F}x=J$.
\end{remark}

\section{Perfectness of CF Rings}

Recall that a ring $R$ is \emph{right CF} (resp. \emph{right FGF}) if every cyclic (resp. finitely generated) right $R$-module embeds in a free module. In this section, our goal is to study principal analogue of the FGF conjecture (CF conjecture), namely, every right FGF ring (resp. CF ring) is $p$-quasi-Frobenius (resp. perfect). Before we discuss the principal analogue, we discuss the progress made in the case of CF conjecture and also give some results that help in our explorations. Note that if the CF conjecture holds, then, by the Faith–Walker Theorem \cite{faith1967direct-sum}, the rings are quasi-Frobenius, thus resolving the FGF conjecture. Also in the case CS (ACS) rings, it is known that right CS (resp. ACS) right CF rings are right artinian (see \cite{pardo1998torsionless}, \cite{li2013when}). One may ask what happens if we replace CS (ACS) condition with \emph{pure extending} condition, that is, every pure submodule is essential in the direct summand. Note that every CS module is pure extending but the converse is not true. ACS modules and pure extending modules, on the other hand, are completely independent, as is shown by the following example. 

\begin{example} \label{ex:ACS-PCS}
    \begin{enumerate}
        \item \emph{ACS ring but not pure extending}. Let $\mathbb{F}$ be a field with a proper subfield $\mathbb{K}$. Define the commutative von Neumann regular ring $R = \left\{ (a_n) \in \prod_{n=1}^\infty \mathbb{F} \;\middle|\; a_n \text{ is eventually in } \mathbb{K} \right\}.$ Let $S = \mathbb{M}_2(R)$ be the $2 \times 2$ matrix ring over $R$. Then $S$ is left ACS but not extending by \cite{li2013when}*{Example 2.17}. Since $S$ is von Neumann regular it is not pure extending by \cite{gera2025modules-2}*{Proposition 2.8}.

        \item \emph{Pure extending ring but not ACS}. Let $R=\begin{pmatrix}
\mathbb{Z} & \mathbb{Z}\\
0 & \mathbb{Z}
\end{pmatrix}$. The ring $R$ is right pure-extending (see \cite{gera2025modules-2}*{Example 2.2(2)}). We prove that it is not right ACS. First we note that the only non-trivial idempotent in $R$ are $e_n=\begin{pmatrix}
1 & n\\
0 & 0
\end{pmatrix};$
and $f_n=\begin{pmatrix}
0 & n\\
0 & 1
\end{pmatrix}$, where $n$ is any integer. Thus, every proper direct summand of $R_R$ is of the form $e_nR$ or $f_nR$ for some $n\in \mathbb{Z}$.

Now let $y=\begin{pmatrix}
2 & 1\\
0 & 0
\end{pmatrix}$. A direct computation gives $\rann_R(y)=
\left\{
\begin{pmatrix}
0 & v\\
0 & -2v
\end{pmatrix}
\;\middle|\; v\in \mathbb{Z}
\right\}$. We show that $\rann_R(y)$ intersects every proper direct summand trivially. Note that for any integet $n$,
$$e_nR=
\left\{
\begin{pmatrix}
a & an+b\\
0 & 0
\end{pmatrix}
\;\middle|\; a,b\in \mathbb{Z}
\right\}.$$ 
If an element of $e_nR$ lies in $\rann_R(y)$, then $a=0$, forcing $v=0$, so the element is zero. The case of $f_nR$ is similar. Thus $\rann_R(y)$ intersects every proper direct summand trivially; it is, therefore, not essential in any proper direct summand. Also $\rann_R(y)\neq R$, so it is not essential in $R$ itself. Hence $R$ is not right ACS. 
    \end{enumerate}
\end{example}

To discuss the CF conjecture in the case of pure extending modules, we need to overcome the gap between flatness and projectivity that arises when working with pure submodules. One possible way to do this is to have additional requirements of coherent and AIP conditions. The following theorem captures the synergy of coherence and AIP conditions providing the precise mechanism needed to force pure right ideals and hence the relevant annihilators to be direct summands

\begin{theorem} \label{thm:CF-PCS}
    Let \(R\) be a right AIP, right CF and right coherent ring. If \(R_R\) is pure-extending, then \(R\) is right perfect.
\end{theorem}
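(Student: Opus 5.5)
The plan is to verify Bass's criterion that $R$ is right perfect, namely $R/J(R)$ semisimple artinian and $J(R)$ left $T$-nilpotent. The mechanism is the one announced before the statement: coherence and AIP turn the annihilators produced by the CF hypothesis into direct summands.

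\emph{First step: summands from flatness.} If $I$ is an ideal, right AIP says $R/\rann(I)$ is flat. Take $I=RaR$; right CF embeds $R/aR$ into a free module $R^n$, so every right ideal $T$ satisfies $T=\rann(X)$ for a finite $X\subseteq R^n$. I would try to use right coherence to show that $R/\rann(I)$ is finitely presented whenever $I$ is finitely generated, say $I=x_1R+\cdots+x_kR$. Indeed $R/\rann(I)$ embeds in $R^k$ via $r\mapsto(x_1r,\dots,x_kr)$, so it is a cyclic submodule of a finitely presented module. Over a right coherent ring it is therefore finitely presented, so $\rann(I)$ is finitely generated. A finitely presented flat module is projective, so $R/\rann(I)$ is projective and $\rann(I)=eR$ for an idempotent $e$. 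When $I$ is not finitely generated, I would instead use that $\rann(I)$ is pure. Pure-extending then gives $\rann(I)\le_{e} eR$, and I would try to force equality using the closedness of annihilators coming from the CF embedding.

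\emph{Second step: the finiteness needed for semiperfectness.} From the summands $eR$ obtained this way, I would derive a chain condition on right annihilators of ideals. Each such annihilator is (essentially) generated by an idempotent, and the CF embedding $R/T\hookrightarrow R^n$ together with coherence should make these annihilators finitely generated. I would show that $R$ has no infinite set of orthogonal idempotents: if $\{e_i\}$ were such a set, then $\bigoplus e_iR$ is pure. By pure-extending it is essential in a summand, and comparing with a cyclic quotient embedded in some $R^n$ should give a contradiction. Together with right Kasch, which follows from right CF by embedding simples into $R^n$ and projecting, this should give that $R/J$ is semisimple and idempotents lift, so $R$ is semiperfect.

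\emph{Third step, the expected main obstacle: $T$-nilpotence of $J$.} Given a sequence $a_1,a_2,\dots\in J$, I would consider the ascending chain $\rann(a_1)\subseteq\rann(a_2a_1)\subseteq\cdots$ and try to stabilize it using the summand property from the first step. Near-stability $\rann(a_n\cdots a_1)=\rann(a_{n+1}\cdots a_1)$ combined with Nakayama (as $a_{n+1}\in J$) should then force $a_n\cdots a_1=0$. The difficulty is that these are annihilators of elements, not of ideals, so AIP does not directly apply. I would first try passing to the two-sided ideals $Ra_n\cdots a_1R$, or using the direct limit of the chain as a pure submodule; if that fails, I would fall back on Jonah's form of Theorem P, i.e.\ DCC on principal left ideals as in Proposition~\ref{thm:pJohns-perfect}.
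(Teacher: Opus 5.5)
Your Step~1 and the idempotent argument in Step~2 play the role of the paper's Lemma~\ref{lm:PDS} and Proposition~\ref{prop:noIdempotents}, and they can be repaired along those lines. Right CF embeds \emph{every} cyclic module in some $R^n$. So for any ideal $I$, the module $R/\rann(I)$ is finitely presented by coherence and flat by AIP, hence projective, and $\rann(I)=eR$. No finite generation of $I$ is needed. Two slips in your version:
\begin{itemize}
\item Your map $r\mapsto(x_1r,\dots,x_kr)$ has kernel $\rann\{x_1,\dots,x_k\}$. This equals $\rann(I)$ for $I=Rx_1+\cdots+Rx_k$, but not for $I=x_1R+\cdots+x_kR$.
\item ``Closedness of annihilators'' cannot turn $\rann(I)\le_e eR$ into equality. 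In the QF ring $\mathbb{Z}/4\mathbb{Z}$, the annihilator $2\mathbb{Z}/4\mathbb{Z}$ is essential in $R$ but is not $R$.
\end{itemize}
Applied to an arbitrary right ideal $T$, the same reasoning shows that $R/T$ is finitely presented, so $T$ is finitely generated. Your hypotheses therefore already force $R$ to be right noetherian. Consequently pure-extending and the absence of infinite orthogonal sets of idempotents are automatic. Chains such as $\rann(a_1)\subseteq\rann(a_2a_1)\subseteq\cdots$ stabilize for free. Since right noetherian right perfect rings are right artinian, the statement is equivalent to $R$ being right artinian.

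That is exactly the part your outline does not reach: neither ingredient of Bass's criterion is established.
\begin{itemize}
\item In Step~2, right Kasch plus the absence of infinite orthogonal idempotents does not make $R/J$ semisimple or let idempotents lift. Take $M=\bigoplus_p\mathbb{Z}/p\mathbb{Z}$. The trivial extension $\mathbb{Z}\ltimes M$ is commutative and Kasch: the maximal ideal $p\mathbb{Z}\ltimes M$ is the annihilator of a generator of the $p$-th summand. It has only trivial idempotents, yet $R/J\cong\mathbb{Z}$.
\item In Step~3, $\rann(b)=\rann(ab)$ with $a\in J$ only says that left multiplication by $a$ maps $bR$ isomorphically onto $abR$, and Nakayama yields nothing. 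For example, $\mathbb{Z}_{(p)}$ with all $a_i=p$ has $\rann(p^n)=0$ for every $n$, yet $p^n\neq0$.
\end{itemize}
In fact $\mathbb{Z}_{(p)}$ is coherent, AIP and pure-extending (its only pure ideals are $0$ and $R$) but not perfect. So the missing input must come from the CF hypothesis in an essential way.

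Falling back on the paper's ending does not supply this input either. The paper stabilizes $\rann(a_nR)=e_nR$ along an ascending chain $a_1R\subseteq a_2R\subseteq\cdots$. It then uses a corner-ring argument to get $a_{k+1}\in a_kR$, and concludes via ``Jonah's characterization'' from ACC on principal right ideals, as in Proposition~\ref{thm:pJohns-perfect}. But that ACC holds automatically here and does not imply right perfectness: $\mathbb{Z}$ is noetherian but not perfect. The corner-ring step has two further problems:
\begin{itemize}
\item It reads $y\in\rann(a_kR)$ off from $a_ky=0$, which only gives $y\in\rann(a_k)$.
\item It uses that left non-zero-divisors in a Dedekind-finite ring are units, which fails for $2\in\mathbb{Z}$.
\end{itemize}
The criterion that does characterize right perfectness is the one you actually named, Bass's DCC on principal left ideals. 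A genuinely new argument is still needed to obtain it from right CF and AIP.
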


Before we prove this theorem, we show how the coherence condition bridges the gap between flatness and projectivity to force pure right ideals to be direct summands. We include this as a lemma which is also of independent interest.

\begin{lemma} \label{lm:PDS} 
    Under the assumptions of Theorem \ref{thm:CF-PCS}, every pure right ideal of \(R\) is a direct summand. Hence \(R\) is a right PDS ring.
\end{lemma}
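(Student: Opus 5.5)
The plan is to show that for a pure right ideal $I$ of $R$, the cyclic module $R/I$ is both flat and finitely presented. It is then projective, so $0\to I\to R\to R/I\to 0$ splits and $I$ is a direct summand of $R_R$. I expect only the right CF and right coherence hypotheses to be needed here. AIP and pure-extending should enter only later, in the proof of Theorem~\ref{thm:CF-PCS} itself.

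\emph{Flatness.} Since $I$ is pure in $R_R$, the sequence $0\to I\to R\to R/I\to 0$ is pure exact. A pure quotient of the flat module $R_R$ is flat, so $R/I$ is flat.

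\emph{Finite presentation.}
\begin{enumerate}
\item By right CF, there is a monomorphism $\varphi: R/I\hookrightarrow F$ with $F$ free, say $F=R^{(\Lambda)}$.
\item The element $x:=\varphi(1+I)$ has only finitely many nonzero coordinates. The image of $\varphi$ is $xR$, so it lies in a finitely generated free summand $R^n\subseteq F$. Hence $R/I\cong xR\subseteq R^n$.
\item $R^n$ is finitely presented and $xR$ is a finitely generated submodule of it. By right coherence in the form recalled in the preliminaries (finitely generated submodules of finitely presented modules are finitely presented), $xR$ is finitely presented.
\item Consequently $R/I$ is finitely presented. Equivalently, $I=\rann_{R^n}(x)$ is finitely generated.
\end{enumerate}

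\emph{Conclusion.} A finitely presented flat module is projective (a standard fact; see Lam's Lectures). So $R/I$ is projective, the sequence $0\to I\to R\to R/I\to 0$ splits, and $I$ is a direct summand. Since $I$ was an arbitrary pure right ideal, $R$ is right PDS.

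The only step needing care is the reduction from an arbitrary free module to $R^n$, which makes the coherence hypothesis applicable. It works because $R/I$ is cyclic, so its image in $F$ is generated by the single element $x$, which has finite support.
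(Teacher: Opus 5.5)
Your proof is correct, and it takes a more direct route than the paper's. The paper first uses the pure-extending hypothesis to find an idempotent $e$ with $I$ essential in $eR$. It then runs the same ``flat $+$ finitely presented $\Rightarrow$ projective'' argument on the cyclic module $eR/I$. Flatness comes from $R/I\cong eR/I\oplus(1-e)R$, and finite presentation from right CF plus coherence. This gives that $I$ is a direct summand of $eR$, and essentiality then forces $I=eR$.

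You apply the identical ingredients to $R/I$ itself, i.e.\ the case $e=1$. There no essentiality is needed, because the splitting of $0\to I\to R\to R/I\to 0$ already exhibits $I$ as a summand of $R_R$. What your version buys is generality: it shows that every right CF, right coherent ring is right PDS, so neither AIP nor pure-extending is needed for the lemma. This lemma, used directly and through Proposition~\ref{prop:noIdempotents}, is the only place the pure-extending hypothesis enters the proof of Theorem~\ref{thm:CF-PCS}. So your argument also indicates that this hypothesis can be dropped there. The paper's detour through $eR$ gains nothing beyond identifying the summand as the particular $eR$ in which $I$ is essential.
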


\begin{proof}
    Let \(I \subseteq R_R\) be a pure right ideal. By the pure-extending hypothesis, there exists an idempotent \(e \in R\) such that $I$ is essential in $eR$. Consider the cyclic right module \(eR/I\). Since \(R\) is right CF, so \(eR/I\) embeds into a free module \(R^n\). Let $f: eR/I \hookrightarrow R^n$ be a monomorphism and set \(v = f(e+I) \in R^n\). Then \(eR/I \cong vR\). Since \(e+I = (e+I)e\), we have \(v = ve\) and hence $v(1-e) = 0$. Because \(I\) is pure in \(R\), the quotient \(R/I\) is flat. Since $R/I \cong eR/I \oplus (1-e)R$, the direct summand \(eR/I\) is flat. Let \(M := eR/I\).

    Now, \(M\) is cyclic and hence finitely generated. Since \(M\) embeds into \(R^n\), and \(R^n\) is finitely presented, the coherence of \(R_R\) implies that \(M\) , being finitely generated submodule of a finitely presented module, is finitely presented. Thus, \(M\) is both flat and finitely presented. By \cite{lam1999lectures}*{Theorem 4.30}, \(M\) is projective. Therefore the short exact sequence $0 \to I \to eR \to M \to 0$ splits and hence \(I\) is a direct summand of \(eR\). Since $I$ is essential in $eR$, we get \(I = eR\). Hence, every pure right ideal is a direct summand, so \(R\) is right PDS.
\end{proof}

Another consequence of the assumptions in Theorem \ref{thm:CF-PCS} is the absence of infinite set of orthogonal idepotents, as is shown in out next proposition. 
\begin{proposition} \label{prop:noIdempotents}
    Under the assumptions of Theorem \ref{thm:CF-PCS}, \(R\) contains no infinite set of nonzero orthogonal idempotents.
\end{proposition}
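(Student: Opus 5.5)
The plan is to argue by contradiction, reducing everything to Lemma~\ref{lm:PDS}: under the hypotheses of Theorem~\ref{thm:CF-PCS}, every pure right ideal is a direct summand, hence cyclic, generated by an idempotent. Suppose $\{e_i\}_{i\ge 1}$ is an infinite set of nonzero pairwise orthogonal idempotents; passing to a countable subset, we may index them by $\mathbb{N}$. Consider the right ideal
\[
I=\sum_{i\ge 1} e_iR=\bigoplus_{i\ge1}e_iR .
\]
The sum is direct by orthogonality: left multiplication by $e_j$ kills $e_iR$ for $i\neq j$ and fixes $e_jR$.

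\textbf{Step 1: $I$ is pure in $R_R$.} For each $n$, set $f_n=e_1+\cdots+e_n$. This is an idempotent and $f_nR=\bigoplus_{i\le n}e_iR$, so each $f_nR$ is a direct summand of $R_R$ and hence pure. Then $I=\bigcup_n f_nR$ is a directed union of pure submodules. A directed union of pure submodules is pure, by the elementwise criterion: a finite system of linear equations with coefficients in $R$ and constants in $I$ involves only finitely many elements, so its constants lie in some $f_nR$, and purity of $f_nR$ gives a solution there. Alternatively, use that $\varinjlim$ commutes with $\otimes$ and preserves monomorphisms. This is the step I expect to need the most care, mainly in stating precisely which characterization of purity is used.

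\textbf{Step 2: $I$ is a direct summand.} Lemma~\ref{lm:PDS} applies and gives $I=eR$ for some idempotent $e$. Since $e\in I=\bigcup_n f_nR$, we have $e\in f_nR$ for some $n$, and therefore $I=eR\subseteq f_nR=\bigoplus_{i\le n}e_iR$.

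\textbf{Step 3: contradiction.} We have $e_{n+1}\in I\subseteq f_nR$, so $e_{n+1}=f_nr$ for some $r\in R$. Multiplying on the left by $e_{n+1}$ and using $e_{n+1}f_n=0$ gives $e_{n+1}=e_{n+1}^2=e_{n+1}f_nr=0$. This contradicts the assumption that $e_{n+1}\neq 0$.

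In this argument the AIP hypothesis enters only through Lemma~\ref{lm:PDS}, if at all. Coherence and the CF condition are used entirely through that lemma. Consequently the proposition follows from the right PDS property together with the pure-extending hypothesis already packaged there.
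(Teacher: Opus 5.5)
Your proposal is correct and follows essentially the same route as the paper. The paper also forms $I=\bigoplus_{i\ge1}e_iR$, shows it is pure as a directed union of the summands $(e_1+\cdots+e_n)R$, and uses Lemma~\ref{lm:PDS} to write $I=eR$ with $e$ in a finite partial sum; it then gets a contradiction by left-multiplying a later $e_j$. Your side remark is also accurate: the paper's proof of Lemma~\ref{lm:PDS} uses only the pure-extending, CF and coherence hypotheses, so AIP plays no role in this proposition.
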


\begin{proof}
    Assume, for contradiction, that \(\{e_1, e_2, \dots\}\) is an infinite set of nonzero orthogonal idempotents. Define $I = \bigoplus_{i=1}^\infty e_i R$. For each \(n\), let \(F_n = \bigoplus_{i=1}^n e_i R\). Each \(F_n\) is a direct summand of \(R\) (with idempotent \(e_1 + \cdots + e_n\)), hence each \(F_n\) is pure in \(R\). Since \(I = \bigcup_{n=1}^\infty F_n\) is a direct union of pure submodules, \(I\) is pure. By Lemma~\ref{lm:PDS}, \(I\) is a direct summand. Thus, \(I = fR\) for some idempotent \(f \in I\). Since \(f \in      \bigoplus_{i=1}^\infty e_i R\), there exists \(m\) such that $f \in \bigoplus_{i=1}^m e_i R$. For any \(j > m\), \(e_j \in I = fR\), so \(e_j = f r\) for some \(r \in R\). Multiplying on the left by \(e_j\), we get $e_j = e_j^2 = e_j f r$. But \(f \in \bigoplus_{i=1}^m e_i R\) and \(e_j\) is orthogonal to each \(e_i\) for \(i \le m\), so \(e_j f = 0\). Hence \(e_j = 0\), a contradiction.
\end{proof}

We are now in a position to give the proof of Theorem \ref{thm:CF-PCS}.
\begin{proof}[Proof of Theorem \ref{thm:CF-PCS}]
    For any $a \in R$, consider the two-sided ideal $I = RaR$. By right AIP hypothesis, $\rann_R(I)$ is a pure right ideal of $R$. For any $r \in R$, $r \in \rann_R(RaR)$ if and only if $(xay)r = 0$ for all $x, y \in R$ if and only if $ayr = 0$ for all $y \in R$ if and only if $r \in \rann_R(aR)$. Thus $\rann_R(aR) = \rann_R(RaR)$ is a pure right ideal of $R$.
    
    By Lemma~\ref{lm:PDS}, every pure right ideal of $R$ is a direct summand of $R_R$. Hence, for each $n \ge 1$, there exists an idempotent $e_n \in R$ such that $\rann_R(a_n R) = e_n R.$ Applying $\rann_R(-)$ to the ascending chain $a_1 R \subseteq a_2 R \subseteq a_3 R \subseteq \dots$ yields a descending chain of direct summands $e_1 R \supseteq e_2 R \supseteq e_3 R \supseteq \cdots$. By Proposition~\ref{prop:noIdempotents}, $R$ contains no infinite set of nonzero orthogonal idempotents, so $R_R$ satisfies DCC on direct summands. Consequently, the chain of right annihilators stabilizes at some index $k$: $\rann_R(a_k R) = \rann_R(a_{k+1} R) = \cdots = e_k R.$

    Set $f = 1 - e_k$ and consider the corner ring $S = f R f$, which has identity element $f$. Since $a_k R \subseteq a_{k+1} R$, there exists $r \in R$ such that $a_k = a_{k+1} r$. Because $a_k e_k = 0$ and $a_{k+1} e_k = 0$, we have $a_k f = a_k$ and $a_{k+1} f = a_{k+1}$. Multiplying on the right by $f$ yields: $a_k = a_k f = a_{k+1} r f = (a_{k+1} f) r f = a_{k+1} (f r f).$ Set $x = f r f \in S$. Then $a_k = a_{k+1} x$. Now, let $y \in S = f R f$. If $x y = 0$, then: $a_k y = (a_{k+1} x) y = a_{k+1} (x y) = a_{k+1} \cdot 0 = 0.$ This implies $y \in \rann_R(a_k R) = e_k R$. Therefore, $y \in f R f \cap e_k R = \{0\}$, so $y = 0$. This proves that $x$ is a left non-zero-divisor in $S$. Since $R$ contains no infinite set of nonzero orthogonal idempotents, the corner ring $S = f R f$ likewise contains no infinite set of nonzero orthogonal idempotents. Consequently, $S$ is Dedekind-finite, which implies that every left non-zero-divisor in $S$ is a unit in $S$. Thus, $x \in U(S)$ and hence there exists $z \in S$ such that $x z = f$. Multiplying $a_k = a_{k+1} x$ on the right by $z$ gives $a_{k+1} = a_{k+1} f = a_{k+1} (x z) = (a_{k+1} x) z = a_k z \in a_k R.$ Hence, $a_{k+1} R \subseteq a_k R$, which forces $a_k R = a_{k+1} R$,  so the ascending chain of principal right ideals stabilizes at $k$. By Jonah's characterization \cite{lam2001firstcourse}*{23.20(7), Bass Theorem P}, $R$ is right perfect.
\end{proof}

Recall that every semi-hereditary ring is coherent, but the converse is false, in general. Strengthening the coherence hypothesis to semi-hereditary in Theorem~\ref{thm:CF-PCS} not only removes the need for the pure-extending and AIP assumptions but also yields a much stronger structural conclusion, as shown below.

\begin{corollary}\label{thm:CF-semihereditary}
    Let \(R\) be a right CF and right semi-hereditary ring. Then \(R\) is semisimple artinian.
\end{corollary}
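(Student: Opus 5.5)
The plan is to show first that $R$ is right nonsingular with every finitely generated right ideal a direct summand, and then use the CF embedding to force every cyclic right module to be projective. Over a right semi-hereditary ring every finitely generated right ideal is projective. In particular, for each $a\in R$ the map $R\to aR$, $r\mapsto ar$, splits, so $\rann(a)=eR$ for some idempotent $e$; that is, $R$ is right p.p. (Rickart). I will combine this with the CF hypothesis as follows.

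Let $I$ be a finitely generated right ideal. Then $R/I$ is cyclic, so by right CF it embeds in some $R^n$. Its image $v R\subseteq R^n$, with $v$ the image of $1+I$, is a finitely generated submodule of a free module. Over a right semi-hereditary ring every finitely generated submodule of a projective module is projective; this is the standard Albrecht/Kaplansky fact that finitely generated submodules of $R^n$ are isomorphic to direct sums of finitely generated right ideals. Hence $R/I\cong vR$ is projective, so $0\to I\to R\to R/I\to 0$ splits and $I$ is a direct summand of $R_R$. Thus every finitely generated right ideal, in particular every principal one, is generated by an idempotent, and $R$ is von Neumann regular.

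It remains to upgrade regularity to semisimplicity. I would apply CF to an arbitrary cyclic module $R/K$, with $K$ any right ideal. It embeds in $R^n$ as a cyclic submodule $vR$. Cyclic submodules of $R^n$ are finitely generated, hence projective by the step above, so $R/K$ is projective and $K$ is a direct summand. Every right ideal being a summand means $R_R$ is semisimple, so $R$ is semisimple artinian.

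The main obstacle is the step ``finitely generated submodules of free modules are projective'' over a right semi-hereditary ring. I will cite it as the standard characterization of semi-hereditary rings in Lam's \emph{Lectures}, rather than rederive it via induction on $n$ by projecting $R^n\to R$ with a finitely generated, hence projective, image and splitting off. Once this is granted, the argument needs neither the pure-extending nor the AIP hypotheses of Theorem~\ref{thm:CF-PCS}.
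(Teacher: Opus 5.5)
Your proposal is correct and is essentially the paper's proof: use right CF to embed each cyclic $R/K$ into some $R^n$, note that finitely generated submodules of projectives over a right semi-hereditary ring are projective, and conclude that every right ideal is a direct summand. The intermediate step through von Neumann regularity, together with the remarks on right p.p.\ and nonsingularity, is harmless but unnecessary, since your final paragraph already handles arbitrary right ideals directly.
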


\begin{proof}
    Let \(M\) be any cyclic right \(R\)-module. By right CF, \(M\) embeds into a free module. Since \(M\) is finitely generated, its image lies in a finitely generated free module \(R^n\). As \(R\) is right semi-hereditary, $M$, being finitely generated submodule of a projective module, is projective. Hence, every cyclic right \(R\)-module is projective. Now take any right ideal \(I \leq R_R\). The quotient \(R/I\) is cyclic and hence projective. Therefore, the short exact sequence $0 \to I \to R \to R/I \to 0$ splits, and hence \(I\) is a direct summand of \(R_R\). Thus every right ideal of $R$ is a direct summand. Consequently, \(R\) is semisimple artinian.
\end{proof}

We, now, turn our attention to the principal analogue of the CF conjecture as it relates to $p$-quasi-Frobenius rings and $p$-Johns rings. We note that being $p$-quasi-Frobenius does not force the ring to be CF (see Remark \ref{rem:non-CF}).

\begin{remark}\label{rem:non-CF}
    Consider the cyclic module $M=R/(x_1,x_2)$ as considered in Example \ref{ex:explicit-nonreflexive-module}. Its socle is $\soc(M)=\{\bar u\in M \mid J\bar u=0\}.$ For each $i\ge 3$, the element $\bar x_i$ lies in $\soc(M)$, because $x_j x_i=0$ for $j\neq i$ and $x_i^2=\mathfrak m\in (x_1,x_2)$. The set $\{\bar x_i\}_{i\ge 3}$ is linearly independent over $\Bbbk$, so $\dim_{\Bbbk}\soc(M)=\infty$. On the other hand, for any finite $n$, $\soc(R^n)\cong (\soc(R))^n \cong \mathbb \Bbbk^n,$ since $\soc(R)=\ann_R(J)=\mathbb \Bbbk\mathfrak m$ is $1$-dimensional. If $M$ embedded into a free module, its image would lie in $R^n$ for some finite $n$ (because the image of the cyclic generator has finite support), forcing an embedding of infinite-dimensional $\soc(M)$ into finite-dimensional $\soc(R^n)$, which is not possible. Hence $R$ is not CF.
\end{remark}

In the case of right \(p\)-dual rings (more generally, right $p$-Johns rings), we note that, by Proposition~\ref{prop:pdual-principal-injective}, every right \(p\)-dual ring is left $p$-injective. On the other hand, the left-handed version of \cite{nicholson2003quasi}*{Lemma 7.2(1)} asserts that every right CF ring is left $p$-injective. Thus, both classes sit inside the family of left $p$-injective rings. Despite this common behavior, the two notions are logically independent, as shown in the following note.

\begin{note} \label{note:independence_pdual_CF}
    \begin{enumerate}
        \item The ring \(R=\prod_{i=1}^{\infty} \Bbbk\) from Example~\ref{ex:dual-pdual-independent}(1) is \(p\)-dual. However, it is not CF, since the ideal of finitely supported sequences \(I=\bigoplus_{i=1}^{\infty}\Bbbk\) cannot be expressed as the annihilator of a finite subset (its annihilator is \(0\), and any annihilator of a finite set in this Boolean ring is a principal ideal generated by an idempotent with finite support, which is strictly contained in \(I\)).
        
        \item The finite-dimensional local algebra \(R=\Bbbk[x,y]/(x^2,y^2)\) from Example~\ref{ex:dual-pdual-independent}(2) is quasi-Frobenius and hence CF. However, as shown in Remark~\ref{rem:converse-pdual-principal-injective}, it is not \(p\)-dual.
    \end{enumerate}
\end{note}

In our next theorem, we give conditions under which the classes of CF rings and $p$-quasi-Frobenius rings coincide.

\begin{theorem} \label{cor:principalCF-QF}
    Let $R$ be a principal two-sided ideal ring. If $R$ is left and right AIP, left and right coherent, and both $R_R$ and ${}_RR$ are pure-extending, then $R$ is two-sided CF if and only if $R$ is $p$-quasi-Frobenius. In particular, $R$ is quasi-Frobenius.
\end{theorem}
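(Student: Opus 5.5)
The plan is to prove both implications and then invoke Theorem~\ref{thm:PIR-pQF-QF} for the final assertion.

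\emph{CF $\Rightarrow$ $p$-quasi-Frobenius.} Assume $R$ is two-sided CF. The hypotheses are symmetric: right AIP, right CF, right coherent and $R_R$ pure-extending; likewise on the left. So Theorem~\ref{thm:CF-PCS} and its left-hand analogue show that $R$ is right and left perfect, that is, two-sided endoartinian by Proposition~\ref{prop:endo_noether_artin}(2). For $p$-injectivity I use the fact already quoted before Note~\ref{note:independence_pdual_CF}: the left-handed version of \cite{nicholson2003quasi}*{Lemma 7.2(1)} says a right CF ring is left $p$-injective. By symmetry, a left CF ring is right $p$-injective. This is transparent: for $a\in R$ the cyclic module $R/\rann(a)\cong aR$ embeds in some $R^n$, which forces $\rann(a)$ to be an intersection of finitely many right annihilators of elements, and this yields the double-annihilator identity. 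Hence $R$ is two-sided perfect and two-sided $p$-injective, so it is $p$-quasi-Frobenius by Definition~\ref{def:p-quasi-Frobenius}. Since $R$ is a principal two-sided ideal ring, Theorem~\ref{thm:PIR-pQF-QF} then gives that $R$ is quasi-Frobenius.

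\emph{$p$-quasi-Frobenius $\Rightarrow$ CF.} Assume $R$ is $p$-quasi-Frobenius. Theorem~\ref{thm:PIR-pQF-QF} makes $R$ quasi-Frobenius. Over a quasi-Frobenius ring every finitely generated module embeds in a free module: $R$ is an injective cogenerator, and a finitely generated module has finitely generated essential socle, so it embeds in a finite direct sum of injective hulls of simples, each a summand of $R$. Hence $R$ is two-sided FGF, and in particular two-sided CF. This direction does not use the AIP, coherence or pure-extending hypotheses.

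The ``in particular'' clause follows because in either case $R$ is $p$-quasi-Frobenius and a principal two-sided ideal ring, so Theorem~\ref{thm:PIR-pQF-QF} applies.

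The main obstacle is the reliance on Theorem~\ref{thm:PIR-pQF-QF}, whose proof is not displayed. If it must be justified, I would argue as follows. A principal right ideal ring that is $p$-injective has $\rann\lann(I)=I$ for every right ideal $I$, since every right ideal is principal. Perfectness combined with principal ideals gives ACC on right ideals, so $R$ is right noetherian. Then \cite{nicholson2003quasi} (a right noetherian ring that is left $p$-injective with annihilator duality is quasi-Frobenius, i.e.\ the Ikeda--Nakayama route) yields quasi-Frobenius. The remaining steps are direct citations of earlier results, applied once on each side.
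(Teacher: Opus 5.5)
Your main line is the paper's proof. You get perfectness on both sides from Theorem~\ref{thm:CF-PCS} and its left-hand version, and two-sided $p$-injectivity from Lemma~7.2(1) of Nicholson--Yousif. For the converse you use Theorem~\ref{thm:PIR-pQF-QF} together with the fact that quasi-Frobenius rings are FGF, hence CF. That main line is fine, but two of your supporting sketches are wrong as written.

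\textbf{The ``transparent'' justification of CF $\Rightarrow$ $p$-injective uses the wrong cyclic module.} The module $R/\rann(a)\cong aR$ already sits inside $R_R$, so embedding it in a free module tells you nothing. The conclusion that $\rann(a)$ is the annihilator of a finite set is vacuous, since it is $\rann\{a\}$. The identity you then obtain, $\rann(\lann(\rann(a)))=\rann(a)$, holds in every ring. The fix is to apply right CF to $R/aR$. If an embedding $R/aR\hookrightarrow R^n$ sends $1+aR\mapsto(c_1,\dots,c_n)$, then $aR=\rann\{c_1,\dots,c_n\}$. Hence $\rann(\lann(a))=aR$, which is left $p$-injectivity.

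\textbf{Your fallback sketch for Theorem~\ref{thm:PIR-pQF-QF} has two problems.}
\begin{enumerate}
\item Right noetherianity is immediate because every right ideal is principal, hence finitely generated. Perfectness gives DCC, not ACC, on principal ideals, so it is not the reason.
\item The criterion you invoke is false if only the one-sided condition $\rann(\lann(I))=I$ for right ideals is used. Every right Johns ring is right noetherian and left $p$-injective, and the Faith--Menal right Johns ring is not right artinian. You need both conditions: $\rann(\lann(I))=I$ for all right ideals and $\lann(\rann(L))=L$ for all left ideals. These come from two-sided $p$-injectivity together with principality on both sides.
\end{enumerate}
With both conditions, $\lann$ and $\rann$ are mutually inverse inclusion-reversing bijections between right and left ideals. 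ACC on one side then becomes DCC on the other, so $R$ is two-sided artinian with the double annihilator conditions, hence quasi-Frobenius.

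\textbf{The same observation lets you avoid Theorem~\ref{thm:CF-PCS} altogether.} A principal two-sided ideal ring is two-sided noetherian. Two-sided CF already makes every one-sided ideal the annihilator of a finite set. So $R$ is two-sided artinian, hence perfect, and $p$-injective on both sides, without AIP, coherence or pure-extending. This is worth doing, because the paper's proof of Theorem~\ref{thm:CF-PCS} ends by inferring right perfectness from ACC on principal right ideals. That implication fails in general; $\mathbb{Z}$ is a counterexample.
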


\begin{proof}
    First suppose $R$ is two-sided CF. By Theorem~\ref{thm:CF-PCS}, $R$ is right perfect; the left-handed version gives left perfectness. Since right CF rings are left $p$-injective, and left CF rings are right $p$-injective by \cite{nicholson2003quasi}*{Lemma 7.2(1)}, $R$ is two-sided $p$-injective. Hence $R$ is $p$-quasi-Frobenius.
    
    The other direction follows by Theorem \ref{thm:PIR-pQF-QF} and the fact that quasi-Frobenius rings are two-sided CF.
\end{proof}

Theorem \ref{thm:pJohns-pQF} established that two-sided $p$-Johns rings are $p$-quasi-Frobenius. The next result shows that a right $p$-Johns ring is $p$-quasi-Frobenius provided it is also right CF and left perfect. If the hypothesis were strengthened from right CF to left CF, then the Faith–Walker theorem \cite{faith1967direct-sum} would force $R$ to be quasi-Frobenius. However, our setting requires only right CF and left perfect, which allows us to bypass the stronger quasi-Frobenius conclusion and, instead, obtain the intermediate $p$-quasi-Frobenius property.

\begin{theorem} \label{thm:p-QF}
    If $R$ is right $p$-Johns, left CF and left perfect, then $R$ is $p$-quasi-Frobenius.
\end{theorem}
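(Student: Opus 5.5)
The proof will check the two ingredients of Definition~\ref{def:p-quasi-Frobenius} separately: two-sided endoartinianity (equivalently, left and right perfectness) and two-sided $p$-injectivity. Each of the four one-sided properties should come either directly from a hypothesis or from a result already stated in the paper, so I do not expect any new construction to be needed.

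\emph{Perfectness.} Since $R$ is right $p$-Johns, Proposition~\ref{thm:pJohns-perfect} gives that $R$ is right perfect. The argument there combines right $p$-duality with ACC on principal right annihilators to obtain ACC on principal right ideals, and then applies Jonah's criterion. Left perfectness is a hypothesis. So $R$ is two-sided perfect. By Proposition~\ref{prop:endo_noether_artin}(2) and its left-hand version, right endoartinian is equivalent to left perfect and left endoartinian is equivalent to right perfect. Hence $R$ is two-sided endoartinian.

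\emph{Left $p$-injectivity.} Since $R$ is right $p$-dual, Proposition~\ref{prop:pdual-principal-injective} gives $\rann(\lann(a))=aR$ for every $a\in R$. This is exactly left $p$-injectivity.

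\emph{Right $p$-injectivity.} Here I will use the left CF hypothesis through the left-right symmetric form of \cite{nicholson2003quasi}*{Lemma 7.2(1)}, as in the proof of Theorem~\ref{cor:principalCF-QF}: left CF rings are right $p$-injective. If this citation needs to be checked directly, I would argue as follows. Take $a\in R$ and $b\in\lann(\rann(a))$. The cyclic left module $R/\lann(a)$ embeds in a free module $R^n$, so $\lann(a)$ is the left annihilator of a finite set of elements of $R$. One then shows that the map $Ra\to R$, $ra\mapsto rb$, is well defined and extends to right multiplication by some element $c$, which gives $b=ac\in aR$. In other words, one verifies $\lann(\rann(a))=Ra$ for all $a$, which is the characterization in \cite{nicholson1995principally}*{Lemma 1.1}.

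Combining the three paragraphs, $R$ is two-sided perfect and two-sided $p$-injective, hence $p$-quasi-Frobenius. The only real obstacle is bookkeeping of sides. Three things must match: the "left CF $\Rightarrow$ right $p$-injective" direction of the cited lemma; the fact that right $p$-duality yields \emph{left} $p$-injectivity and not right; and the convention that right endoartinian means left perfect. Once these are aligned, every piece is supplied by an earlier result.
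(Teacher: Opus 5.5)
Your main line follows the paper's proof: left $p$-injectivity from right $p$-duality (Proposition~\ref{prop:pdual-principal-injective}), right $p$-injectivity from left CF via \cite{nicholson2003quasi}*{Lemma 7.2(1)}, right perfectness from Proposition~\ref{thm:pJohns-perfect}, and left perfectness by hypothesis. Two steps need repair.

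First, your fallback check of the citation does not work. The hypothesis $b\in\lann(\rann(a))$ gives $\rann(a)\subseteq\rann(b)$, not $\lann(a)\subseteq\lann(b)$, so $ra\mapsto rb$ need not be well defined. Extending a left-module map $Ra\to R$ to right multiplication is left $p$-injectivity. So at best the argument reproves $\rann(\lann(a))=aR$, which you already have, and it ends with $b=ac\in aR$ instead of $b\in Ra$. The CF hypothesis also plays no role, since $\lann(a)=\lann(\{a\})$ is trivially the annihilator of a finite set. The correct check is short. By left CF, the cyclic left module $R/Ra$ embeds in ${}_RR^n$, so $Ra=\lann(X)$ for a finite set $X$. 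Then $X\subseteq\rann(a)$, hence $\lann(\rann(a))\subseteq\lann(X)=Ra$.

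Second, and more seriously, the justification you give for right perfectness is invalid. You argue from ACC on principal right ideals and then invoke Jonah's criterion. But ACC on principal right ideals does not imply right perfectness: $\mathbb{Z}$ has ACC on principal ideals and is not perfect. Bass's Theorem~P characterizes right perfectness by DCC on principal left ideals. The paper's proof of Proposition~\ref{thm:pJohns-perfect} has exactly this defect, so both your proof and the paper's inherit it. You can bypass the proposition using only the hypotheses at hand:
\begin{itemize}
\item Once $R$ is right $p$-injective, take a chain $Ra_1\supseteq Ra_2\supseteq\cdots$. Since $a_{i+1}\in Ra_i$, it gives $\rann(a_1)\subseteq\rann(a_2)\subseteq\cdots$.
\item This chain stabilizes because $R$ is right endonoetherian.
\item Then $Ra_n=\lann(\rann(a_n))$ stabilizes as well.
\end{itemize}
Thus $R$ has DCC on principal left ideals and is right perfect by Bass's Theorem~P. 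With this and the corrected CF step, your proof is complete and no longer depends on Proposition~\ref{thm:pJohns-perfect}.
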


\begin{proof}
    Since \(R\) is right \(p\)-Johns, it is right \(p\)-dual, so by Proposition~\ref{prop:pdual-principal-injective}, \(R\) is left $p$-injective; and by Proposition~\ref{thm:pJohns-perfect}, \(R\) is right perfect. Since \(R\) is left CF, by \cite{nicholson2003quasi}*{Lemma 7.2(1)}, \(R\) is right $p$-injective. The hypothesis also gives that \(R\) is left perfect. Now, since \(R\) is left $p$-injective and right perfect (equivalently, right endoartinian), by the symmetric version of \cite{gera2026endoartinian}*{Theorem 2.14}, \(R\) is left endonoetherian. Since \(R\) is right \(p\)-Johns, it is right endonoetherian, by definition. Therefore, $R$ is \(p\)-quasi-Frobenius.
\end{proof}

Finally, we remark that the left CF property alone, even when combined with semiprimary (hence perfect) assumption, is not sufficient to force the \(p\)-quasi-Frobenius conclusion, as is shown in the following note.

\begin{note} \label{note:leftCF_not_p-QF}
    In Remark~\ref{rem:prop22-example}, we showed that the ring in Example~\ref{ex:semiprimary_PRIR} is left \(p\)-dual but not right \(p\)-dual. Using the left-handed version of \cite{nicholson2003quasi}*{Lemma 7.2(1)}, a ring is left CF if and only if every left ideal is the left annihilator of a finite subset. For \(R=\mathbb{F}[x;\sigma]/(x^2)\), the left ideals are \(0\), \(J=\mathbb{F}x\), and \(R\). Since $0=\lann(1)$, $J=\lann(x)$, $R=\lann(0)$, and each of \(\{1\}, \{x\}, \{0\}\) is finite, \(R\) is left CF. Consequently, \(R\) is right $p$-injective (which we already verified in Example~\ref{ex:semiprimary_PRIR}(2)) and right Kasch. However, \(R\) is not \(p\)-quasi-Frobenius, as it fails to be left $p$-injective (Example~\ref{ex:semiprimary_PRIR}(4)). 
    
    Let \(b \in \mathbb{F} \setminus \sigma(\mathbb{F})\). Define a left \(R\)-module map \(f: J \to R\) by $f(mx) = m b x$ for $m \in \mathbb{F}$. For any \(a+cx \in R\) and \(m x \in J\), $f((a+cx)(mx)) = f(amx + c x m x) = f(amx) = a m b x,$ while $(a+cx)f(mx) = (a+cx)(m b x) = a m b x,$ so \(f\) is indeed a left \(R\)-module homomorphism. If \(R\) were left self-injective, \(f\) would extend to a map \(F: R \to R\). Then $F(x) = F(x \cdot 1) = x F(1).$ Write \(F(1) = a + c x\). Then \(x F(1) = \sigma(a)x\). Hence $b x = f(x) = F(x) = \sigma(a)x,$ forcing \(b = \sigma(a) \in \sigma(\mathbb{F})\), a contradiction. Thus \(R\) is not left self-injective. Therefore, it is certainly not quasi-Frobenius. This aligns perfectly with Bj\"{o}rk's classical example \cite{bjork1969rings}, where a similar construction is used to delineate chain conditions and mininjectivity, illustrating the delicate boundary between one-sided and two-sided finiteness and injectivity properties. Thus, the left CF property alone, even when combined with semiprimary (hence perfect) assumptions, is insufficient to force the \(p\)-quasi-Frobenius conclusion. Since \(R\) also fails the right \(p\)-Johns hypothesis (because it is not right \(p\)-dual), this perfectly illustrates why the right \(p\)-Johns hypothesis in Theorem~\ref{thm:p-QF} is essential and cannot be replaced by weaker injectivity or finiteness conditions.
\end{note}

\section{Morita Invariance of Endonoetherianity}

It is well known that many ring-theoretic properties, including ACC on right annihilators, are not preserved under Morita equivalence \cite{lam1999lectures}*{p.~482}. This naturally prompts the question of whether the endonoetherian property, that is, ACC on principal right annihilators is Morita invariant. The following example demonstrates that, in general, it is not.

\begin{example}\label{ex:shepherdson}
Let $\mathbb{K}$ be a field and let $R$ be the $\mathbb{K}$-algebra generated by $s,t,u,v,w,x,y,z$ subject to the relations determined by $AB=I_2$, where $A=
\begin{pmatrix}
s&t\\
u&v
\end{pmatrix}$, and $B=
\begin{pmatrix}
w&x\\
y&z
\end{pmatrix}$.

By a classical construction of Shepherdson \cite{shepherdson1951inverses} (see also \cite{lam1999lectures}*{Exercise 18, p. 19}), $R$ is a domain. Also $BA\neq I_2$. Consequently, $A$ possesses a right inverse but no left inverse in $\mathbb{M}_2(R)$.

Since $R$ is a domain, the right annihilator of every nonzero element of $R$ is zero. Hence the only right annihilators of elements of $R$ are $0$ and $R$, and therefore $R$ is right endonoetherian.

Set $E=I_2-BA$. Since $BA\neq I_2$, we have $E\neq0$, and $AE=A-A(BA)=A-(AB)A=0$. Choose a nonzero column vector $v\in R^2$ occurring in $E$. Then $Av=0$. Let $Y=
\begin{pmatrix}
v & 0
\end{pmatrix}
\in \mathbb{M}_2(R)$. Then $Y\neq0$ and $AY=0$.

For each $n\ge1$, define $X_n=B^nY$. Since $AB=I_2$, it follows inductively that $A^nB^n=I_2$ for all $n\ge1$. Therefore $A^nX_n=A^nB^nY=Y\neq 0$, while $A^{n+1}X_n =A(A^nB^n)Y =AY =0$. 

Hence $X_n\in \rann_{\mathbb{M}_2(R)}(A^{n+1}) \setminus \rann_{\mathbb{M}_2(R)}(A^n)$. Thus, 

$$\rann_{\mathbb{M}_2(R)}(A) \subsetneq \rann_{\mathbb{M}_2(R)}(A^2) \subsetneq \rann_{\mathbb{M}_2(R)}(A^3) \subsetneq \cdots$$
is an infinite strictly ascending chain of right annihilators of single elements.

Thus $\mathbb{M}_2(R)$ is not right endonoetherian. Since $\mathbb{M}_2(R)$ is Morita equivalent to $R$, right endonoetherianity is not Morita invariant.
\end{example}

We now adopt a categorical perspective to examine the behavior of the endonoetherian property under Morita equivalence. While this property is not preserved in general, we show, in our next result, that it is retained under additional hypotheses ensuring that principal right annihilators correspond between the rings. 

\begin{theorem}\label{thm:morita-princ-ann-acc}
Let $F:\mathrm{Mod}\text{-}R\to\mathrm{Mod}\text{-}S$ be a Morita equivalence with quasi-inverse $G$. For each $a\in R$ write $\ell_a:R\to R$ for left multiplication by $a$ and denote $\mathcal{A}_R:=\{\rann_R(a)=\ker(\ell_a)\mid a\in R\}$ the family of principal right annihilators of $R$. Set $P:=F(R)$ and let $\Phi:\morp_R(R)\xrightarrow{\ \cong\ }\morp_S(P)$ be the ring isomorphism induced by $F$ (so $\Phi(\ell_a)=F(\ell_a)$ for each $a\in R$).

Then $F$ carries the family $\mathcal{A}_R$ bijectively onto the family $\mathcal{K}_P:=\{\ker\big(\Phi(\ell_a)\big)\mid a\in R\}$ of kernels of the corresponding endomorphisms of the right $S$-module $P$. Consequently, $R$ satisfies ACC on the family $\mathcal{A}_R$ if and only if the family $\mathcal{K}_P$ satisfies ACC.

Moreover, if $P\cong S$ as right $S$-modules (equivalently, $\morp_S(P)\cong S$ via left multiplication), then the transported kernels in $\mathcal{K}_P$ are precisely the principal right annihilators of $S$, and therefore the property endonoetherian is Morita invariant.
\end{theorem}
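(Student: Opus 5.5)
The plan is to use only that a Morita equivalence $F$ is an exact, additive equivalence of abelian categories. Such a functor preserves kernels and monomorphisms, and it induces an isomorphism of subobject lattices $\{N\le M\}\to\{N'\le F(M)\}$ for every module $M$.

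\textbf{Step 1.} For $a\in R$ apply $F$ to the exact sequence
\[
0\to\rann_R(a)\xrightarrow{\iota_a}R\xrightarrow{\ell_a}R .
\]
By exactness, $F(\iota_a)$ identifies $F(\rann_R(a))$ with $\ker F(\ell_a)=\ker\Phi(\ell_a)$ as a subobject of $P$. So the map $\rann_R(a)\mapsto \ker\Phi(\ell_a)$ is the restriction of the lattice map $\mathcal{L}(R_R)\to\mathcal{L}(P_S)$, $N\mapsto \operatorname{im}F(N\hookrightarrow R)$. This map is well defined on the family $\mathcal{A}_R$, independently of the chosen $a$.

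\textbf{Step 2.} I will show this lattice map is an order isomorphism. Its inverse is obtained from $G$ together with the natural isomorphism $GF\cong\id$. Inclusions $N_1\subseteq N_2$ correspond to factorizations of monomorphisms, and $F$, being full and faithful, preserves and reflects such factorizations. Restricting to $\mathcal{A}_R$ gives a bijection onto $\mathcal{K}_P$ that preserves and reflects inclusion. Hence ascending chains correspond to ascending chains, and stabilization is preserved in both directions, so ACC on $\mathcal{A}_R$ holds if and only if ACC on $\mathcal{K}_P$ holds.

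\textbf{Step 3.} Now suppose $P\cong S$ via $\theta:P\to S$. Conjugation by $\theta$ gives a ring isomorphism $\morp_S(P)\cong\morp_S(S_S)\cong S$, the last step sending $s$ to left multiplication $\ell_s$. Since $\Phi$ is surjective, every $s\in S$ arises as $\theta\Phi(\ell_a)\theta^{-1}=\ell_s$ for some $a\in R$. Then $\theta(\ker\Phi(\ell_a))=\ker \ell_s=\rann_S(s)$, so $\theta$ carries $\mathcal{K}_P$ exactly onto the family of principal right annihilators of $S$, as an inclusion-preserving bijection. Combining this with Step 2 and Proposition~\ref{prop:endo_noether_artin}(1), $R$ is right endonoetherian if and only if $S$ is.

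The main obstacle is Step 2, namely checking carefully that $F$ induces an inclusion-reflecting bijection of subobjects, rather than merely a bijection of isomorphism classes of modules. I would handle this by working with monomorphisms into the fixed object $R$ and using full faithfulness: a morphism $N_1\to N_2$ over $R$ exists if and only if one exists over $F(R)$. Surjectivity of $\Phi$ is also needed in Step 3, and it holds because $F$ is full.
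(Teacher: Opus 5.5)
Your proposal is correct and follows essentially the same route as the paper. You apply the exact functor $F$ to $0\to\rann_R(a)\to R\xrightarrow{\ell_a}R$ to identify $F(\rann_R(a))$ with $\ker\Phi(\ell_a)$, use the equivalence to get an inclusion-preserving bijection $\mathcal{A}_R\to\mathcal{K}_P$, and, when $P\cong S_S$, identify each $\Phi(\ell_a)$ with a left multiplication $\ell_s$. Your version is also more careful at the points the paper glosses over: order-reflection via full faithfulness, surjectivity of $\Phi$ so that \emph{all} principal right annihilators of $S$ arise, and transporting kernels from $P$ to $S$ along $\theta$.
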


\begin{proof}
Since $F$ is a Morita equivalence, it is additive, exact, fully faithful and essentially surjective. Also it admits a quasi-inverse $G$. In particular, for every $R$-module $M$ the natural map $F_{M,M}:\morp_R(M)\xrightarrow{\ \cong\ }\morp_S\big(F(M)\big)$ is a ring isomorphism. Takin $M=R$, we obtain the stated isomorphism $\Phi$.

Fix $a\in R$ and consider the short exact sequence of right $R$-modules $0\longrightarrow\ker(\ell_a)\xrightarrow{\ \iota\ }R\xrightarrow{\ \ell_a\ }R.$ Applying the exact functor $F$ yields the exact sequence $0\longrightarrow F\big(\ker(\ell_a)\big)\xrightarrow{\ F(\iota)\ }P\xrightarrow{\ F(\ell_a)\ }P$, and hence $F\big(\ker(\ell_a)\big)=\ker\big(F(\ell_a)\big)=\ker\big(\Phi(\ell_a)\big).$ Thus $F$ sends the principal right annihilator $\rann_R(a)=\ker(\ell_a)$ to the submodule $\ker\big(\Phi(\ell_a)\big)\le P$.

Because $F$ has a quasi-inverse $G$, the assignment $\rann_R(a)\longmapsto\ker\big(\Phi(\ell_a)\big)$ is bijective when viewed as a map from $\mathcal{A}_R$ onto $\mathcal{K}_P$: indeed, given $K'=\ker(\Phi(\ell_a))\le P$ we recover $G(K')\cong G\big(F(\ker(\ell_a))\big)\cong\ker(\ell_a)$, so $G$ provides the inverse correspondence on submodules.

The bijection preserves inclusions (equivalences of categories preserve subobjects and inclusions), so strictly ascending chains in $\mathcal{A}_R$ correspond exactly to strictly ascending chains in $\mathcal{K}_P$. This proves equivalence of ACC for the two families.

Finally, if $P\cong S_S$, then $\morp_S(P)\cong\morp_S(S_S)\cong S$ via left multiplication. Hence each $\Phi(\ell_a)$ is left multiplication by some $b\in S$, and $\ker\big(\Phi(\ell_a)\big)=\ker(\ell_b)=\rann_S(b).$ Thus $\mathcal{K}_P$ is exactly the family of principal right annihilators of $S$ and the endonoetherian property is preserved by the Morita equivalence.
\end{proof}

\begin{corollary} \label{cor:endonoetherian-morita}
Let \(R\) and \(S\) be Morita equivalent rings. Assume:

\begin{enumerate}
    \item \(R\) is a principal right ideal ring and right \(p\)-Johns;
    \item under the equivalence, the module \(R_R\) corresponds to a module isomorphic to \(S_S\).
\end{enumerate}

Then \(S\) is right endonoetherian. If, in addition, \(S\) is also principal right ideal ring and right \(p\)-Johns, then \(R\) and \(S\) are right noetherian.
\end{corollary}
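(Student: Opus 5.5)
The proof will be a direct assembly of results already established. First, hypothesis (1) says $R$ is right $p$-Johns, so by definition $R$ is right endonoetherian. In the notation of Theorem~\ref{thm:morita-princ-ann-acc}, this is the statement that $R$ satisfies ACC on the family $\mathcal{A}_R$ of principal right annihilators. Applying Theorem~\ref{thm:morita-princ-ann-acc} to the given Morita equivalence $F$, we obtain ACC on the transported family $\mathcal{K}_P$ of kernels in $P=F(R)$.

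Next, hypothesis (2) gives $P\cong S_S$. The final clause of Theorem~\ref{thm:morita-princ-ann-acc} then identifies $\mathcal{K}_P$ with the family of principal right annihilators $\rann_S(b)$ of $S$. This yields ACC on principal right annihilators of $S$, so by Proposition~\ref{prop:endo_noether_artin}(1), $S$ is right endonoetherian.

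For the second assertion, Proposition~\ref{prop:endonoetherian-noetherian} applies directly to both rings. Since $R$ is a right $p$-Johns principal right ideal ring, it is right noetherian. Under the additional hypothesis, $S$ is also a right $p$-Johns principal right ideal ring, so the same proposition shows $S$ is right noetherian. One could alternatively transfer noetherianity from $R$ to $S$, since right noetherianity is Morita invariant, but the direct route is simpler.

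The only step needing care is the identification in hypothesis (2). The isomorphism $\Phi(\ell_a)$ must correspond to left multiplication by an element of $S$ under $\morp_S(P)\cong\morp_S(S_S)\cong S$. Moreover, as $a$ ranges over $R$, the elements $b$ obtained must range over all of $S$, so that every principal right annihilator of $S$ appears in $\mathcal{K}_P$. Surjectivity holds because $\Phi$ is a ring isomorphism onto $\morp_S(P)$, and conjugation by an isomorphism $P\cong S_S$ carries kernels to kernels. I expect this bookkeeping to be the only real obstacle; everything else is citation.
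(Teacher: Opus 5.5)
Your proof is correct and follows the paper's argument for the first assertion. Right endonoetherianity of $R$ is ACC on $\mathcal{A}_R$; Theorem~\ref{thm:morita-princ-ann-acc} transports this to $\mathcal{K}_P$, and hypothesis (2) identifies $\mathcal{K}_P$ (via conjugation by $P\cong S_S$) with the principal right annihilators of $S$, where your bookkeeping is exactly what is needed. For the second assertion, you apply Proposition~\ref{prop:endonoetherian-noetherian} directly to $R$ using hypothesis (1), whereas the paper applies it to $S$ and transfers back to $R$ by Morita invariance of right noetherianity; your route is slightly more economical, and your closing remark shows the extra hypothesis on $S$ is not actually needed (indeed (2) already forces $R\cong\morp_R(R_R)\cong\morp_S(S_S)\cong S$ as rings).
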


\begin{proof}
    By Theorem~\ref{thm:morita-princ-ann-acc}, assumption (2) implies that the Morita equivalence sends principal right annihilators of \(R\) bijectively onto those of \(S\). Since \(R\) is right \(p\)-Johns, its principal right ideals coincide with the family \(\{\rann_R(x)\mid x\in R\}\). Thus \(R\) satisfies ACC on principal right annihilators, so \(S\) also satisfies ACC on principal right annihilators. Hence \(S\) is right endonoetherian.

    Now assume \(S\) is also a principal right ideal ring and right \(p\)-Johns. Applying Proposition~\ref{prop:endonoetherian-noetherian} to \(S\), we obtain that \(S\) is right noetherian. By Morita invariance of right noetherianity and assumption (2), the same holds for \(R\).
\end{proof}

\begin{remark}
Corollary~\ref{cor:endonoetherian-morita} clarifies the obstruction exhibited by Shepherdson's example (Example~\ref{ex:shepherdson}). In that construction, $R$ is a domain, hence $\rann_R(a)=0$ for every nonzero $a\in R$. Since $R$ is not a division ring, it has nonzero proper principal right ideals, and none of these can be annihilators of elements of $R$. Thus $R$ is not right $p$-dual and therefore, not right $p$-Johns. Consequently, the standard Morita equivalence between $R$ and $\mathbb{M}_2(R)$ does not satisfy the hypotheses of Corollary~\ref{cor:endonoetherian-morita}. This explains why the endonoetherian property fails to pass to the matrix ring in that example. 

Indeed, Theorem~\ref{thm:morita-princ-ann-acc} shows that the Morita equivalence maps the family of principal right annihilators of $R$ bijectively onto the kernels of endomorphisms of the right $S$-module $P=e_{11}S$, where $S=\mathbb{M}_2(R)$. Since $S_S\cong e_{11}S\oplus e_{22}S$, we have $P\not\cong S_S$. Hence the transported family is not the family of principal right annihilators of $S$. The infinite chain in $S$ arises from endomorphisms of the regular module $S_S$, which have no counterpart in the transported family precisely because the hypothesis $P\cong S_S$ of Theorem~\ref{thm:morita-princ-ann-acc} fails. Thus the obstruction is exactly the failure of the regular-module correspondence, confirming the essentiality and hypotheses of the lemma.
\end{remark}

Finally, we note that the $p$-quasi-Frobenius rings, constructed in this paper, are endonoetherian but exhibit properties that are different from the  noetherian case. It is, therefore, natural to ask the following question.

\begin{question}\label{q:endonoetherian-modules-abelian}
    Let \(R\) be a right endonoetherian ring. Is the full subcategory of \(\mathrm{Mod}\text{-}R\) consisting of endonoetherian right \(R\)-modules abelian? If not, what additional closure conditions must be imposed to obtain an abelian subcategory analogous to \(\mathrm{mod}\text{-}R\) (\(\mathrm{fp}\text{-}R\)) over a right Noetherian (resp.\ right coherent) ring?
\end{question}

 Over a right noetherian ring, the abelianness of \(\mathrm{mod}\text{-}R\) is the key input in Krause's construction of the stable derived category of a noetherian scheme \cite{krause2005stable}, among others. If the corresponding category of endonoetherian modules is abelian, one could try to repeat these constructions for endonoetherian rings, obtaining stable derived categories for non-noetherian rings and potentially extending support-theoretic methods in the spirit of Zou \cite{zou2025support} (though, as that work indicates, such methods are unlikely to extend beyond the weakly noetherian setting, see \cite{zou2025supportreview}  for a brief overview). If the answer is negative, the failure is still informative: it would show where the analogy with the noetherian case breaks down and would indicate that exact categories, abelian envelopes, or a smaller subcategory defined by extra closure conditions are needed. Either way, the answer would clarify which homological constructions genuinely depend on Noetherianity and which survive when noetherianity is replaced by its endomorphism-theoretic counterpart.

\section*{Summary of the Theory}

For the convenience of the reader, we conclude by summarizing the relationships among the classical ring classes and their principal analogues established or discussed in this paper. Green arrows denote implications proved under the hypotheses stated in the paper, while dashed arrows indicate conjectural implications or known non-implications.

\newpage

\begin{center}
\begin{tikzpicture}[scale=1,transform shape,>=latex,box/.style={
draw, rounded corners, minimum width=1.9cm, minimum height=0.8cm, align=center, font=\small}]


\node[box] (QF) at (-6,-1.2) {QF};

\node[box] (SJ) at (-3,-1.2) {Strongly\\Right Johns};

\node[box] (J) at (0,-1.2) {Right\\Johns};

\node[box] (D) at (3,-1.2) {Right\\Dual};

\node[box,fill=green!12] (SP) at (6,-1.2) {Semiperfect};


\draw[->,thick] (QF)--(SJ);
\draw[->,thick] (SJ)--(J);
\draw[->,thick] (J)--(D);
\draw[->,thick] (D)--(SP); 

\draw[->,red,dashed,bend left=35] (SJ.south) to 
    node[below, red, align=center] {
        \fontsize{7}{3}\selectfont 
        Faith--Menal \\  
        \fontsize{7}{3}\selectfont  Conjecture
    } (QF.south);


\node[box,fill=blue!10] (pQF) at (-6,1.6) {{\color{blue}$p$}-QF};

\draw[->,blue,very thick]
(QF)--node[left]{\scriptsize generalizes}(pQF);

\node[box,fill=blue!10] (SpJ) at (-3,1.6) {Strongly\\Right {\color{blue}$p$}-Johns};

\node[box,fill=blue!10] (pJ) at (0,1.6) {Right\\{\color{blue}$p$}-Johns};

\node[box,fill=blue!10] (pD) at (3,1.6) {Right\\{\color{blue}$p$}-Dual};

\node[box,fill=green!12] (PI) at (6,1.6) {Left Principally\\Injective};


\draw[->,thick] (SpJ)--(pJ);
\draw[->,thick] (pJ)--(pD);
\draw[->,thick] (pD)--(PI); 

\draw[<->,purple,thick]
(D)--node[right]{\scriptsize independent}(pD);

\draw[<->,purple,thick]
(SP)--node[right]{\scriptsize independent}(PI); 

\draw[-{Implies}, double, double distance=1pt, red] (pQF) -- 
    node[midway, sloped] {\tikz[thick, red]\draw(-4.5,1.25) -- (-4.5,1.75);} 
    (SpJ);

\draw[{Implies}-{Implies}, double, double distance=1pt,red, bend right=30] (pQF) to 
    node[midway, sloped] {\tikz[thick, red]\draw(-0.1,-0.25) -- (0.1,0.25);} 
    (pD);

\draw[->,green!60!black,bend right=45] (pJ.north) to 
    node[below=0.1cm, blue, align=center] 
    {\fontsize{7}{3}\selectfont 
     \color{red}$p$-Faith--Menal= \\  
     \fontsize{7}{3}\selectfont  \color{red}Two-sided {\color{blue}$p$}-Johns} 
    ([xshift=0.3cm]pQF.north);


\node[box] (CF) at (6,4.4) {Right\\ CF};

\draw[->,thick] (CF)--(PI);

\draw[{Implies}-{Implies}, double, double distance=1pt, red, bend left=30] (pQF.north) to 
    node[midway, sloped] {\tikz[thick]\draw(-0.1,-0.25) -- (0.1,0.25);} 
    node[above=3pt]{} ([yshift=0.1cm]CF.west);

\draw[{Implies}-{Implies}, double, double distance=1pt, red] ([yshift=-0.1cm]CF.west) -- 
    node[midway, sloped] {\tikz[thick, red]\draw(-0.1,-0.25) -- (0.1,0.25);} 
    (pD);

\end{tikzpicture}
\end{center}

\bibliography{ref}
\end{document}